\documentclass[11pt,a4paper]{article}

\usepackage[utf8]{inputenc}
\usepackage[T1]{fontenc}
\usepackage{amsmath,amssymb,amsthm,mathtools}
\usepackage{geometry}
\usepackage{enumitem}
\usepackage{hyperref}
\hypersetup{
  hidelinks,
  pdftitle={Classification of compact Lagrangian self-similar submanifolds with Legendrian capillary boundary in the unit ball},
  pdfauthor={Dong Gao, Yong Luo, Hui Ma, Jiabin Yin},
  pdfkeywords={Lagrangian self-similar submanifold, Legendrian capillary boundary, Anciaux product, minimal Legendrian link, Calabi suspension}
}
\numberwithin{equation}{section}

\newtheorem{theorem}{Theorem}[section]
\newtheorem{lemma}[theorem]{Lemma}
\newtheorem{proposition}[theorem]{Proposition}
\newtheorem{corollary}[theorem]{Corollary}
\newtheorem{remark}[theorem]{Remark}

\newcommand{\C}{\mathbb C}
\newcommand{\R}{\mathbb R}
\newcommand{\B}{\mathbb B}
\newcommand{\Sph}{\mathbb S}
\newcommand{\Dcal}{\mathcal D}
\newcommand{\Lcal}{\mathcal L}
\newcommand{\ee}{\mathrm e}
\newcommand{\dist}{\operatorname{dist}}

\newcommand{\Id}{\operatorname{Id}}
\newcommand{\authorinfo}[3]{%
  \par\addvspace{0.6\baselineskip}%
  \noindent\textsc{#1}\par
  \noindent #2\par
  \noindent\textit{Email address:}
  \href{mailto:#3}{\texttt{#3}}\par
}

\title{Classification of compact Lagrangian self-similar submanifolds with Legendrian capillary boundary in the unit ball}
\author{Dong Gao, Yong Luo, Hui Ma, Jiabin Yin}
\date{}

\begin{document}

\maketitle

\begingroup
\renewcommand{\thefootnote}{}
\footnotetext[0]{\textit{2020 Mathematics Subject Classification.}
Primary 53C24, 53C42; Secondary 53D12, 53E10.}
\footnotetext[0]{\textit{Keywords and phrases.}
Lagrangian self-similar submanifold; Legendrian capillary boundary;
Anciaux product; minimal Legendrian link; Calabi suspension.}
\endgroup

\begin{abstract}
We classify smooth compact connected Lagrangian immersions $X$ in the closed
unit ball of $\C^n$, $n\ge2$, satisfying $H+\varepsilon X^\perp=0$,
$\varepsilon\in\{-1,0,1\}$, with Legendrian boundary on the unit sphere and
constant contact angle on each connected component.  We prove that the boundary has at
most two connected components.  When the boundary is connected, $X$ is a
diffeomorphism onto an equatorial Lagrangian $n$-disk.  When the boundary has
two components, $X$ splits globally as $X(s,p)=\gamma(s)\psi(p)$, where
$\psi$ is a compact minimal Legendrian immersion in the unit sphere and $\gamma$ is an Anciaux profile with a
unique radial minimum. The two contact angles are supplementary.  
In complex dimension two, every non-disk solution is a finite cover of a
Lagrangian catenoid segment for $\varepsilon=0$ or of a rotational Anciaux
annulus for $\varepsilon=\pm1$.  In higher complex dimensions, iterated Calabi
suspensions produce families whose minimal Legendrian links have nontrivial
topology.
\end{abstract}

\section{Introduction}

We study a global classification problem for compact Lagrangian
submanifolds with Legendrian capillary boundary in the unit ball.  Let
$
        X:M^n\longrightarrow\C^n
$
be a smooth Lagrangian immersion of a compact connected manifold with nonempty
boundary.  We consider Lagrangian self-similar submanifolds satisfying 
\begin{equation}\label{eq:intro-self-similar}
        H+\varepsilon X^\perp=0,
        \qquad \varepsilon\in\{-1,0,1\}.
\end{equation}
Here $H$ denotes the trace of the second fundamental form. Self-similar submanifolds arise naturally in mean curvature flow and play an important role in the analysis of singularities (see \cite{Huisken1990}). The values
$\varepsilon=0,1,-1$ correspond respectively to the minimal, shrinking, and
expanding cases.  Other nonzero coefficients are obtained by homothety, with a
corresponding change in the radius of the supporting sphere. 

When the ambient manifold is K\"ahler-Einstein, Smoczyk \cite{Smoczyk1996} proved that the Lagrangian condition is preserved by the mean curvature flow; the resulting evolution is known as Lagrangian mean curvature flow. Lagrangian self-shrinkers and self-expanders have since been studied extensively, both from the viewpoint of classification and through explicit constructions; see \cite{Anciaux2006, ArezzoSun2013, CastroLerma2014, ChauChenYuan2012, ChengHiroakiWei2022, Dingxin2014, JoyceLeeTsui2010, LeeWang2009,LeeWang2010,  LiWang2017, 
LotayNeves2013} and the references therein.

Motivated by the theory of capillary hypersurfaces, and in particular by the extensive study of free-boundary minimal hypersurfaces (see \cite{Li2020} for a survey), Li--Wang--Weng \cite{LiWangWeng2021} introduced the Legendrian capillary boundary condition for Lagrangian submanifolds in the unit ball of $\C^n$. Assume that $X(\partial M)\subset\Sph^{2n-1}$, and let $\nu$ be the outward
unit conormal of $\partial M$ in $M$.  
The boundary is Legendrian precisely when
\begin{equation}\label{eq:intro-capillary}
        \nu=\sin\theta\,X+\cos\theta\,JX
        \qquad\text{on }\partial M
\end{equation}
for an angle function $\theta:\partial M\to\mathbb R/2\pi\mathbb Z$.  The
boundary is called Legendrian capillary if $\theta$ is constant on each connected
component.  The free-boundary condition corresponds to $\theta=\pi/2$,
equivalently $\nu=X$. 

For minimal Lagrangian surfaces with Legendrian capillary boundary in $\overline{\B}^4$, Li--Wang--Weng
\cite{LiWangWeng2021} proved a Nitsche-type disk rigidity theorem.  Luo--Sun
\cite{LuoSun2021} established a Legendrian free-boundary rigidity theorem and classified
annulus-type minimal Lagrangian surfaces with Legendrian capillary boundary
as Lagrangian catenoids. 

More recently, Gao--Ma--Yao \cite{GaoMaYao2026} established equatorial-disk rigidity for Lagrangian  self-similar submanifolds in arbitrary dimension under the vanishing of the relative Liouville class and under several natural boundary hypotheses, including the free-boundary condition and connected Legendrian capillary boundary.  They also constructed compact non-disk examples with two Legendrian capillary boundary components.

Within the capillary setting considered here, these results give the
rigidity conclusion for connected boundary and show that the two-boundary
case is nonempty.  Two global classification questions remained: can the
boundary have more than two connected components, and must every
two-boundary solution be of Anciaux product form?  We answer both questions by
proving that the boundary has at most two connected components and that every
two-boundary solution is globally an Anciaux product determined by a planar
profile curve and a compact minimal Legendrian link in the unit sphere.

We first recall the Anciaux products appearing in the classification  \cite{Anciaux2006}. Let
$
        \psi:N^{n-1}\rightarrow\Sph^{2n-1}
$
be a compact minimal Legendrian immersion, and let
$\gamma:I\to\C^*:=\C\setminus\{0\}$ be a smooth regular planar curve.  Then
\begin{equation}\label{eq:intro-anciaux-product}
        F(s,p)=\gamma(s)\psi(p)
\end{equation}
defines a Lagrangian immersion.  For these product immersions, equation
\eqref{eq:intro-self-similar} reduces to an ordinary differential system
for $\gamma$.  When $\varepsilon=0$ and the link is a round Legendrian
sphere, this construction recovers the Lagrangian catenoid family
\cite{Anciaux2006,CastroUrbano1999,GaoMaYao2026,HarveyLawson1982}.  More generally, special Lagrangian cone theory provides compact minimal Legendrian links with nontrivial topology \cite{Haskins2004, HaskinsKapouleas2007}. Using such links in \eqref{eq:intro-anciaux-product} we obtain higher-dimensional capillary self-similar examples with nonspherical boundary topology.  

For the main results, fix $n\ge2$ and
$\varepsilon\in\{-1,0,1\}$, and let
$X:M^n\to\C^n$ be a smooth Lagrangian immersion of a compact connected
manifold with nonempty boundary satisfying
\eqref{eq:intro-self-similar}.  Assume that
\begin{equation}\label{eq:intro-ball}
        X(M)\subset\overline{\B}^{2n},
        \qquad
        X(\partial M)\subset\Sph^{2n-1},
\end{equation}
and that each boundary component is Legendrian capillary.

These hypotheses also imply the strict interior inclusion and fix the
contact-angle normalization.  Indeed, for $\rho=|X|^2/2$,
\[
        \Delta\rho
        =n+\langle X,H\rangle
        =n-\varepsilon|X^\perp|^2
        \ge n-1>0,
\]
where the last inequality uses $|X^\perp|\le |X|\le1$.  Since
$\rho\le1/2$ on $M$ and $\rho=1/2$ on $\partial M$, the strong maximum
principle and the Hopf boundary point lemma give
\[
        X(M^\circ)\subset\B^{2n},
        \qquad
        \sin\theta
        =\langle X,\nu\rangle
        =\partial_\nu\rho>0
        \quad\text{on }\partial M.
\]
Thus each contact angle has a unique representative in $(0,\pi)$, which
we use throughout. 

\begin{theorem}\label{thm:boundary-components}
Let
$X:M^n\to\C^n$ be a smooth Lagrangian immersion of a compact connected
manifold with nonempty boundary satisfying
\eqref{eq:intro-self-similar} and \eqref{eq:intro-ball}.
Assume that each boundary component is Legendrian capillary. Then \(\partial M\) has at most two connected components.
\end{theorem}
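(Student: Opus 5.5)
The plan is to prove more than is stated: the radial structure of $X$ is rigid enough to make $M$ either a disk or a cylinder $[a,b]\times N$. The bound on the number of boundary components then follows at once. I would work with the two natural tangent fields $X^T$ and $(JX)^T$ and with the Liouville form $\lambda=\langle JX,dX\rangle$ restricted to $M$. Since $X$ is Lagrangian, $\lambda$ is closed and $(JX)^T$ is its dual vector field.

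\textbf{Step 1 (a potential).} Write $H=J\nabla\beta$ locally, with $\beta$ the Lagrangian angle. Using $X^\perp=-J(JX)^T$, equation \eqref{eq:intro-self-similar} becomes $d\beta=\varepsilon\lambda$.
\begin{itemize}
\item If $\varepsilon\neq0$, then $\lambda$ has a primitive up to periods, and $(JX)^T$ is, up to the factor $\varepsilon$, the gradient of a (multivalued) function whose level behaviour is controlled.
\item If $\varepsilon=0$, then $\beta$ is constant and $M$ is special Lagrangian.
\end{itemize}
On each boundary component the capillary condition \eqref{eq:intro-capillary} gives $\langle JX,\nu\rangle=\cos\theta$ (constant) and $\langle X,\nu\rangle=\sin\theta>0$. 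The Legendrian condition also gives $\lambda|_{\partial M}=0$.

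\textbf{Step 2 (an integral identity).} I would derive a Minkowski/Pohozaev-type identity: apply the divergence theorem to vector fields such as $\rho\,(JX)^T$, or the $(JX)^T$-contraction of the cubic form $h(\cdot,\cdot,\cdot)=\langle \mathrm{II}(\cdot,\cdot),J\cdot\rangle$. Boundary terms are evaluated by \eqref{eq:intro-capillary}, and interior terms by the Codazzi equation together with \eqref{eq:intro-self-similar}. The target is an identity of the form
\[
\int_M \bigl|\,h-\text{(Anciaux-model part built from }X^T,(JX)^T)\bigr|^2\,\phi=0
\]
with $\phi\ge0$. This would force the cubic form to have the structure of an Anciaux product: the plane $\mathrm{span}\{X^T,(JX)^T\}$ is invariant, and its orthogonal complement is totally umbilic in the appropriate sense.

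\textbf{Step 3 (splitting and counting).} From the structure of $h$, I would conclude that the distribution orthogonal to $X^T,(JX)^T$ is integrable with leaves mapped into spheres $|X|=r$ by Legendrian minimal immersions, and that $|X|$ restricted to $M$ behaves like a function of one variable.
\begin{itemize}
\item If the interior contains the origin, then $X$ is an equatorial disk with connected boundary.
\item Otherwise $\rho$ has critical set a single level (the radial minimum of the profile). Then $M\cong[a,b]\times N$ with $N$ connected, because $M$ is connected, and the boundary components are exactly $\{a\}\times N$ and $\{b\}\times N$.
\end{itemize}
In either case $\partial M$ has at most two components.

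\textbf{Main obstacle.} The hard part is Step 2: choosing the test vector field so that every boundary term has a sign, or vanishes, using only $\theta$ constant per component, and so that the interior integrand is a perfect square. I would first try the field $\langle JX,X^T\rangle$-weighted $(JX)^T$ used in the free-boundary Nitsche-type arguments, and handle the sign of $\varepsilon$ separately via the bound $\Delta\rho\ge n-1$.

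If the identity cannot be closed globally, the fallback is a topological argument. Run the gradient flow of $\rho$, which is subharmonic, with the outward derivative $\sin\theta>0$ at the boundary. Then use the fact that $\lambda$ vanishes on $\partial M$, so $\int_{\partial M_i}\iota\lambda$ relations force any three boundary components to produce a contradiction with $\Delta\rho>0$ via the maximum principle on the sublevel sets.
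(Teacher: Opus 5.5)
There is a genuine gap: the step that is supposed to produce the rigidity is never supplied. Step~2 is the whole argument, and you only describe what an identity $\int_M|h-\text{model}|^2\phi=0$ would have to achieve. You give no vector field whose divergence yields that square with controlled boundary terms.

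The target is also mis-specified. For an Anciaux product $\gamma\psi$ the fields $X^\top$ and $(JX)^\top$ are \emph{collinear} ($X^\top=a e_0$, $(JX)^\top=b e_0$). So the relevant structure is the rank-one identity $d\rho\wedge\lambda=0$, not an invariant plane. Moreover, the cubic-form components along $e_0^\perp$ are the cubic form of an arbitrary minimal Legendrian link, so any square must constrain only the $e_0$-components, i.e.\ force $|d\rho\wedge\lambda|^2=0$.

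Your fallback cannot work either. The integral information your ingredients give is the flux identity $\sum_i\cos\theta_i\,|\Sigma_i|=0$. It follows from $\operatorname{div}\bigl(\ee^{-f_\varepsilon}(JX)^\top\bigr)=0$ (the identity $\delta_{f_\varepsilon}\lambda=0$) and $\langle JX,\nu\rangle=\cos\theta$. In addition, a function with $\Delta\rho>0$ inside, $\rho=\tfrac12$ on $\partial M$ and $\partial_\nu\rho>0$ exists on every compact Riemannian manifold with boundary (solve $\Delta\rho=1$). Both facts are compatible with any number of boundary components, so they cannot rule out three.

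The idea you are missing is that the boundary conditions fix full Cauchy data along a single component, and these data propagate by unique continuation rather than by integration. Take a non-free component $\Sigma_0$, i.e.\ $\theta_0\neq\pi/2$. Differentiate the capillary identity tangentially and use cubic symmetry together with $H=\varepsilon\cos\theta\,J\nu$. This shows that $X|_{\Sigma_0}$ is minimal Legendrian in $\Sph^{2n-1}$. Hence the product $\gamma\psi$ with $\psi=X|_{\Sigma_0}$, $\gamma(0)=1$ and $\gamma'(0)=-(\sin\theta_0+i\cos\theta_0)$ solves the same equation, with the same position and inward conormal along $\Sigma_0$. One-sided Cauchy uniqueness then gives $X=\gamma\psi$ on a collar, so $d\rho\wedge\lambda=0$ there, and real analyticity extends this to all of $M$.

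Only then does a splitting follow, and your Step~3 still skips two necessary inputs. The first is global nondegeneracy of the line field. This comes from the first integral $E=-\ee^{-\varepsilon/2}\cos\theta_0\neq0$, which gives $|(JX)^\top|\ge c_\varepsilon|E|>0$ through an open--closed argument. The second is the radial ODE analysis: a unique turning point and a finite return time. That analysis lets the flow of $e_0$ carry $\Sigma_0$ diffeomorphically onto exactly one other boundary component.

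Finally, the correct dichotomy is free versus non-free contact angle on a chosen component, not ``origin in the interior'', which you never justify. The free case needs a separate argument; the paper cites Gao--Ma--Yao for it.
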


\begin{theorem}\label{thm:classification}
Under the assumptions of Theorem~\ref{thm:boundary-components}, exactly one of the following occurs.

\begin{enumerate}[label=\rm(\roman*)]
\item The boundary is connected, $M$ is diffeomorphic to the closed
$n$-ball, and $X$ is a diffeomorphism onto an equatorial Lagrangian $n$-disk.

\item The boundary has two connected components.  There exist a closed
connected $(n-1)$-manifold $N$, a compact minimal Legendrian immersion
$\psi:N\to\Sph^{2n-1}$, an arclength-parametrized curve
$\gamma=r\ee^{i\phi}:[0,L]\to\C^*$ with $L>0$, and a diffeomorphism
$\Phi:[0,L]\times N\to M$ such that
\begin{equation}
        X\circ\Phi(s,p)=\gamma(s)\psi(p).
\end{equation}

Moreover, writing $\gamma'=\ee^{i\alpha}$ and
$\Theta=\alpha-\phi$, the profile curve satisfies
\begin{equation}\label{eq:intro-anciaux-system}
        r'=\cos\Theta,
        \qquad
        \Theta'=\left(\varepsilon r-\frac nr\right)\sin\Theta,
\end{equation}
with the nonzero first integral
\begin{equation}
        r^n\ee^{-\varepsilon r^2/2}\sin\Theta=E,
        \qquad E\in\mathbb R\setminus\{0\}.
\end{equation}
The radial function satisfies
\begin{equation}
        r(0)=r(L)=1,
        \qquad
        0<r(s)<1\quad(0<s<L),
\end{equation}
and attains its minimum $r_*\in(0,1)$ at a unique point of $(0,L)$.
The two boundary contact angles are supplementary:
\begin{equation}
        \theta_0+\theta_1=\pi.
\end{equation}
\end{enumerate}
\end{theorem}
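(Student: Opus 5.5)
In case~(i) and case~(ii) the two halves of the dichotomy come from Theorem~\ref{thm:boundary-components}: $\partial M$ has one or two components. When $\partial M$ is connected, I reduce to the connected-capillary rigidity of Gao--Ma--Yao \cite{GaoMaYao2026}, whose hypotheses I have to check. On a Lagrangian the Liouville form $\lambda=\langle JX,dX\rangle$ is closed. Its restriction to the Legendrian boundary vanishes. So its relative class in $H^1(M,\partial M)$ vanishes exactly when its periods over absolute $1$-cycles vanish, and I have not yet checked that this holds automatically when $\partial M$ is connected. This is a genuine gap in the disk case: I will either show the periods vanish from the equation, or check whether the cited disk rigidity needs this hypothesis at all. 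Granting it, \cite{GaoMaYao2026} gives that $X$ is a diffeomorphism onto an equatorial Lagrangian disk.

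When $\partial M=\Sigma_0\sqcup\Sigma_1$, the main work is to produce the splitting. I use the Lagrangian angle $\beta$, which satisfies $H=J\nabla\beta$. Equation \eqref{eq:intro-self-similar} then gives $\nabla\beta=\varepsilon (JX)^T$, so $\beta-\varepsilon u$ is constant, where $u$ is a local primitive of $\lambda$ with $du=\lambda=\langle (JX)^T,\cdot\rangle$. The model to aim for is the product \eqref{eq:intro-anciaux-product}. There $X^T$ and $(JX)^T$ both lie along $\partial_s$, and $u$, $\rho=|X|^2/2$ and $\beta$ depend on $s$ only. So the target identity is
\[
        X^T\wedge (JX)^T\equiv0 \quad\text{on } M,
\]
together with $X^T\neq0$ away from the radial-minimum slice.

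To prove this identity, I first compute the traces $\operatorname{div}X^T=n-\varepsilon|X^\perp|^2$ and $\operatorname{div}(JX)^T=\varepsilon\langle X^\perp,J(JX)^T\rangle$; the latter expression still has to be simplified using the Lagrangian condition. I then look for a nonnegative quantity that vanishes exactly on products. Two candidates: $W=|X^T|^2|(JX)^T|^2-\langle X^T,(JX)^T\rangle^2$, or the symmetric tensor $\nabla^2\rho-\sigma\,g$ minus its radial part. I would integrate a Bochner/Reilly-type identity for $W$ over $M$. The boundary terms are controlled by \eqref{eq:intro-capillary}: along $\Sigma_j$ we have $X^T=\sin\theta_j\,\nu$ and $(JX)^T=\cos\theta_j\,\nu$ up to tangential parts, and the tangential parts vanish because $\Sigma_j$ is Legendrian and $\lambda|_{\Sigma_j}=0$. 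Hence $W=0$ on $\partial M$. Constancy of $\theta_j$ should kill the remaining flux terms. \textbf{I expect this to be the main obstacle}: finding the right integrand, showing that its interior part has a sign using $\varepsilon\in\{-1,0,1\}$ and $|X|\le1$, and ruling out interior critical points of $\rho$ other than a single minimal slice.

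Once the identity holds, the rest follows. The distribution $\operatorname{span}\{X^T\}$ integrates to curves lying in complex lines $\C\,\psi(p)$. The level sets of $\rho$ near $\partial M$ are Legendrian after radial rescaling, and minimality of the resulting link follows from \eqref{eq:intro-self-similar} restricted to the transverse directions. Flowing from $\Sigma_0$ along the normalized $X^T$ gives the diffeomorphism $\Phi:[0,L]\times N\to M$ and the form $X\circ\Phi=\gamma\psi$; this also fixes $N=\Sigma_0$. Plugging the product into \eqref{eq:intro-self-similar} gives \eqref{eq:intro-anciaux-system}. Differentiating shows that $r^n\ee^{-\varepsilon r^2/2}\sin\Theta$ is conserved, so it equals a constant $E$.

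The constant $E$ is nonzero because $\sin\Theta=\sin\theta_0>0$ at $s=0$. Then $\sin\Theta$ never vanishes, and
\[
        r''=-\Big(\varepsilon r-\frac nr\Big)\sin^2\Theta>0
\]
since $\varepsilon r\le1<n/r$. Hence $r$ is strictly convex, and with $r(0)=r(L)=1$ and the interior inclusion it has a unique interior minimum $r_*\in(0,1)$. Finally, the conormal is $\nu=-\gamma'\psi$ at $s=0$ and $\nu=\gamma'\psi$ at $s=L$. Comparing with \eqref{eq:intro-capillary} gives $\theta_0=\pi-\Theta(0)$ and $\theta_1=\Theta(L)$. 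The reflection symmetry of the profile about its minimum, which follows from the first integral, gives $\Theta(L)=\Theta(0)$ modulo $\pi$, and hence $\theta_0+\theta_1=\pi$.
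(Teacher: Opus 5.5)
The decisive step of alternative (ii) is the identity $X^\top\wedge(JX)^\top\equiv0$ on $M$, and your proposal does not prove it. You list two candidate integrands but give no Bochner/Reilly identity, no sign for its interior term, and no way to exclude further critical sets of $\rho$. It is not clear that any such integral identity exists.

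The paper obtains the identity by a different mechanism, which your plan is missing:
\begin{enumerate}
\item Constant contact angle and the total symmetry of the cubic form give $B(U,\nu)=\cos\theta\,JU$ and $H=\varepsilon\cos\theta\,J\nu$ along a boundary component. These force $\psi=X|_{\Sigma_0}$ to be minimal Legendrian in $\Sph^{2n-1}$ (Theorem~\ref{thm:boundary-minimal}).
\item Hence the Anciaux product $\gamma(s)\psi(p)$ is a solution with the same position and inward conormal as $X$ along $\Sigma_0$. Here $\gamma$ solves the profile ODE with $\gamma(0)=1$ and $\gamma'(0)=-(\sin\theta_0+i\cos\theta_0)$.
\item One-sided Cauchy uniqueness (Lemma~\ref{lem:cauchy}) identifies $X$ with this product on a collar, where $d\rho\wedge\lambda=0$.
\item Real analyticity of solutions (Lemma~\ref{lem:analyticity}) propagates $d\rho\wedge\lambda=0$ to all of $M$ (Proposition~\ref{prop:global-rank-one}).
\end{enumerate}
The splitting then follows intrinsically from $d\eta=0$, where $\eta$ is the form dual to the unit line field. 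This local uniqueness plus analytic continuation is what replaces your integral estimate.

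Several downstream steps are also wrong.

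\emph{The sign of $E$.} Use your own boundary formula $(JX)^\top=\cos\theta_0\,\nu$, together with $\lambda(\partial_s)=r\sin\Theta$ and $\nu=-\partial_s$ at $s=0$. This gives $\sin\Theta(0)=-\cos\theta_0$ and $\cos\Theta(0)=-\sin\theta_0$. So $E=-\ee^{-\varepsilon/2}\cos\theta_0$, which vanishes exactly for a free-boundary component, and your claim $\sin\Theta(0)=\sin\theta_0>0$ is false. You must therefore rule out $\theta_j=\pi/2$ when $\partial M$ has two components. The paper does this with the free-boundary rigidity of \cite{GaoMaYao2026}, whose conclusion (the equatorial disk) has connected boundary. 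The condition $E\neq0$ is exactly what bounds $|(JX)^\top|$ below by a positive multiple of $|E|$ on all of $M$. That bound gives $X\neq0$ and a global nonvanishing line field (Proposition~\ref{prop:global-nondegeneracy}).

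\emph{The choice of flow.} You cannot flow along the normalized $X^\top=\nabla\rho$. It points outward at $\Sigma_0$, vanishes on the entire minimum slice, and reverses direction there, so its flow never crosses from $\Sigma_0$ to $\Sigma_1$. The paper flows along $\operatorname{sgn}(E)\,(JX)^\top/|(JX)^\top|$ instead.

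\emph{The contact angles.} From \eqref{eq:intro-capillary} one gets $\theta_0\equiv\tfrac{3\pi}{2}-\Theta(0)$ and $\theta_1\equiv\tfrac{\pi}{2}-\Theta(L)$. The reflection symmetry gives $\Theta(0)+\Theta(L)\equiv\pi \pmod{2\pi}$, not $\Theta(L)\equiv\Theta(0)\pmod\pi$. Your two errors happen to cancel. The direct argument is that the first integral at $r=1$ gives $\sin\Theta(0)=\sin\Theta(L)$, that is, $\cos\theta_0=-\cos\theta_1$.

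Your convexity argument $r''>0$ for the unique minimum is fine once $E\neq0$ and the global product are known.

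\emph{Case (i).} The Liouville-class worry is avoidable. The paper cites the connected-boundary rigidity of \cite{GaoMaYao2026} directly. Alternatively, a non-free component forces $M\cong[0,L]\times\Sigma_0$, which has two boundary components. So a connected boundary forces $\theta=\pi/2$, and free-boundary rigidity applies.
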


The first integral also determines $|\cos\theta_i|$ from the unique
minimum radius; the precise relation is recorded in
Corollary~\ref{cor:angle-formula}.

In dimension two, $N\cong S^1$, so the non-disk case is an annulus.

\begin{corollary}
\label{cor:two-dimensional}
Let $n=2$.  Every non-disk solution is, after a unitary transformation, of
the form
\begin{equation}\label{eq:intro-two-dimensional}
        X(s,t)=\gamma(s)(\cos mt,\sin mt),
        \qquad
        (s,t)\in[0,L]\times\R/(2\pi\mathbb Z),
\end{equation}
for an integer $m\ge1$.  For $\varepsilon=0$ these are finite covers of
Lagrangian catenoid segments; for $\varepsilon=\pm1$ they are finite covers of
the corresponding rotational Anciaux annuli.
\end{corollary}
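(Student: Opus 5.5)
The corollary follows from Theorem~\ref{thm:classification}(ii) once we classify compact minimal Legendrian curves in $\Sph^3$. Consider a non-disk solution. By Theorem~\ref{thm:boundary-components} and Theorem~\ref{thm:classification}, the boundary has two components, and $X\circ\Phi(s,p)=\gamma(s)\psi(p)$, where $\psi:N\to\Sph^3$ is a compact minimal Legendrian immersion and $N$ is a closed connected $1$-manifold, hence $N\cong S^1$. The first step is to show that a minimal Legendrian curve in $\Sph^3$ is a great circle. Parametrize $\psi$ by arclength $t$. The Legendrian condition gives $\langle\psi',J\psi\rangle=0$. The frame $\{\psi,J\psi,\psi',J\psi'\}$ is then orthonormal in $\C^2$.

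Next we compute $\psi''$ in this frame. Differentiating $|\psi'|^2=1$ gives $\langle\psi'',\psi'\rangle=0$. Differentiating $\langle\psi,\psi'\rangle=0$ gives $\langle\psi'',\psi\rangle=-1$. Differentiating $\langle\psi',J\psi\rangle=0$ gives $\langle\psi'',J\psi\rangle=-\langle\psi',J\psi'\rangle=0$. So $\psi''=-\psi+\kappa J\psi'$, where $\kappa$ is the curvature of $\psi$ in $\Sph^3$. Minimality of a curve means it is a geodesic, so $\kappa=0$ and $\psi''=-\psi$. Hence $\psi(t)=\cos t\,e_1+\sin t\,e_2$ with $e_1=\psi(0)$, $e_2=\psi'(0)$. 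These vectors are orthonormal, and the Legendrian condition at $t=0$ gives $\langle e_2,Je_1\rangle=0$, so $\{e_1,e_2\}$ is a unitary basis of $\C^2$. After a unitary transformation we may take $e_1,e_2$ to be the standard basis. Then $\psi$ traverses the real great circle $\{(\cos t,\sin t)\}$ with unit speed.

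Closedness of $N$ gives the covering. The map $\psi:S^1\to\Sph^3$ is an immersion whose image is this circle, and a unit-speed immersion of a circle onto a circle is a covering. Its degree is some integer $m\ge1$; reversing the orientation of $N$ if necessary makes it positive. Reparametrizing $N=\R/2\pi\mathbb Z$ with $p=t$ gives $\psi(t)=(\cos mt,\sin mt)$, which is \eqref{eq:intro-two-dimensional}.

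It remains to identify the profile curves. The curve $\gamma$ satisfies \eqref{eq:intro-anciaux-system} with $n=2$. For $\varepsilon=0$, the first integral $r^2\sin\Theta=E$ is the defining ODE of the Lagrangian catenoid profile (see \cite{CastroUrbano1999,Anciaux2006}). Hence for $m=1$ the image is a segment of a Lagrangian catenoid, and for general $m$ it is an $m$-fold cover of that segment. For $\varepsilon=\pm1$ the same reasoning identifies the images with the rotational Anciaux annuli of \cite{Anciaux2006} built on the equatorial Legendrian circle. The main point to verify, which is only minor, is that these are exactly the profile curves singled out there. Since Theorem~\ref{thm:classification} already gives the ODE and the value of $E$, this is a matter of matching definitions, not a new argument.
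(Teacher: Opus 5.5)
Your proposal is correct and follows essentially the same route as the paper. You invoke alternative (ii), use $N\cong S^1$ and minimality of the link to obtain an $m$-fold cover of a great circle, normalize by a unitary map, and identify the profiles through the Anciaux system. Your frame computation $\psi''=-\psi+\kappa J\psi'$ is a genuine improvement: it makes explicit that the great circle is Legendrian (spanned by a Hermitian-orthonormal pair) and hence unitarily equivalent to $(\cos t,\sin t)$, a point the paper leaves implicit.

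The only place you defer to the literature where the paper does not is $\varepsilon=0$, and the paper settles it in a few lines. From $\alpha'=-\sin\Theta/r$ and $\phi'=\sin\Theta/r$, the sum $\alpha+\phi$ is constant, so after a phase rotation $\Theta=-2\phi$. Then $E=r^2\sin\Theta=-\operatorname{Im}(\gamma^2)$, which is the Harvey--Lawson hyperbola $2xy=-E$. You should include this short computation rather than call the identification a matter of matching definitions.
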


The rigidity of the Legendrian link in
Corollary~\ref{cor:two-dimensional} is specific to dimension two: it is a
possibly multiply covered great circle.  In higher dimensions, compact
minimal Legendrian links may have nontrivial topology.  The next result
shows that Calabi suspension propagates such topology and that the resulting
links occur in compact capillary self-similar solutions.

\begin{theorem}
\label{thm:intro-calabi-realization}
Let $k_0\ge2$, and let
$\psi_{k_0}:N^{k_0-1}\to\Sph^{2k_0-1}$ be a compact connected minimal
Legendrian immersion.  For every $n\ge k_0$, iterated Calabi suspension
yields a compact minimal Legendrian immersion
\[
        \psi_n:\mathbb T^{n-k_0}\times N^{k_0-1}
        \longrightarrow\Sph^{2n-1}.
\]
For each $\varepsilon\in\{-1,0,1\}$, combining $\psi_n$ with the Anciaux
profiles produces a one-parameter family of compact immersed Lagrangian
solutions of $H+\varepsilon X^\perp=0$ in $\overline{\B}^{2n}$.  Each member
is defined on
$[0,L]\times\mathbb T^{n-k_0}\times N^{k_0-1}$ for some $L>0$, its interior
lies in $\B^{2n}$, and it has exactly two Legendrian capillary boundary
components, both diffeomorphic to
$\mathbb T^{n-k_0}\times N^{k_0-1}$, with supplementary contact angles.
\end{theorem}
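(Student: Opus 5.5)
We would prove Theorem~\ref{thm:intro-calabi-realization} in two independent steps: first the construction of the links $\psi_n$ by Calabi suspension, then the construction of a capillary profile $\gamma$ from the ODE~\eqref{eq:intro-anciaux-system}, combined through the Anciaux product~\eqref{eq:intro-anciaux-product}.

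\textbf{Step 1: the links.} For a minimal Legendrian immersion $\psi:N^{k-1}\to\Sph^{2k-1}$ we use the Calabi suspension
\[
        \widehat\psi(t,p)=\Bigl(\sqrt{\tfrac{k}{k+1}}\,\ee^{it/k}\psi(p),\;\sqrt{\tfrac{1}{k+1}}\,\ee^{-it}\Bigr)\in\Sph^{2k+1},
        \qquad (t,p)\in\R\times N.
\]
We verify three things directly. First, the Legendrian condition $\langle d\widehat\psi,J\widehat\psi\rangle=0$: the $\partial_t$ component gives $\tfrac{k}{k+1}\cdot\tfrac1k-\tfrac1{k+1}=0$, and the $N$ directions vanish because $\psi$ is Legendrian. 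Second, the immersion property. Third, minimality. For minimality we compute the mean curvature of $\widehat\psi$ in the sphere, using $\Delta_N\psi=-(k-1)\psi$ and $\psi$ minimal Legendrian; the weights $\sqrt{k/(k+1)}$ and $\sqrt{1/(k+1)}$ are exactly those making $\Delta\widehat\psi=-k\widehat\psi$ for the induced metric. Equivalently, we check that the cone over $\widehat\psi$ is special Lagrangian, since the Lagrangian angle of $\C\cdot$cone is constant.

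Periodicity in $t$ must be handled. The map is $2\pi k$-periodic, and it may have a smaller period after composing with the monodromy $\ee^{2\pi i/k}$ acting on $\psi$. We define the domain as $\R/(2\pi k\mathbb Z)\times N$, which is a circle factor. This gives an immersion of $S^1\times N$. Iterating $n-k_0$ times from $k=k_0$ yields $\psi_n:\mathbb T^{n-k_0}\times N^{k_0-1}\to\Sph^{2n-1}$, which is compact, connected, minimal and Legendrian.

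\textbf{Step 2: the profiles.} Fix $\varepsilon$ and take the system~\eqref{eq:intro-anciaux-system} with first integral $r^n\ee^{-\varepsilon r^2/2}\sin\Theta=E$. The function $f(r)=r^n\ee^{-\varepsilon r^2/2}$ is increasing on $(0,1]$ for every $\varepsilon\in\{-1,0,1\}$, since $f'/f=n/r-\varepsilon r>0$ when $n\ge2$. We choose $E\in(0,f(1))$ and let $r_*\in(0,1)$ be defined by $f(r_*)=E$. We then start at $r=r_*$, $\Theta=\pi/2$. Along the trajectory $|\sin\Theta|=E/f(r)\le1$ forces $r\ge r_*$. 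Since $\Theta'$ has sign $-\sin\Theta<0$ near the start, $r$ increases in both time directions. The solution is symmetric under $s\mapsto-s$, $\Theta\mapsto\pi-\Theta$, and $r'=\cos\Theta\neq0$ away from the minimum.

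So $r$ increases monotonically to $1$ in finite time in each direction: there $|\cos\Theta|=\sqrt{1-E^2/f(r)^2}$ is bounded below once $r$ exceeds $r_*$. Restricting to the interval between the two times where $r=1$ gives $\gamma$ on $[0,L]$ with $r<1$ in the interior and a unique minimum.

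\textbf{Step 3: combining.} By Anciaux's reduction, $X=\gamma\psi_n$ solves $H+\varepsilon X^\perp=0$, which is exactly what~\eqref{eq:intro-anciaux-system} encodes. The interior lies in $\B^{2n}$ because $r<1$ there. The boundary components are $\gamma(0)\psi_n$ and $\gamma(L)\psi_n$, each Legendrian and diffeomorphic to the link domain. The conormal is $\pm\gamma'\psi_n$, which yields the constant angles $\theta$ with $\ee^{i\Theta}$ related to $\sin\theta+i\cos\theta$. The reflection symmetry of the profile then gives $\theta_0+\theta_1=\pi$. Varying $E\in(0,f(1))$ gives the one-parameter family.

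\textbf{Main obstacle.} We expect the main difficulty to be the minimality verification and the correct global period in Step~1. The rest is ODE phase-plane analysis already packaged in~\eqref{eq:intro-anciaux-system} and its first integral.
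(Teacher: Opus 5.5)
Your proposal is correct and follows the paper's proof in all essentials. Your suspension is the paper's $\mathcal C_k(\psi)$ after the substitution $t\mapsto t/k$, which is why you get period $2\pi k$. Your profile is built exactly as in Lemma~\ref{lem:existence-anciaux-segments}, by starting at $(r_*,\pi/2)$ and reflecting. Your combination step is Propositions~\ref{prop:anciaux-product} and~\ref{prop:product-boundary-angles}.

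There are two local differences. First, you verify minimality through Takahashi's criterion $\Delta\widehat\psi=-k\widehat\psi$ for the induced product metric $\tfrac1k\,dt^2+\tfrac{k}{k+1}g_\psi$, and this computation does check out; the paper instead shows that the cone has constant phase. Second, you obtain $\theta_0+\theta_1=\pi$ from the reflection symmetry $\Theta(0)=\pi-\Theta(L)$, whereas the paper uses the first integral to get $\sin\Theta(0)=\sin\Theta(L)$ at $r=1$.
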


The proof separates the free and non-free branches.  The free-boundary
case follows from \cite[Theorem~1.3]{GaoMaYao2026}, and it remains to
consider a non-free boundary component $\Sigma_0$.  Boundary identities
show that the boundary immersion $X|_{\Sigma_0}$ is minimal Legendrian.
The boundary immersion and its contact angle determine an Anciaux model
with the same Cauchy data, and one-sided Cauchy uniqueness identifies $X$
with this model on a collar.  There, $d\rho$ and the pulled-back Liouville
form $\lambda$ are collinear, and real analyticity extends this rank-one
identity throughout $M^\circ$.  The identity yields an intrinsic splitting
and the profile system without any symmetry or product ansatz.  The radial
dynamics has a unique turning point and returns to the unit sphere in finite
time.  Flowing $\Sigma_0$ along the intrinsic line field then exhausts $M$
and yields the global product, the boundary count, and the classification.

\vspace{0.2cm}

The paper is organized as follows.
Section~\ref{sec:preliminaries} develops the boundary geometry,
analyticity, and one-sided Cauchy uniqueness.
Section~\ref{sec:rank-one} constructs the Anciaux collar and derives the
global rank-one identity and intrinsic splitting.
Section~\ref{sec:global-classification} analyzes the radial dynamics and
intrinsic flow to prove Theorems~\ref{thm:boundary-components}
and~\ref{thm:classification}.
Section~\ref{sec:calabi-realizations} establishes the two-dimensional
classification and the angle estimate before constructing the Calabi-suspended families.

\section{Preliminaries and boundary geometry}\label{sec:preliminaries}

Throughout the paper, $D$ denotes the Euclidean connection of $\C^n$, $\nabla$ the Levi-Civita connection of the induced metric on $M$, and $B$ the second fundamental form.  Thus
$$
        D_UV=\nabla_UV+B(U,V)
$$
for tangent vector fields $U,V$ on $M$.  The mean-curvature vector is
$$
        H=\sum_{j=1}^n B(e_j,e_j)
$$
for any local orthonormal tangent frame $\{e_j\}_{j=1}^n$.

\subsection{Basic Lagrangian identities}

We use the standard K\"ahler form
$$
        \omega(U,V)=\langle JU,V\rangle
$$
and the Liouville form
\begin{equation}\label{eq:ambient-liouville}
        \lambda_{\C^n}
        =\sum_{j=1}^n(x_j\,dy_j-y_j\,dx_j),
        \qquad
        d\lambda_{\C^n}=2\omega.
\end{equation}
For a Lagrangian immersion $X:M\to\C^n$, we write
\begin{equation}
        \lambda=X^*\lambda_{\C^n}.
\end{equation}
With the normalization \eqref{eq:ambient-liouville},
\begin{equation}\label{eq:lambda-evaluation}
        \lambda(Y)=\langle JX,Y\rangle
\end{equation}
for every tangent vector $Y$.  Since $X^*\omega=0$,
\begin{equation*}
        d\lambda=0.
\end{equation*}
If $X(\partial M)\subset\Sph^{2n-1}$ is Legendrian, then
\begin{equation}\label{eq:lambda-boundary-zero}
        \lambda|_{T\partial M}=0.
\end{equation}
Indeed, the standard contact form on the unit sphere is the restriction of $\lambda_{\C^n}$, up to the harmless normalization factor used in different conventions.

Since $\lambda|_{T\partial M}=0$, the Liouville form defines a relative
de Rham class \cite{GaoMaYao2026}
\[
        \mathcal L_{\mathrm{rel}}(X)
        :=[\lambda]_{\mathrm{rel}}
        \in H^1_{\mathrm{dR}}(M,\partial M;\mathbb R).
\]

We recall the standard total symmetry of the Lagrangian cubic form.

\begin{lemma}
Let $X:M^n\to\C^n$ be a Lagrangian immersion.  Define
\begin{equation}
        C(U,V,W)=\langle B(U,V),JW\rangle.
\end{equation}
Then $C$ is symmetric in all three variables.
\end{lemma}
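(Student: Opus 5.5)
The plan is to prove symmetry in the first two slots and in the last two slots separately. Together these generate the full symmetric group on three letters.

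\emph{Symmetry in $(U,V)$.} This is immediate: $B(U,V)$ is the normal part of $D_UV$, and since $D$ is torsion-free and $[U,V]$ is tangent, $B(U,V)=B(V,U)$. Hence $C(U,V,W)=C(V,U,W)$.

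\emph{Symmetry in $(V,W)$.} Here the Lagrangian condition enters. Since $X^*\omega=0$, we have $\langle JV,W\rangle=0$ for all tangent $V,W$, so $J$ maps $TM$ into the normal bundle. Extend $V,W$ to local tangent vector fields and differentiate $\langle JV,W\rangle=0$ in the direction $U$. Because $D$ is the flat Euclidean connection and $J$ is parallel, so that $D_U(JV)=JD_UV$, we get
\[
0=\langle JD_UV,W\rangle+\langle JV,D_UW\rangle .
\]
Now decompose $D_UV=\nabla_UV+B(U,V)$ and likewise for $D_UW$. The tangential terms vanish: $\langle J\nabla_UV,W\rangle=0$ and $\langle JV,\nabla_UW\rangle=0$, because $J$ of a tangent vector is normal. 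This leaves
\[
\langle JB(U,V),W\rangle+\langle JV,B(U,W)\rangle=0 .
\]
Using that $J$ is orthogonal and $J^2=-1$, we have $\langle JB(U,V),W\rangle=-\langle B(U,V),JW\rangle$. The identity therefore becomes $C(U,W,V)=C(U,V,W)$, which is the desired symmetry in the last two slots.

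Combining the two transpositions $(12)$ and $(23)$ gives full symmetry of $C$. There is no real obstacle. The only point needing care is the sign bookkeeping with $J$, together with the observation that $C$ is tensorial, so the choice of extension of $V,W$ to vector fields is irrelevant.
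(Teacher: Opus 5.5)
Your proposal is correct and follows the paper's proof. In both, symmetry of $B$ handles the first two slots. Symmetry in the last two slots comes from differentiating the Lagrangian identity $\langle JV,W\rangle=0$ using $DJ=0$ and the Gauss formula. Your sign bookkeeping with $J$ (including $\langle JB(U,V),W\rangle=-\langle B(U,V),JW\rangle$) is right.
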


\begin{proof}
The symmetry in the first two variables follows from the symmetry of $B$.
Differentiating $\langle JU,V\rangle=0$ in the direction of $W$ and using
$DJ=0$ and the Gauss formula gives
\[
        \langle B(W,U),JV\rangle
        =
        \langle B(W,V),JU\rangle.
\]
Thus $C(W,U,V)=C(W,V,U)$.  Together with the symmetry in the first two
variables, this proves that $C$ is totally symmetric.
\end{proof}

The following identities relate the self-similar equation to the Liouville form.  

\begin{lemma}[\cite{GaoMaYao2026}]
Let $X:M^n\to\C^n$ be a Lagrangian immersion satisfying \eqref{eq:intro-self-similar}.  Define
$$
        \alpha_H(Y)=\langle JH,Y\rangle,
        \qquad
        f_\varepsilon=\frac{\varepsilon|X|^2}{2}.
$$
Then
\begin{equation}\label{eq:alphaH-lambda}
        \alpha_H=-\varepsilon\lambda
\end{equation}
and
\begin{equation}
        \delta_{f_\varepsilon}\lambda=0,
        \qquad
        where \ \ \delta_{f_\varepsilon}\eta:=\delta\eta+\eta(\nabla f_\varepsilon).
\end{equation}
\end{lemma}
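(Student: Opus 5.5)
The plan is to verify both identities pointwise, using only the Lagrangian condition $J(TM)=T^\perp M$ and the self-similar equation \eqref{eq:intro-self-similar}. I use the convention $\delta\eta=-\operatorname{div}\eta^\sharp$, which is the one that makes the stated identity hold with the given sign of $\eta(\nabla f_\varepsilon)$.

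\emph{First identity.} Fix a tangent vector $Y$. Then
\[
\alpha_H(Y)=\langle JH,Y\rangle=-\varepsilon\langle JX^\perp,Y\rangle .
\]
Decompose $X=X^T+X^\perp$. Since $X$ is Lagrangian, $JX^T$ is normal, so $\langle JX^T,Y\rangle=0$. Hence
\[
\langle JX^\perp,Y\rangle=\langle JX,Y\rangle=\lambda(Y)
\]
by \eqref{eq:lambda-evaluation}. This gives \eqref{eq:alphaH-lambda}.

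\emph{Second identity.} By \eqref{eq:lambda-evaluation}, the vector field metrically dual to $\lambda$ is $V:=(JX)^T=JX^\perp$. I compute $\operatorname{div}V=\sum_i\langle D_{e_i}V,e_i\rangle$ in a local orthonormal frame $\{e_i\}$ by splitting $JX=V+(JX)^\perp$.

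\begin{enumerate}[label=(\arabic*)]
\item Since $DJ=0$ and $D_{e_i}X=e_i$, we get $\sum_i\langle D_{e_i}(JX),e_i\rangle=\sum_i\langle Je_i,e_i\rangle=0$.
\item For the normal field $(JX)^\perp=JX^T$, the Weingarten relation gives $\sum_i\langle D_{e_i}(JX)^\perp,e_i\rangle=-\langle (JX)^\perp,H\rangle=-\langle JX^T,H\rangle$.
\item Subtracting (2) from (1) yields $\operatorname{div}V=\langle JX^T,H\rangle$.
\end{enumerate}

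Now substitute $H=-\varepsilon X^\perp$. Since $JX^T$ is normal, $\langle JX^T,X^\perp\rangle=\langle JX^T,X\rangle$. Then
\[
\langle JX^T,X\rangle=-\langle X^T,JX\rangle=-\lambda(X^T).
\]
Therefore $\operatorname{div}V=\varepsilon\lambda(X^T)$.

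Finally, $\nabla(|X|^2/2)=X^T$, so $\varepsilon X^T=\nabla f_\varepsilon$. Hence $\operatorname{div}V=\lambda(\nabla f_\varepsilon)$, that is, $\delta\lambda=-\lambda(\nabla f_\varepsilon)$. This is exactly $\delta_{f_\varepsilon}\lambda=0$.

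The only delicate points are sign bookkeeping: the orientation convention for $\delta$, the sign in the Weingarten formula, and the antisymmetry step $\langle JU,W\rangle=-\langle U,JW\rangle$. I would double-check these against the normalization $\lambda(Y)=\langle JX,Y\rangle$.
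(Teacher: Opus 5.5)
Your proof is correct: the first identity follows, as you say, from $JX^T$ being normal, and in the second identity the Weingarten sign, the skew-symmetry step $\langle JX^T,X\rangle=-\lambda(X^T)$, and your convention $\delta\eta=-\operatorname{div}\eta^\sharp$ all check out, giving $\operatorname{div}\lambda^\sharp=\varepsilon\lambda(X^T)=\lambda(\nabla f_\varepsilon)$. The convention is also the natural one, since it makes $\delta_{f_\varepsilon}$ the adjoint of $d$ in $L^2(\ee^{-f_\varepsilon}dV)$.

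The paper gives no proof of this lemma and simply quotes it from Gao--Ma--Yao, so your frame computation is a self-contained substitute for that citation. It is also consistent with the paper's collar formulas, where $\operatorname{div}(b\,e_0)=b'+(n-1)br'/r=\varepsilon ab=\lambda(\nabla f_\varepsilon)$.
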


Let
$$
        \Omega=dz_1\wedge\cdots\wedge dz_n
$$
be the standard holomorphic volume form.  On an oriented Lagrangian immersion, the Lagrangian angle $\beta_M$ is locally defined by
\begin{equation}
        \Omega(e_1,\ldots,e_n)=\ee^{i\beta_M}
\end{equation}
for an oriented orthonormal tangent frame. With our convention for $H$,
\begin{equation}\label{eq:phase-mean-curvature}
        \alpha_H=-d\beta_M.
\end{equation}
Indeed, the standard formula is $H=J\nabla\beta_M$, and therefore
$$
        \alpha_H(Y)
        =\langle J(J\nabla\beta_M),Y\rangle
        =-d\beta_M(Y).
$$
Combining \eqref{eq:alphaH-lambda} and \eqref{eq:phase-mean-curvature}, one obtains on every simply connected coordinate neighborhood
\begin{equation}\label{eq:d-beta-lambda}
        d\beta_M=\varepsilon\lambda.
\end{equation}
This local identity is all that is needed below, and no vanishing Maslov class is assumed.

Let $\Sigma^{n-1}\to\Sph^{2n-1}$ be an oriented Legendrian immersion.  If $e_1,\ldots,e_{n-1}$ is an oriented orthonormal tangent frame, define the Legendrian angle $\beta_\Sigma$ locally by
\begin{equation*}
        \Omega(X,e_1,\ldots,e_{n-1})=\ee^{i\beta_\Sigma}.
\end{equation*}
The following identity relates the Lagrangian and Legendrian angles along the boundary.

\begin{lemma}\label{lem:phase-relation}
Let $X:M^n\to\overline{\B}^{2n}$ be an oriented Lagrangian immersion, and let $\Sigma\subset\partial M$ be an oriented Legendrian boundary component.  Choose the orientation of $\Sigma$ so that $(e_1,\ldots,e_{n-1},\nu)$ is an oriented frame of $M$.  If $\nu$ satisfies \eqref{eq:intro-capillary}, then
\begin{equation}\label{eq:phase-relation}
        \beta_M
        =\beta_\Sigma+\frac\pi2-\theta+(n-1)\pi
        \quad\bmod 2\pi.
\end{equation}
Consequently,
\begin{equation}\label{eq:d-phase-relation}
        d\beta_M|_{T\Sigma}
        =d\beta_\Sigma-d\theta.
\end{equation}
\end{lemma}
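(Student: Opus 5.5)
The plan is to evaluate the holomorphic volume form $\Omega$ on the oriented orthonormal frame $(e_1,\ldots,e_{n-1},\nu)$ of $M$ along $\Sigma$. Then I would use complex multilinearity together with the capillary relation \eqref{eq:intro-capillary} to express it through $\Omega(X,e_1,\ldots,e_{n-1})$.

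First I will confirm that the frames are admissible.
\begin{itemize}[label=--]
\item The frame $(e_1,\ldots,e_{n-1},\nu)$ is orthonormal in $TM$, since $\nu$ is the unit conormal orthogonal to $T\Sigma$.
\item By the chosen orientation it is oriented, so
\[
\Omega(e_1,\ldots,e_{n-1},\nu)=\ee^{i\beta_M}.
\]
\item By definition of the Legendrian angle, $\Omega(X,e_1,\ldots,e_{n-1})=\ee^{i\beta_\Sigma}$.
\end{itemize}

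Next I substitute $\nu=\sin\theta\,X+\cos\theta\,JX$. Since $\Omega$ is $\C$-linear in each slot and $JX=iX$,
\[
\Omega(e_1,\ldots,e_{n-1},\nu)=(\sin\theta+i\cos\theta)\,\Omega(e_1,\ldots,e_{n-1},X).
\]
The scalar factor is
\[
\sin\theta+i\cos\theta=i(\cos\theta-i\sin\theta)=\ee^{i(\pi/2-\theta)}.
\]
Moving $X$ from the last slot to the first requires $n-1$ transpositions, so
\[
\Omega(e_1,\ldots,e_{n-1},X)=(-1)^{n-1}\,\Omega(X,e_1,\ldots,e_{n-1})=\ee^{i(n-1)\pi}\,\ee^{i\beta_\Sigma}.
\]
Combining the two displays gives
\[
\ee^{i\beta_M}=\ee^{i(\beta_\Sigma+\pi/2-\theta+(n-1)\pi)},
\]
which is \eqref{eq:phase-relation} modulo $2\pi$.

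For \eqref{eq:d-phase-relation}, I restrict the local lifts to $\Sigma$ and choose them continuously. The two sides of \eqref{eq:phase-relation} then differ by a locally constant multiple of $2\pi$, and the term $\pi/2+(n-1)\pi$ is also constant. Differentiating along $T\Sigma$ therefore gives $d\beta_M|_{T\Sigma}=d\beta_\Sigma-d\theta$.

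There is no real obstacle; the only points needing care are bookkeeping.
\begin{itemize}[label=--]
\item The sign $(-1)^{n-1}$ must come from the stated ordering convention.
\item The identification $JX=iX$ must be consistent with the convention $\omega(U,V)=\langle JU,V\rangle$.
\item The local lifts of the angles must be chosen compatibly on $\Sigma$ before differentiating.
\end{itemize}
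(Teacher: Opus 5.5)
Your proposal is correct and follows the paper's proof essentially verbatim: you expand $\Omega(e_1,\ldots,e_{n-1},\nu)$ via the capillary decomposition, use complex linearity to get the factor $\sin\theta+i\cos\theta=\ee^{i(\pi/2-\theta)}$, and move $X$ to the first slot at the cost of $(-1)^{n-1}$. The derivative identity then follows by differentiating continuous local lifts along $\Sigma$, exactly as in the paper.
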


\begin{proof}
By definition and the capillary decomposition,
$$
\begin{aligned}
        \ee^{i\beta_M}
        &=\Omega(e_1,\ldots,e_{n-1},\nu)\\
        &=\sin\theta\,\Omega(e_1,\ldots,e_{n-1},X)
          +\cos\theta\,\Omega(e_1,\ldots,e_{n-1},JX).
\end{aligned}
$$
The complex linearity of $\Omega$ in its last argument gives
$$
        \Omega(e_1,\ldots,e_{n-1},JX)
        =i\,\Omega(e_1,\ldots,e_{n-1},X).
$$
Moving $X$ from the last position to the first contributes $(-1)^{n-1}$, hence
$$
        \Omega(e_1,\ldots,e_{n-1},X)
        =(-1)^{n-1}\ee^{i\beta_\Sigma}.
$$
It follows that
$$
\begin{aligned}
        \ee^{i\beta_M}
        &=(-1)^{n-1}(\sin\theta+i\cos\theta)\ee^{i\beta_\Sigma}\\
        &=\ee^{i((n-1)\pi+\pi/2-\theta)}\ee^{i\beta_\Sigma}.
\end{aligned}
$$
This proves \eqref{eq:phase-relation}.  Differentiating a local real-valued lift gives \eqref{eq:d-phase-relation}.
\end{proof}

The following equivalence is contained in
\cite[Section~2.2]{Haskins2004}.

\begin{lemma}\label{lem:legendrian-minimal-phase}
Let
$$
        \psi:N^{n-1}\longrightarrow\Sph^{2n-1}
$$
be an oriented connected Legendrian immersion.  The following are equivalent:
\begin{enumerate}[label=\rm(\roman*)]
\item the Legendrian angle is locally constant;
\item the cone
$$
        C(N)=\{r\psi(p):r>0,\ p\in N\}
$$
is special Lagrangian, with some constant phase;
\item $\psi$ is minimal in $\Sph^{2n-1}$.
\end{enumerate}
\end{lemma}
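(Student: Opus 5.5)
We need to prove Lemma \ref{lem:legendrian-minimal-phase}, the equivalence of (i), (ii) and (iii). The plan is to compare each condition with the mean curvature of the cone $C(N)$, using two facts: the Lagrangian angle of the cone is determined by the Legendrian angle of $\psi$, and the mean curvature of a cone is controlled by that of its link.

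\textbf{Step 1: (i)$\Leftrightarrow$(ii).} Parametrize the cone by $F(r,p)=r\psi(p)$. It is Lagrangian, because $\psi$ is Legendrian: the radial vector $\psi$ and the tangent vectors $r\,d\psi(e_j)$ are pairwise $\omega$-orthogonal. Indeed $\omega(\psi,d\psi(e_j))=\langle J\psi,d\psi(e_j)\rangle=0$ by the Legendrian condition, and $\omega(d\psi(e_i),d\psi(e_j))=0$ as well. Take the oriented orthonormal frame $(\psi,e_1,\dots,e_{n-1})$ at the point $r\psi(p)$. Then
\[
\Omega(\psi,e_1,\dots,e_{n-1})=\ee^{i\beta_\Sigma}(p),
\]
so the Lagrangian angle of the cone is $\beta_{C}(r,p)=\beta_\Sigma(p)$, which is independent of $r$. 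A connected Lagrangian is special Lagrangian with constant phase exactly when its Lagrangian angle is locally constant. This gives (i)$\Leftrightarrow$(ii).

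\textbf{Step 2: (ii)$\Leftrightarrow$(iii).} Use $H_C=J\nabla\beta_C$. Since $\beta_C$ does not depend on $r$, $\beta_C$ is locally constant exactly when $H_C=0$. So it suffices to show that $C(N)$ is minimal in $\C^n$ if and only if $\psi$ is minimal in $\Sph^{2n-1}$. This is the standard cone computation. At the point $r\psi(p)$, the normal space of the cone is the normal space of $\psi$ inside $T\Sph^{2n-1}$; this holds because $\psi$ is tangent to the cone. The radial direction is a geodesic of $\C^n$, so the second fundamental form satisfies $B(\partial_r,\cdot)=0$. In the link directions, $B_C(e_i,e_j)=r^{-1}B_\psi(e_i,e_j)$, since the sphere term $-\langle e_i,e_j\rangle\psi$ is tangent to the cone and drops out. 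Taking traces gives $H_C=r^{-1}H_\psi$. Hence $C(N)$ is minimal if and only if $\psi$ is minimal in the sphere.

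\textbf{Main obstacle.} The only delicate point is bookkeeping. One must check that the orientation and frame conventions make $\beta_C=\beta_\Sigma$ exactly, up to a constant; a constant shift does not affect local constancy. One must also confirm that "special Lagrangian with some constant phase" is equivalent to local constancy of $\beta_C$. This uses the fact that connectedness of $N$ makes $C(N)$ connected, and it does not require a global single-valued lift of the angle. Alternatively, one can simply cite \cite[Section~2.2]{Haskins2004}, as the statement does.
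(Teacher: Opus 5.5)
Your proposal is correct and follows the paper's proof essentially step by step: you identify the cone's Lagrangian angle with the Legendrian angle through the frame $(\psi,\psi_*e_1,\dots,\psi_*e_{n-1})$, use $H=J\nabla\beta$ to equate ``special Lagrangian'' with ``minimal'' for the cone, and then use $H_C=r^{-1}H_\psi$. Your explicit checks that $B_C(\partial_r,\cdot)=0$ and that $B_C=r^{-1}B_\psi$ in the link directions fill in the cone fact that the paper states in one line without computation.
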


\begin{proof}
For completeness, we give the short proof.  Consider the cone immersion
\begin{equation*}
\begin{aligned}
        F:(0,\infty)\times N&\longrightarrow\C^n,\\
        (r,p)&\longmapsto r\psi(p).
\end{aligned}
\end{equation*}
If $e_1,\ldots,e_{n-1}$ is an oriented orthonormal frame on $N$, then
an oriented orthonormal frame for the tangent space of the cone is
$$
        E_0=\psi,
        \qquad
        E_a=\psi_*e_a,
        \qquad
        1\leq a\leq n-1.
$$
The Legendrian condition implies that the cone is Lagrangian.  

Moreover,
by the definition of the Legendrian angle,
$$
\begin{aligned}
        &\Omega(E_0,E_1,\ldots,E_{n-1})
        \\&=
        \Omega(\psi,\psi_*e_1,\ldots,\psi_*e_{n-1})\\
        &=
        \ee^{i\beta_N}.
\end{aligned}
$$
Hence the Lagrangian angle of the cone agrees with the Legendrian angle
of the link, modulo $2\pi$.  It follows that the Legendrian angle is
locally constant if and only if the cone is special Lagrangian, proving
the equivalence of \rm(i) and \rm(ii).

For a Lagrangian immersion in $\C^n$, the Lagrangian angle is locally
constant if and only if the mean-curvature vector vanishes, by
\eqref{eq:phase-mean-curvature}. Thus \rm(ii) is equivalent to the
minimality of the cone.

Finally, the Euclidean mean-curvature vector of a cone is $r^{-1}$ times
the spherical mean-curvature vector of its link, viewed in the common
normal bundle.  
Consequently, the cone is minimal in $\C^n$ if and only
if its link is minimal in $\Sph^{2n-1}$.  This proves the equivalence of \rm(ii) and \rm(iii).
\end{proof}

\subsection{Analyticity and one-sided Cauchy uniqueness}

The next result is a direct application of Morrey's analytic regularity theorem for analytic quasilinear elliptic systems \cite[Chapter~6]{Morrey1966}.  We give the graph equation explicitly because the same form is needed in the boundary uniqueness argument.

\begin{lemma}\label{lem:analyticity}
Fix $\varepsilon\in\{-1,0,1\}$. Let
$$
        X:M^n\longrightarrow\mathbb R^N
$$
be a smooth immersed solution of
$$
        H+\varepsilon X^\perp=0.
$$
Then $M^\circ$ admits a real-analytic atlas compatible with its smooth
structure such that $X|_{M^\circ}$ is real analytic.  Equivalently, near
every interior point, the corresponding local sheet of $X(M^\circ)$ is
a real-analytic graph in $\mathbb R^N$.
\end{lemma}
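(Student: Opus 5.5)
The plan is to work locally near an arbitrary interior point $x_0\in M^\circ$ and write the immersion as a graph. Because $X$ is an immersion, after a rigid motion of $\mathbb R^N$ we can assume $dX(x_0)$ maps $T_{x_0}M$ onto $\mathbb R^n\times\{0\}$. Then there is a neighbourhood $U$ of $x_0$, a ball $W\subset\mathbb R^n$, and a smooth map $u:W\to\mathbb R^{N-n}$ such that the local sheet $X(U)$ equals the graph $\{(x,u(x)):x\in W\}$. The chart $\pi\circ X|_U:U\to W$, where $\pi$ is the projection onto the first factor, is a smooth diffeomorphism. These graph charts will be our candidate analytic atlas. If we show every such $u$ is real analytic, the transition maps between two overlapping graph charts are analytic too. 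The reason is that a transition is $\pi'\circ G$, where $G(x)=(x,u(x))$ followed by a rigid motion, and $\pi'$ is a linear projection; the inverse function theorem in the analytic category then handles the inverse. So the whole statement reduces to analyticity of $u$.

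Next we write the equation for $u$. The graph metric is $g_{ij}=\delta_{ij}+\partial_iu\cdot\partial_ju$. The mean curvature of the graph is the normal projection of $\Delta_g G$, where $G(x)=(x,u(x))$ and $\Delta_g G=g^{ij}\partial_{ij}G-g^{ij}\Gamma_{ij}^k\partial_kG$. Its normal part is the projection $P^\perp$ of $(0,g^{ij}\partial_{ij}u)$. So $H+\varepsilon X^\perp=0$ reads
\[
P^\perp\bigl(0,\;g^{ij}(Du)\partial_{ij}u+\varepsilon\,(x,u)\bigr)=0 .
\]
The normal space is spanned by the vectors $(-\nabla u^\alpha,e_\alpha)$. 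Pairing with these shows the system is equivalent to
\[
g^{ij}(Du)\,\partial_{ij}u^\alpha=-\varepsilon\bigl(u^\alpha-x\cdot\nabla u^\alpha\bigr)\qquad(\alpha=1,\dots,N-n).
\]
To see this, note that the tangential part $(\text{tangent})$ of $(0,g^{ij}u_{ij})$ is automatically removed once we use the standard identity that $g^{ij}\partial_{ij}G$ is already normal in harmonic-type form. Alternatively, we write the system directly as $\mathcal P^\perp(Du)\,(g^{ij}\partial_{ij}u+\varepsilon(\dots))=0$ and use that the map from the $\mathbb R^{N-n}$-part to the normal space is an isomorphism. Either way we get a quasilinear system. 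Its principal part is $g^{ij}(Du)\partial_{ij}$ times the identity matrix, and its coefficients are polynomial/rational, hence analytic, in $(x,u,Du)$. This is the most delicate bookkeeping point: one has to make sure the projection really yields a diagonal principal symbol. I would verify it by the standard graph computation, as in the minimal-graph case, adding the lower-order analytic term $\varepsilon X^\perp$.

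The principal symbol is $g^{ij}(Du)\xi_i\xi_j\,\mathrm{Id}$. Since $g$ is positive definite, this is uniformly elliptic on a slightly smaller ball, where $Du$ is bounded. It is therefore elliptic in the Petrovskii (and Douglis–Nirenberg) sense. The solution $u$ is smooth, in particular $C^{2,\alpha}$. Morrey's theorem on analytic regularity for nonlinear elliptic systems with analytic coefficients \cite[Chapter~6]{Morrey1966} then gives that $u$ is real analytic on $W$. This analyticity of the graph functions yields both the analytic atlas on $M^\circ$ compatible with the smooth structure and the real analyticity of $X|_{M^\circ}$, which proves the equivalence stated in the lemma. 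The main obstacle is only the verification that the equation is a genuinely elliptic quasilinear analytic system. Morrey's theorem is then applied as a black box.
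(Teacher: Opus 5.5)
Your proposal is correct and follows essentially the same route as the paper. Both arguments write each interior sheet as a graph $u$ over its tangent plane, and pairing $H+\varepsilon X^\perp=0$ with the normal frame $(-\nabla u^\alpha,e_\alpha)$ (the paper's $N_\alpha=E_\alpha-u_i^\alpha e_i$) gives a quasilinear system with diagonal principal part $g^{ij}(Du)\partial_{ij}$ and analytic lower-order terms; Morrey's theorem then applies, and the graph-chart transition maps are analytic. Two small points: your aside that $g^{ij}\partial_{ij}G$ is ``already normal'' is false outside harmonic coordinates, but it is not needed since pairing with the normal frame removes the tangential terms; and you should normalize by a rotation rather than a general rigid motion, because a translation changes $X^\perp$, though only by an analytic term.
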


\begin{proof}
Fix $q\in M^\circ$, and set
$$
        p=X(q),
        \qquad
        P=dX_q(T_qM).
$$
Since $X$ is an immersion, it is an embedding on a sufficiently small
neighborhood $U$ of $q$.  Moreover, the differential at $q$ of
$$
        \pi_P\circ(X-p):U\longrightarrow P
$$
is an isomorphism, where \(\pi_P:\mathbb R^N\to P\) denotes the
orthogonal projection.  After decreasing $U$, the inverse function theorem
therefore gives a coordinate domain $V\subset P$ in which the image is
the graph
$$
        Y(z)=p+z+u(z),
        \qquad
        u:V\longrightarrow P^\perp.
$$

Choose orthonormal bases $\{e_i\}_{i=1}^n$ of $P$ and
$\{E_\alpha\}_{\alpha=1}^{N-n}$ of $P^\perp$, and write
$$
        u=u^\alpha E_\alpha.
$$
Then
$$
        Y_i=e_i+u_i^\alpha E_\alpha,
        \qquad
        g_{ij}
        =\delta_{ij}+\sum_\alpha u_i^\alpha u_j^\alpha.
$$
A convenient normal frame is
$$
        N_\alpha
        =E_\alpha-u_i^\alpha e_i,
        \qquad
        \alpha=1,\ldots,N-n.
$$
Taking the inner product of
$$
        H+\varepsilon Y^\perp=0
$$
with $N_\alpha$ gives
\begin{equation}\label{eq:graph-self-similar-system}
        g^{ij}(Du)u_{ij}^\alpha
        +\varepsilon
        \left\langle p+z+u,N_\alpha(Du)\right\rangle
        =0,
        \qquad
        \alpha=1,\ldots,N-n.
\end{equation}
Indeed,
$$
        \langle H,N_\alpha\rangle
        =g^{ij}(Du)u_{ij}^\alpha,
$$
while
$$
        \langle Y^\perp,N_\alpha\rangle
        =\langle Y,N_\alpha\rangle.
$$

The principal symbol of \eqref{eq:graph-self-similar-system} is
$$
        g^{ij}(Du)\xi_i\xi_j\,
        \operatorname{Id}_{P^\perp}.
$$
After decreasing $V$ if necessary, the system is uniformly elliptic.
Its coefficients are real analytic in $(z,u,Du)$.  Morrey's analytic
regularity theorem for analytic quasilinear elliptic systems therefore
implies that $u$ is real analytic.

It remains to check that the graphical coordinates obtained at
different points are analytically compatible.  For $q'\in M^\circ$,
write
$$
        P_{q'}=dX_{q'}(T_{q'}M),
        \qquad
        y_{q'}=\pi_{P_{q'}}\circ\bigl(X-X(q')\bigr).
$$
On the overlap of two graphical coordinate neighborhoods,
$$
\begin{aligned}
        y_{q'}\circ y_q^{-1}(z)
        &=
        \pi_{P_{q'}}
        \left(
        X\circ y_q^{-1}(z)-X(q')
        \right).
\end{aligned}
$$
The map $X\circ y_q^{-1}$ is real analytic by the first part of the
proof, and $\pi_{P_{q'}}$ is linear.  Hence every transition map is
real analytic.  The graphical coordinates therefore define a
real-analytic atlas compatible with the original smooth structure, and
$X$ is real analytic with respect to this atlas.
\end{proof}

We shall use the following vector-valued unique-continuation principle.  It is the form obtained from the Carleman estimate in Aronszajn \cite{Aronszajn1957}; the same argument applies componentwise to systems with a common scalar principal operator. An exterior-form version was developed by Aronszajn--Krzywicki--Szarski \cite{Aronszajn1962}.

\begin{lemma}\label{lem:ucp-external}
Let $w\in W^{2,2}_{\mathrm{loc}}(\Omega;\R^m)$, where $\Omega\subset\R^n$ is connected.  Suppose
\begin{equation}\label{eq:ucp-inequality}
        |A^{ij}\partial_{ij}w|
        \le C\bigl(|w|+|Dw|\bigr)
\end{equation}
almost everywhere, where $A^{ij}=A^{ji}$ are Lipschitz and uniformly elliptic and the same scalar operator $A^{ij}\partial_{ij}$ acts on each component.  If $w$ vanishes on a nonempty open subset of $\Omega$, then $w\equiv0$ in $\Omega$.
\end{lemma}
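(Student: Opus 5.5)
The plan is to prove the local statement: for every point on the boundary of the zero region, $w$ vanishes in a neighbourhood of that point. Let
\[
        Z=\operatorname{int}\{x\in\Omega: w=0 \text{ a.e. near } x\}.
\]
This set is open and, by hypothesis, nonempty. Since $\Omega$ is connected, it suffices to show that $Z$ is relatively closed in $\Omega$.

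Suppose $Z\ne\Omega$, so that $\partial Z\cap\Omega\neq\emptyset$. Choose a point $y\in Z$ close to $\partial Z\cap\Omega$, and let $\rho$ be the largest radius with $B_\rho(y)\subset Z$. Then the sphere $\partial B_\rho(y)$ meets $\partial Z\cap\Omega$ at some point $x_0$, and $w$ vanishes on $B_\rho(y)$. The goal is to show that $w\equiv 0$ on a small ball $B_\delta(x_0)$. This contradicts $x_0\in\partial Z$ and closes the argument.

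The key tool is a scalar Carleman estimate for $L=A^{ij}\partial_{ij}$ with a Hörmander-type convexified weight. Set
\[
        \varphi(x)=\ee^{-\mu|x-y|^2}.
\]
This weight is strictly decreasing in $|x-y|$, so it is largest on the side of $B_\rho(y)$, where $w=0$. For $\mu$ large, $\varphi$ is strongly pseudoconvex with respect to $L$ near $x_0$. The estimate to aim for is: for all $u\in C^\infty_c(B_\delta(x_0))$ and all $\tau\ge\tau_0$,
\begin{equation*}
        \tau^3\!\int \ee^{2\tau\varphi}|u|^2+\tau\!\int \ee^{2\tau\varphi}|Du|^2
        \le C\!\int \ee^{2\tau\varphi}|Lu|^2 .
\end{equation*}
This estimate is where I expect the main obstacle. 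The standard proof conjugates $L$ by $\ee^{\tau\varphi}$, splits the conjugated operator into symmetric and antisymmetric parts, and integrates the commutator by parts. With only Lipschitz $A^{ij}$, one derivative falls on the coefficients. These terms are bounded by $C\tau^2\int\ee^{2\tau\varphi}|u|^2$ and $C\int\ee^{2\tau\varphi}|Du|^2$, and so they are absorbed by the positive $\tau^3$ and $\tau$ terms once $\mu$ and then $\tau$ are chosen large. One should check that the second-order coefficients never need to be differentiated twice. This is exactly the setting of Aronszajn's estimate, so I would follow his argument, or Hörmander's, here. By density the estimate extends to $u\in W^{2,2}$ with compact support.

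Because the operator $A^{ij}\partial_{ij}$ is the same scalar operator acting on each component $w^k$, the estimate applies to each component separately with the same weight and constants. Summing over $k$ gives the same inequality for $\R^m$-valued $u$, with $|u|^2=\sum_k|u^k|^2$. The coupling between components enters only through the right-hand side of \eqref{eq:ucp-inequality}, which is controlled in norm.

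Now apply the vector estimate to $u=\chi w$, where $\chi$ is a cutoff equal to $1$ on $B_{\delta/2}(x_0)$ and supported in $B_\delta(x_0)$. Where $\chi\equiv1$, we have $|Lu|=|Lw|\le C(|u|+|Du|)$. These terms are absorbed into the left-hand side for $\tau$ large. The remaining error terms contain derivatives of $\chi$, so they are supported in the annulus $B_\delta\setminus B_{\delta/2}$ intersected with $\operatorname{supp} w\subset\{|x-y|\ge\rho\}$. Choosing $\delta$ small relative to the geometry, pseudoconvexity gives
\[
        \varphi\le \varphi(x_0)-2\kappa
\]
on that error region, for some $\kappa>0$.

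Restrict the left-hand side to the small set $U=\{\varphi>\varphi(x_0)-\kappa\}\cap B_{\delta/2}(x_0)$. This yields
\[
        \tau^3\ee^{2\tau(\varphi(x_0)-\kappa)}\int_U|w|^2
        \le C\,\ee^{2\tau(\varphi(x_0)-2\kappa)}\|w\|_{W^{1,2}}^2 .
\]
Letting $\tau\to\infty$ gives $w=0$ on $U$. The set $U$ is a neighbourhood of $x_0$, which contradicts $x_0\in\partial Z$. Hence $Z=\Omega$ and $w\equiv0$.
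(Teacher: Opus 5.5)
Summing one scalar Carleman estimate over the components is the same device the paper uses. The paper then invokes Aronszajn's estimate for \emph{strong} unique continuation. You only aim for weak unique continuation across a sphere touching the zero set, which is a legitimate alternative route.

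The step that fails as written is the claimed gap $\varphi\le\varphi(x_0)-2\kappa$ on the error region. For $\varphi(x)=\ee^{-\mu|x-y|^2}$, the level set $\{\varphi=\varphi(x_0)\}$ is exactly the sphere $\partial B_\rho(y)$, i.e.\ the boundary of the region where you already know $w=0$. All you know about $\operatorname{supp}w$ near $x_0$ is that it lies in $\{|x-y|\ge\rho\}$. For $n\ge2$ this set contains points of $\partial B_\rho(y)$ at every distance from $x_0$ in $[0,2\rho]$. Hence it meets the annulus $B_\delta(x_0)\setminus B_{\delta/2}(x_0)$ however small $\delta$ is. Such sphere points may well lie in $\operatorname{supp}w$, since other points of $\partial B_\rho(y)$ besides $x_0$ can belong to $\partial Z$. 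At those points $\varphi=\varphi(x_0)$. So the supremum of $\varphi$ over the error region can equal $\varphi(x_0)$, no $\kappa>0$ exists, and the final comparison of $\tau^3\ee^{2\tau(\varphi(x_0)-\kappa)}$ with $\ee^{2\tau(\varphi(x_0)-2\kappa)}$ yields nothing. Pseudoconvexity only controls the sign of the commutator term in the Carleman estimate. It does not push the level set through $x_0$ off $\operatorname{supp}w$, and shrinking $\delta$ does not help.

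The standard repair, as in H\"ormander's proof of uniqueness across a strongly pseudoconvex surface, is to tilt the weight so that its level set through $x_0$ meets $\{|x-y|\ge\rho\}$ only at $x_0$. Take $\varphi=\ee^{\mu\psi}$ with $\psi(x)=-|x-y|^2-c|x-x_0|^2$ for a fixed $c>0$.
\begin{itemize}
\item On $\{|x-y|\ge\rho\}$ one has $\psi(x)\le\psi(x_0)-c|x-x_0|^2$. Hence $\psi\le\psi(x_0)-c\delta^2/4$ on the error region, which produces the needed $\kappa$.
\item Since $\psi=-(1+c)|x-y'|^2+\mathrm{const}$ with $y'=(y+cx_0)/(1+c)$, this is again a convexified radial weight. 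Its level set through $x_0$ is a smaller sphere internally tangent to $\partial B_\rho(y)$ at $x_0$.
\item For $\delta<\rho/(1+c)$ one has $\nabla\psi\ne0$ on $B_\delta(x_0)$, so your Carleman estimate holds unchanged.
\end{itemize}
With this modification the cutoff, absorption, $\tau\to\infty$, and open--closed steps all go through.

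A minor point in the part you defer to the literature concerns the terms where a derivative falls on $A^{ij}$. Inside the weighted integrals they are of size $\tau^3\mu^3\varphi^3|u|^2$ and $\tau\mu\varphi|Du|^2$, not $C\tau^2|u|^2$ and $C|Du|^2$. They are absorbed by the main terms $\tau^3\mu^4\varphi^3|u|^2$ and $\tau\mu^2\varphi|Du|^2$ only because $\mu$ is fixed large before $\tau$, which is the order you indicate.
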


\begin{proof}
Aronszajn's Carleman estimate applies to the scalar operator
$A^{ij}\partial_{ij}$ with Lipschitz leading coefficients
\cite{Aronszajn1957}.  Apply that estimate to each component $w^\alpha$ and
sum over $\alpha$.  Because the principal operator is the same on every
component, the left-hand sides add without producing mixed second-order
terms, while the right-hand side is bounded by
$C(|w|+|Dw|)$.  The usual absorption argument therefore gives strong unique
continuation for the vector $w$.  In particular, vanishing on a nonempty open
set implies vanishing to infinite order at every boundary point of that set,
and strong unique continuation propagates the zero set through the connected
domain.  Hence $w\equiv0$ on $\Omega$.  The same componentwise summation is
also implicit in the exterior-form version of
Aronszajn--Krzywicki--Szarski~\cite{Aronszajn1962}.
\end{proof}
\begin{remark}
Lemma~\ref{lem:ucp-external} is the only unique-continuation input used in the paper.  In the application below the coefficients are smooth, and the common scalar principal part follows directly from the graph system \eqref{eq:graph-self-similar-system}.
\end{remark}

\begin{lemma}\label{lem:cauchy}
Fix $\varepsilon\in\{-1,0,1\}$.  For $a=1,2$, let
$$
        X_a:M_a^n\longrightarrow\mathbb R^N
$$
be smooth immersed solutions of
$$
        H_a+\varepsilon X_a^\perp=0.
$$
Let $\Sigma_a$ be compact boundary components of $M_a$, let
$$
        F:\Sigma_1\longrightarrow\Sigma_2
$$
be a diffeomorphism, and let $\eta_a$ denote the inward unit conormal
along $\Sigma_a$.  Suppose that, for every $p\in\Sigma_1$,
$$
        X_1(p)=X_2(F(p))
$$
and
$$
        dX_1|_p\bigl(\eta_1(p)\bigr)
        =
        dX_2|_{F(p)}\bigl(\eta_2(F(p))\bigr).
$$
Then there exist one-sided neighborhoods $V_a$ of $\Sigma_a$ and a
diffeomorphism
$$
        \Phi:V_1\longrightarrow V_2
$$
such that
$$
        \Phi|_{\Sigma_1}=F,
        \qquad
        X_1=X_2\circ\Phi
        \quad\text{on }V_1.
$$

\end{lemma}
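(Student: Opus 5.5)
We plan to reduce Lemma~\ref{lem:cauchy} to the unique-continuation principle of Lemma~\ref{lem:ucp-external}, applied to the difference of two graph functions over a common tangent plane. First we extend both solutions slightly beyond the boundary. Using collar coordinates $(t,p)\in[0,\delta)\times\Sigma_a$, with $t$ the distance along the inward conormal, both $X_1$ and $X_2\circ F$ are defined on one-sided half-collars. Fix $p_0\in\Sigma_1$ and let $P=dX_1(T_{p_0}M_1)$. The hypotheses give $X_1=X_2\circ F$ on $\Sigma_1$. They also give $dX_1|_{T\Sigma_1}=dX_2\circ dF$, obtained by differentiating the first identity tangentially, and equality of the conormal images. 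Hence $dX_2(T_{F(p_0)}M_2)=P$ as well.

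Near $p_0$, both sheets are graphs $z\mapsto p+z+u_a(z)$ over a common half-domain $V^+\subset P$. This half-domain is bounded by the hypersurface $S=\pi_P(X_1(\Sigma_1))$, which is the same for $a=1,2$ because the boundary images coincide. On $S$ we have $u_1=u_2$, and $Du_1=Du_2$ because the full tangent planes agree there. Each $u_a$ solves the quasilinear system \eqref{eq:graph-self-similar-system}, whose principal part is the scalar operator $g^{ij}(Du_a)\partial_{ij}$.

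Next we set $w=u_1-u_2$ and subtract the two equations. The standard linearization gives
\[
g^{ij}(Du_1)\,w_{ij}=\bigl(g^{ij}(Du_2)-g^{ij}(Du_1)\bigr)u^\alpha_{2,ij}+\varepsilon(\cdots),
\]
and the right-hand side is bounded by $C(|w|+|Dw|)$ since the coefficients are smooth in $(z,u,Du)$. So $w$ satisfies \eqref{eq:ucp-inequality} with a common scalar principal operator having Lipschitz coefficients.

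To obtain vanishing on an open set, we extend $w$ by zero across $S$ to a small two-sided neighbourhood $\Omega$. The Cauchy data $w=Dw=0$ hold on $S$, and $w$ is smooth up to $S$ from one side, so the extension lies in $W^{2,2}_{\mathrm{loc}}$. We also extend the coefficient $g^{ij}(Du_1)$ across $S$ in a Lipschitz way, for example by a Whitney extension of $u_1$. The extended $w$ still satisfies the differential inequality almost everywhere, since it holds trivially where $w=0$. Lemma~\ref{lem:ucp-external} then gives $w\equiv0$ on $\Omega$, hence $u_1=u_2$ on $V^+$ near $p_0$.

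The main obstacle is this extension step. We must check that the $W^{2,2}$ regularity survives across $S$, which relies on the matching of first derivatives. We must also make sure that the extended coefficients stay uniformly elliptic, which holds after shrinking $\Omega$.

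Finally we define $\Phi$ locally as $(X_2|_{U_2})^{-1}\circ X_1$, where $X_2$ is an embedding on a small neighbourhood $U_2$. Since the two images coincide locally, this is a diffeomorphism, it restricts to $F$ on $\Sigma_1$, and it satisfies $X_1=X_2\circ\Phi$. Local definitions agree on overlaps, because $X_2$ is injective on small sets and the images coincide there. By compactness of $\Sigma_1$, finitely many such charts cover a uniform one-sided collar. Gluing them yields $\Phi:V_1\to V_2$, after shrinking $V_1$ so that $\Phi$ is injective, which follows from the injectivity of $F$ together with a standard tubular argument.
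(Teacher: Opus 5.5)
Your proposal is correct and follows the paper's route: graphs over the common tangent plane with Cauchy data $w=Dw=0$ on the projected boundary, subtraction to a system with a common scalar principal part, zero extension into $W^{2,2}_{\mathrm{loc}}$ with Lipschitz-extended leading coefficients, Lemma~\ref{lem:ucp-external}, and patching via local injectivity of $X_2$ plus compactness. The differences are cosmetic: you skip the paper's flattening of $\Gamma$ by a common diffeomorphism $\chi$, and you freeze the principal coefficient at $g^{ij}(Du_1)$ rather than averaging along $u_2+t(u_1-u_2)$. You should also state explicitly, as the paper does, that equality of the inward conormal images is what places both projected sheets on the same side of $S$.
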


\begin{proof}
We work sheetwise near a fixed point $p_1\in\Sigma_1$ and its corresponding
point $p_2=F(p_1)\in\Sigma_2$.  The possible presence of other preimages of
the same ambient point is irrelevant, because an immersion is an embedding on
a sufficiently small neighborhood of each prescribed preimage.  Shrink the
two neighborhoods so that both chosen sheets are embedded.

Let $P$ be their common tangent $n$-plane at the boundary point, and let
$\pi_P$ be orthogonal projection onto $P$.  The differential of
$\pi_P\circ X_a$ is an isomorphism at $p_a$; hence, after shrinking again,
each sheet is a graph over a one-sided domain in $P$.  The equality of the
boundary parametrizations implies that the projected boundary hypersurfaces
are the same.  Equality of the specified inward conormals implies that the two
projected sheets lie on the same side of that hypersurface.  We may therefore
choose one connected one-sided neighborhood $\Omega^+\subset P$, contained in
both projected domains, and write
\begin{equation}
        Y_a(y)=y+u_a(y),
        \qquad
        u_a:\Omega^+\longrightarrow P^\perp,
        \qquad a=1,2.
\end{equation}
Here the origin and a fixed translation have been absorbed into the affine
coordinate $y$.

Let $\Gamma=\partial\Omega^+$ denote the common projected boundary portion.
On $\Gamma$, equality of the boundary immersions gives $u_1=u_2$.  For a graph
over a fixed plane, its tangent space is the graph of the unique linear map
$Du_a:P\to P^\perp$.  Since the full tangent $n$-planes of the two immersed
sheets agree at every corresponding boundary point, it follows that
$Du_1=Du_2$ on $\Gamma$.  The conormal hypothesis is used here to select the
same one-sided graph; it rules out comparing the upper sheet of one immersion
with the lower sheet of the other.  Thus, with $w=u_1-u_2$,
\begin{equation}
        w=0,
        \qquad
        Dw=0
        \qquad\ \text{on }\Gamma.
\end{equation}

Choose a smooth diffeomorphism
$\chi:B_r^+\to\Omega^+$ which maps $I_r=\{y_n=0\}$ onto $\Gamma$, and replace
$u_a$ by $u_a\circ\chi$.  Because the same fixed change of independent
variables is used for both systems, the transformed principal operator remains
a scalar uniformly elliptic operator acting identically on every normal
component.  More explicitly, the affine graph equation
\begin{equation*}
        g^{ij}(Du_a)\partial_{ij}u_a^\alpha
        +\varepsilon\langle y+u_a,N_\alpha(Du_a)\rangle=0
\end{equation*}
transforms into a quasilinear system whose leading coefficient is
\begin{equation*}
        \widehat g_a^{kl}
        =g_a^{ij}\,\partial_i(\chi^{-1})^k\,\partial_j(\chi^{-1})^l,
\end{equation*}
while the second derivatives of $\chi$ occur only in lower-order terms.
After this common flattening, we have
\begin{equation}\label{eq:full-cauchy-data}
        w=0,
        \qquad
        Dw=0
        \qquad\ \text{on }I_r.
\end{equation}

Subtract the two transformed quasilinear systems.  For the leading terms,
use the fundamental theorem of calculus along the segment
$u_2+t(u_1-u_2)$ in the variables $(u,Du)$.  The result is a linear system
\begin{equation}
        A^{ij}(y)\partial_{ij}w^\alpha
        +B^i_{\alpha\beta}(y)\partial_iw^\beta
        +C_{\alpha\beta}(y)w^\beta=0,
\end{equation}
where $A^{ij}=A^{ji}$ is smooth, uniformly elliptic, and independent of the
normal index $\alpha$.  Consequently,
\begin{equation}
        |A^{ij}\partial_{ij}w|
        \le C\bigl(|w|+|Dw|\bigr)
        \qquad\text{in }B_r^+.
\end{equation}

Define the zero extension
\begin{equation}
        \widetilde w(y)=
        \begin{cases}
        w(y),&y_n\ge0,\\
        0,&y_n<0.
        \end{cases}
\end{equation}
We verify the required Sobolev regularity.  For a scalar component and a test
function $\zeta$, integration by parts in the upper half-ball gives
\begin{equation}
        \langle\partial_i\widetilde w,\zeta\rangle
        =\int_{B_r^+}(\partial_iw)\zeta,
\end{equation}
because the boundary term contains the trace of $w$, which is zero.  A second
integration by parts gives
\begin{equation}\label{eq:second-extension-derivative}
        \langle\partial_{ij}\widetilde w,\zeta\rangle
        =\int_{B_r^+}(\partial_{ij}w)\zeta,
\end{equation}
because the remaining interface term contains the trace of $\partial_jw$,
which also vanishes.  Hence $\widetilde w\in W^{2,2}_{\mathrm{loc}}(B_r)$ and
its weak second derivatives are precisely the zero extensions of those of
$w$.

Extend $A^{ij}$ to Lipschitz uniformly elliptic coefficients on $B_r$, and
extend the lower-order coefficients boundedly.  Equations
\eqref{eq:full-cauchy-data}--\eqref{eq:second-extension-derivative} show that
no distribution supported on $I_r$ occurs, so $\widetilde w$ satisfies an
inequality of the form \eqref{eq:ucp-inequality} in the full ball.  Since
$\widetilde w$ vanishes on the open lower half-ball,
Lemma~\ref{lem:ucp-external} yields $\widetilde w\equiv0$ in a smaller ball.
Thus the two prescribed sheets agree near the chosen boundary point.

It remains to patch the local identifications.  Cover the compact boundary
component by finitely many sheetwise embedding neighborhoods.  On an overlap,
the two local identifications both fix the boundary parametrization and map
one chosen embedded sheet onto the same chosen embedded sheet.  After
shrinking the cover, local injectivity of either immersion forces the two
identifications to coincide on the overlap.  They therefore patch to a
one-sided collar diffeomorphism.  Finally compactness permits one common positive
collar width, which completes the proof.
\end{proof}

\subsection{Legendrian capillary boundary geometry}

Let $\Sigma$ be a connected component of $\partial M$.  Throughout this section, $\nu$ denotes the outward unit conormal, and $U,V,W$ denote tangent vectors to $\Sigma$.  The contact angle $\theta$ is constant on $\Sigma$.

We will use the following identities on the boundary.
\begin{lemma}
Along $\Sigma$, for every $U\in T\Sigma$,
\begin{equation}\label{eq:boundary-nabla-nu}
        \nabla_U\nu=\sin\theta\,U
\end{equation}
and
\begin{equation}\label{eq:boundary-B-Unu}
        B(U,\nu)=\cos\theta\,JU.
\end{equation}
Moreover,
\begin{equation}\label{eq:X-boundary-decomposition}
        X=\sin\theta\,\nu-\cos\theta\,J\nu,
\end{equation}
so
\begin{equation}\label{eq:Xperp-boundary}
        X^\perp=-\cos\theta\,J\nu
\end{equation}
and
\begin{equation}\label{eq:H-boundary}
        H=\varepsilon\cos\theta\,J\nu.
\end{equation}
\end{lemma}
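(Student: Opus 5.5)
I will derive all five identities from the capillary relation \eqref{eq:intro-capillary}, differentiated along $\Sigma$, together with the Lagrangian structure. The order is: first the algebraic decomposition \eqref{eq:X-boundary-decomposition}, then \eqref{eq:boundary-nabla-nu} and \eqref{eq:boundary-B-Unu} from a single differentiation, and finally \eqref{eq:Xperp-boundary} and \eqref{eq:H-boundary} as corollaries.

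\textbf{The decomposition.} Apply $J$ to $\nu=\sin\theta\,X+\cos\theta\,JX$ to get $J\nu=\sin\theta\,JX-\cos\theta\,X$. Solving this $2\times2$ system gives
\[
X=\sin\theta\,\nu-\cos\theta\,J\nu,
\]
which is \eqref{eq:X-boundary-decomposition}. Since $M$ is Lagrangian, $J\nu$ is normal, so the normal part of $X$ is $X^\perp=-\cos\theta\,J\nu$. This is \eqref{eq:Xperp-boundary}. The self-similar equation $H=-\varepsilon X^\perp$ then gives \eqref{eq:H-boundary} immediately.

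\textbf{Differentiating along $\Sigma$.} Because $\theta$ is constant on $\Sigma$, for $U\in T\Sigma$ we have
\[
D_U\nu=\sin\theta\,U+\cos\theta\,JU .
\]
Here $U$ is tangent to $M$ and $JU$ is normal to $M$, so the tangential and normal parts separate cleanly. The normal part, via the Gauss formula $D_U\nu=\nabla_U\nu+B(U,\nu)$, gives \eqref{eq:boundary-B-Unu}: $B(U,\nu)=\cos\theta\,JU$. The tangential part gives $\nabla_U\nu=\sin\theta\,U$, which is \eqref{eq:boundary-nabla-nu}.

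It remains to check that this tangential part has no component along $\nu$ itself. This holds because $\langle\nabla_U\nu,\nu\rangle=\tfrac12U|\nu|^2=0$, and indeed $\sin\theta\,U$ is orthogonal to $\nu$.

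The only point needing care is the bookkeeping that $JU$ is normal to $M$, not merely normal to $\Sigma$. This uses the Lagrangian condition on $M$, and it is exactly why the tangential and normal splitting is clean. I do not expect any genuine obstacle beyond this.
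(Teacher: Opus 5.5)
Your proposal is correct and follows essentially the same route as the paper. You differentiate the capillary identity along $\Sigma$ and split $D_U\nu=\sin\theta\,U+\cos\theta\,JU$ into tangential and normal parts using the Lagrangian condition. You then apply $J$ to the capillary identity, solve for $X$, take its normal part, and invoke the self-similar equation; the paper does the same, only in a different order, and your extra check that $\langle\nabla_U\nu,\nu\rangle=0$ is harmless but not needed.
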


\begin{proof}
Differentiate the capillary identity
$$
        \nu=\sin\theta\,X+\cos\theta\,JX
$$
in a tangential direction $U\in T\Sigma$.  Since $U(\theta)=0$ and $D_UX=U$,
$$
        D_U\nu=\sin\theta\,U+\cos\theta\,JU.
$$
Because $M$ is Lagrangian, $JU$ is normal to $M$.  Taking the tangential and normal components gives \eqref{eq:boundary-nabla-nu} and \eqref{eq:boundary-B-Unu}.

Apply $J$ to the capillary identity:
$$
        J\nu=\sin\theta\,JX-\cos\theta\,X.
$$
Solving this together with the original identity for $X$ gives \eqref{eq:X-boundary-decomposition}.  Since $\nu$ is tangent to $M$ and $J\nu$ is normal, the normal projection is \eqref{eq:Xperp-boundary}.  Equation \eqref{eq:intro-self-similar} then yields \eqref{eq:H-boundary}.
\end{proof}

The cubic symmetry gives the components of $B$ that are needed for the boundary mean-curvature calculation.

\begin{lemma}
For $U,V\in T\Sigma$,
\begin{equation}\label{eq:B-UV-Jnu}
        \langle B(U,V),J\nu\rangle
        =\cos\theta\,\langle U,V\rangle.
\end{equation}
For $U\in T\Sigma$,
\begin{equation}\label{eq:B-nunu-JU}
        \langle B(\nu,\nu),JU\rangle=0.
\end{equation}
\end{lemma}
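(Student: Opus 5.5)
Both identities follow from the total symmetry of the cubic form $C(U,V,W)=\langle B(U,V),JW\rangle$ combined with the boundary identity \eqref{eq:boundary-B-Unu}, $B(U,\nu)=\cos\theta\,JU$ for $U\in T\Sigma$. No differentiation beyond what was already done in the previous lemma is required.

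For \eqref{eq:B-UV-Jnu}, take $U,V\in T\Sigma$. By symmetry of $C$,
\[
        \langle B(U,V),J\nu\rangle
        =C(U,V,\nu)
        =C(U,\nu,V)
        =\langle B(U,\nu),JV\rangle .
\]
Substituting \eqref{eq:boundary-B-Unu} gives
\[
        \langle B(U,\nu),JV\rangle
        =\cos\theta\,\langle JU,JV\rangle
        =\cos\theta\,\langle U,V\rangle,
\]
since $J$ is an isometry.

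For \eqref{eq:B-nunu-JU}, take $U\in T\Sigma$ and again permute arguments of $C$:
\[
        \langle B(\nu,\nu),JU\rangle
        =C(\nu,\nu,U)
        =C(U,\nu,\nu)
        =\langle B(U,\nu),J\nu\rangle .
\]
By \eqref{eq:boundary-B-Unu} this equals $\cos\theta\,\langle JU,J\nu\rangle=\cos\theta\,\langle U,\nu\rangle$. This vanishes because $\nu$ is the unit conormal, so it is orthogonal to $T\Sigma$.

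There is no real obstacle here. The only point to check is that \eqref{eq:boundary-B-Unu} was derived using constancy of $\theta$ along $\Sigma$, which holds by the capillary assumption. The normal $J\nu$ is legitimately a normal vector of $M$, because $M$ is Lagrangian. The symmetry lemma applies pointwise along $\partial M$ since $X$ is smooth up to the boundary.
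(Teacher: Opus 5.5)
Your proposal is correct and follows the paper's proof essentially verbatim: both identities come from permuting the arguments of the totally symmetric cubic form $C$ and then substituting $B(U,\nu)=\cos\theta\,JU$, with the second identity vanishing because $U\perp\nu$. Your side remarks on the constancy of $\theta$ and on $J\nu$ being normal are accurate and consistent with how the paper sets up these boundary identities.
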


\begin{proof}
By the total symmetry of the cubic form and \eqref{eq:boundary-B-Unu},
$$
\begin{aligned}
        \langle B(U,V),J\nu\rangle
        &=\langle B(U,\nu),JV\rangle\\
        &=\cos\theta\,\langle JU,JV\rangle\\
        &=\cos\theta\,\langle U,V\rangle.
\end{aligned}
$$
This proves \eqref{eq:B-UV-Jnu}.  Similarly,
$$
\begin{aligned}
        \langle B(\nu,\nu),JU\rangle
        &=\langle B(\nu,U),J\nu\rangle\\
        &=\cos\theta\,\langle JU,J\nu\rangle
        \\&=0,
\end{aligned}
$$
because $U\perp\nu$.
\end{proof}

We now prove the high-dimensional extension of the two-dimensional great-circle lemma of Luo--Sun \cite{LuoSun2021} for minimal Lagrangian surfaces.  In dimension two, a connected one-dimensional minimal submanifold of the round sphere is a geodesic, so the result reduces exactly to the boundary great-circle statement.

\begin{theorem}\label{thm:boundary-minimal}
Let $X:M^n\to\C^n$ satisfy \eqref{eq:intro-self-similar}, and let $\Sigma\subset\partial M$ be a Legendrian capillary boundary component with constant contact angle.  Then the immersion
$$
        X|_\Sigma:\Sigma^{n-1}\longrightarrow\Sph^{2n-1}
$$
is minimal Legendrian.
\end{theorem}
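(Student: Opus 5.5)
The plan is to use the phase relation of Lemma~\ref{lem:phase-relation} together with the local identity \eqref{eq:d-beta-lambda}, and then apply Lemma~\ref{lem:legendrian-minimal-phase}. Minimality of $X|_\Sigma$ in $\Sph^{2n-1}$ is a local property, and so is the constancy of a Legendrian angle. So we may work on a small simply connected neighborhood $\mathcal U\subset M$ of an arbitrary point of $\Sigma$, which is orientable. This removes any global orientability or Maslov-class issue. The Legendrian property of $X|_\Sigma$ holds by hypothesis, so only minimality needs proof.

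On $\mathcal U$ we fix an orientation of $M$. We orient $\Sigma\cap\mathcal U$ so that $(e_1,\ldots,e_{n-1},\nu)$ is an oriented frame, as required in Lemma~\ref{lem:phase-relation}. We then choose real-valued lifts $\beta_M$ on $\mathcal U$ and $\beta_\Sigma$ on $\Sigma\cap\mathcal U$. By \eqref{eq:d-phase-relation} and the constancy of $\theta$ on $\Sigma$,
\[
        d\beta_\Sigma=d\beta_M|_{T\Sigma}+d\theta=d\beta_M|_{T\Sigma}.
\]
On the other hand, \eqref{eq:d-beta-lambda} gives $d\beta_M=\varepsilon\lambda$ on $\mathcal U$. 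Since $X(\Sigma)\subset\Sph^{2n-1}$ is Legendrian, \eqref{eq:lambda-boundary-zero} gives $\lambda|_{T\Sigma}=0$. Hence $d\beta_\Sigma=0$, so the Legendrian angle of $X|_{\Sigma\cap\mathcal U}$ is locally constant. Lemma~\ref{lem:legendrian-minimal-phase}, (i)$\Rightarrow$(iii), applied locally to the connected Legendrian piece $\Sigma\cap\mathcal U$, shows that $X|_{\Sigma\cap\mathcal U}$ is minimal in $\Sph^{2n-1}$. Since the point was arbitrary, $X|_\Sigma$ is minimal Legendrian.

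The step needing the most care is applying \eqref{eq:d-beta-lambda} up to the boundary. That identity was derived from $H=J\nabla\beta_M$ and \eqref{eq:alphaH-lambda}, which are pointwise identities valid on the closed manifold by smoothness of $X$ up to $\partial M$. So it holds on a boundary neighborhood $\mathcal U$, and restricting to $T\Sigma$ is legitimate. One must also check that the orientation conventions in Lemma~\ref{lem:phase-relation} are consistent with those used in defining $\beta_\Sigma$ in Lemma~\ref{lem:legendrian-minimal-phase}; a different choice changes $\beta_\Sigma$ only by a constant or a sign, which does not affect local constancy.

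As a cross-check, one can compute directly. The mean curvature of $\Sigma$ in $\C^n$ is $\sum_a\bigl(\nabla_{e_a}e_a\bigr)^{\perp_\Sigma}+\sum_aB(e_a,e_a)$. Using \eqref{eq:boundary-nabla-nu}, \eqref{eq:B-UV-Jnu}, \eqref{eq:B-nunu-JU}, \eqref{eq:H-boundary} and \eqref{eq:X-boundary-decomposition}, its components along $JU$ for $U\in T\Sigma$ are $-\langle B(\nu,\nu),JU\rangle=0$. Its components along $\nu$ and $J\nu$ combine into a multiple of $X$, which is exactly the spherical normal contribution. The phase argument above is shorter, so we present that one.
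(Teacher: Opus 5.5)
Your argument is correct, but it is not the paper's main proof. It is exactly the alternative the paper records in the Remark immediately after Theorem~\ref{thm:boundary-minimal}. The proof the paper actually gives is the direct tensor computation that your ``cross-check'' sketches.

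The paper works in the spherical normal frame \eqref{eq:spherical-normal-frame}. The $JX$-components of $B^\Sigma_{\Sph}$ vanish by differentiating $\langle e_b,JX\rangle=0$ along $\Sigma$. The $Je_c$-components of the trace reduce to $\langle H-B(\nu,\nu),Je_c\rangle$, which vanishes by \eqref{eq:H-boundary} and \eqref{eq:B-nunu-JU}. Your cross-check instead computes the full Euclidean mean curvature of $\Sigma$. Its $\nu$- and $J\nu$-components are $-(n-1)\sin\theta$ and $(n-1)\cos\theta$ by \eqref{eq:boundary-nabla-nu} and \eqref{eq:B-UV-Jnu}, so it equals $-(n-1)X$ by \eqref{eq:X-boundary-decomposition}. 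That is the same computation, organized differently.

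The tensor proof is pointwise and self-contained. It uses only the capillary boundary identities and cubic-form symmetry, with no Lagrangian angle, no formula $H=J\nabla\beta_M$, and no special Lagrangian cone characterization from Lemma~\ref{lem:legendrian-minimal-phase}. The paper prefers it because it ``requires no orientability.'' Your localization to an orientable half-ball $\mathcal U$, with $\Sigma\cap\mathcal U$ a disk, shows this objection is only apparent. Minimality, local constancy of $\beta_\Sigma$, and all three equivalences in Lemma~\ref{lem:legendrian-minimal-phase} are local. Also, $\alpha_H=-\varepsilon\lambda$ and $H=J\nabla\beta_M$ are pointwise identities that hold up to $\partial M$.

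Your route is shorter and gives a conceptual reason for the result. The form $d\beta_M=\varepsilon\lambda$ vanishes on a Legendrian boundary, and a constant contact angle passes this to $d\beta_\Sigma=0$ through \eqref{eq:d-phase-relation}. The cost is dependence on the phase conventions of Lemma~\ref{lem:phase-relation}. As you note, these change $\beta_\Sigma$ only by an additive constant, which does not affect local constancy.
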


\begin{proof}
The boundary is Legendrian by assumption.  It remains to prove minimality in the unit sphere.  Choose a local orthonormal frame
$$
        e_1,\ldots,e_{n-1}
$$
of $T\Sigma$.  The normal space of $\Sigma$ inside $\Sph^{2n-1}$ is spanned by
\begin{equation}\label{eq:spherical-normal-frame}
        JX,
        \qquad
        Je_1,\ldots,Je_{n-1}.
\end{equation}
Indeed, all these vectors are tangent to the sphere and orthogonal to $T\Sigma$.  They are orthonormal, and their number equals the codimension of $\Sigma$ in $\Sph^{2n-1}$.

Let $B^\Sigma_{\Sph}$ denote the second fundamental form of $\Sigma$ in the unit sphere and $H^\Sigma_{\Sph}$ its trace.  We compute the components of $H^\Sigma_{\Sph}$ in the frame \eqref{eq:spherical-normal-frame}.

First consider the $JX$-component.  Since $\Sigma$ is Legendrian,
$$
        \langle e_b,JX\rangle=0.
$$
Differentiating in the direction $e_a$ gives
$$
        0
        =\langle D_{e_a}e_b,JX\rangle
          +\langle e_b,Je_a\rangle.
$$
The second term vanishes because $M$ is Lagrangian.  Hence
$$
        \langle D_{e_a}e_b,JX\rangle=0.
$$
The radial second fundamental form of the sphere is a multiple of $X$, which is orthogonal to $JX$.  Therefore
\begin{equation}\label{eq:spherical-B-JX}
        \langle B^\Sigma_{\Sph}(e_a,e_b),JX\rangle=0.
\end{equation}

Next fix $c\in\{1,\ldots,n-1\}$.  Since $Je_c$ is normal to $M$,
$$
        \langle B^\Sigma_{\Sph}(e_a,e_a),Je_c\rangle
        =\langle B(e_a,e_a),Je_c\rangle.
$$
Taking the trace and using \eqref{eq:H-boundary},
$$
\begin{aligned}
        \sum_{a=1}^{n-1}
        \langle B(e_a,e_a),Je_c\rangle
        &=\langle H-B(\nu,\nu),Je_c\rangle\\
        &=\varepsilon\cos\theta\,\langle J\nu,Je_c\rangle
          -\langle B(\nu,\nu),Je_c\rangle.
\end{aligned}
$$
The first term is zero because $\nu\perp e_c$, and the second is zero by \eqref{eq:B-nunu-JU}.  Thus
\begin{equation}\label{eq:spherical-H-Jec}
        \langle H^\Sigma_{\Sph},Je_c\rangle=0
        \qquad\text{for every }c.
\end{equation}
Equations \eqref{eq:spherical-B-JX} and \eqref{eq:spherical-H-Jec} show that every component of $H^\Sigma_{\Sph}$ vanishes.  Hence $\Sigma$ is minimal in $\Sph^{2n-1}$.
\end{proof}

\begin{remark}
When orientations are available, Theorem~\ref{thm:boundary-minimal} also follows immediately from Lemma~\ref{lem:phase-relation}.  On $T\Sigma$, the self-similar submanifold phase identity \eqref{eq:d-beta-lambda} and the Legendrian condition \eqref{eq:lambda-boundary-zero} give $d\beta_M=0$.  Since $d\theta=0$, equation \eqref{eq:d-phase-relation} gives $d\beta_\Sigma=0$.  Lemma~\ref{lem:legendrian-minimal-phase} then implies minimality.  The tensor proof above is preferable for the main theorem because it is local and requires no orientability.
\end{remark}

\begin{corollary}
If $n=2$, every boundary component is a possibly multiply covered Legendrian great circle in $\Sph^3$.
\end{corollary}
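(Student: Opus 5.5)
The plan is to apply Theorem~\ref{thm:boundary-minimal} with $n=2$, so that the boundary component $\Sigma$ has dimension one. Let $\Sigma$ be a connected component of $\partial M$. It is a compact connected one-dimensional manifold without boundary, hence diffeomorphic to $S^1$. Theorem~\ref{thm:boundary-minimal} says that $X|_\Sigma:\Sigma\to\Sph^3$ is a minimal Legendrian immersion. A one-dimensional immersed submanifold is minimal exactly when it is a geodesic: its mean curvature vector in $\Sph^3$ is the geodesic curvature vector $\nabla^{\Sph^3}_TT$ of the unit tangent $T$. Parametrizing $\Sigma$ by arclength, $c:\R/\ell\mathbb Z\to\Sph^3$ therefore satisfies $c''=-c$ in $\C^2=\R^4$. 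Its solutions are $c(t)=\cos t\,a+\sin t\,b$ with $a,b$ orthonormal, so the image lies in the great circle $\Sph^3\cap\operatorname{span}_\R\{a,b\}$.

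Next, use closedness to fix the period. Since $c$ is $\ell$-periodic with unit speed along the great circle, $\ell=2\pi m$ for a positive integer $m$, and $c$ is an $m$-fold cover of that circle.

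It remains to check that this great circle is Legendrian, i.e.\ that the plane $\operatorname{span}_\R\{a,b\}$ is Lagrangian. The Legendrian condition is $\langle c',Jc\rangle=0$. At $t=0$ this gives $\langle b,Ja\rangle=0$, i.e.\ $\omega(a,b)=0$, so the plane spanned by $a,b$ is a Lagrangian $2$-plane. Because the condition holds at every $t$ and $\omega$ is bilinear, no further constraint arises. After a unitary change of coordinates we may take $a=(1,0)$ and $b=(0,1)$, so the circle is $t\mapsto(\cos t,\sin t)$.

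The only point needing care is the identification of minimality with the geodesic equation for curves, and that is routine. I expect no real obstacle beyond invoking Theorem~\ref{thm:boundary-minimal}.
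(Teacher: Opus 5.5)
Your proposal is correct and follows the paper's argument: Theorem~\ref{thm:boundary-minimal} makes each boundary curve a closed geodesic of $\Sph^3$, hence a great circle covered finitely many times. You add the details the paper leaves implicit: solving $c''=-c$, fixing the period $2\pi m$, and checking that the spanning plane is Lagrangian.
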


\begin{proof}
By Theorem~\ref{thm:boundary-minimal}, the boundary curve is a one-dimensional minimal submanifold of the round sphere.  Hence it is a geodesic.  Every closed geodesic in $\Sph^3$ is a great circle, possibly covered finitely many times.
\end{proof}

\section{Local product structure and intrinsic splitting}\label{sec:rank-one}

This section has three purposes. We first derive the higher-dimensional Anciaux equations for $H+\varepsilon X^\perp=0$. Using the minimal Legendrian geometry of a boundary component  and one-sided Cauchy uniqueness, we then identify the immersion \(X\) with the corresponding product model on a boundary collar. The resulting rank-one identity extends to \(M^\circ\) by real analyticity  and to $M$ by continuity.  Finally, we show intrinsically that, wherever the associated line field is nondegenerate, this identity forces the local Anciaux splitting and its profile equation.

\subsection{Product models and capillary data}
\label{subsec:product-models}

Let \(\psi:N^{n-1}\longrightarrow\Sph^{2n-1}\) be a Legendrian immersion, and let
\(\gamma:I\longrightarrow\C^*\) be a planar curve parametrized by arclength. Write
\(\gamma=r\ee^{i\phi}\). Since \(|\gamma'|=1\), there exists a locally defined
tangent angle \(\alpha\) such that \(\gamma'=\ee^{i\alpha}\).

Differentiating \(\gamma=r\ee^{i\phi}\), we obtain
\[
\gamma'=\ee^{i\phi}\bigl(r'+ir\phi'\bigr).
\]
It follows that
\[
r'+ir\phi'
=\ee^{-i\phi}\gamma'
=\ee^{i(\alpha-\phi)}.
\]
Set \(\Theta:=\alpha-\phi\). Then
\[
r'+ir\phi'
=\ee^{i\Theta}
=\cos\Theta+i\sin\Theta.
\]
Comparing the real and imaginary parts gives
\begin{equation}\label{eq:polar-basic}
r'=\cos\Theta,
\qquad
r\phi'=\sin\Theta.
\end{equation}
Thus \(\Theta\) is the oriented angle from the radial direction
\(\ee^{i\phi}\) to the unit tangent vector \(\gamma'\).

Define
\begin{equation}\label{eq:anciaux-product}
\begin{aligned}
        F:I\times N&\longrightarrow\C^n,\\
        (s,p)&\longmapsto\gamma(s)\psi(p).
\end{aligned}
\end{equation}
The following proposition is the self-similar product calculation underlying Anciaux's construction \cite{Anciaux2006}.  We include the complete calculation because it will later be recovered intrinsically from the rank-one identity.

\begin{proposition}\label{prop:anciaux-product}
Suppose that $\psi:N^{n-1}\to\Sph^{2n-1}$ is a minimal Legendrian immersion.  Then $F$ in \eqref{eq:anciaux-product} is Lagrangian.  Its induced metric is
\begin{equation}\label{eq:product-metric}
        g_F=ds^2+r(s)^2g_N.
\end{equation}
Moreover, $F$ satisfies
$$
        H_F+\varepsilon F^\perp=0
$$
if and only if
\begin{equation}\label{eq:anciaux-system}
        r'=\cos\Theta,
        \qquad
        \Theta'=\left(\varepsilon r-\frac nr\right)\sin\Theta.
\end{equation}
The system has the first integral
\begin{equation}\label{eq:anciaux-first-integral}
        E=r^n\ee^{-\varepsilon r^2/2}\sin\Theta.
\end{equation}
\end{proposition}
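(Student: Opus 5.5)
The plan is a direct computation in an adapted frame. Fix $p\in N$ and a local orthonormal frame $e_1,\dots,e_{n-1}$ of $(N,g_N)$, and write $E_a=\psi_*e_a$. The derivatives of $F$ are
\[
        F_s=\gamma'\psi=\ee^{i\alpha}\psi,
        \qquad
        F_*e_a=\gamma\,E_a .
\]
Since $\psi$ is Legendrian, $\langle\psi,E_a\rangle=0$ and $\langle J\psi,E_a\rangle=0$. Because $\psi\in\Sph^{2n-1}$, also $|\psi|=1$. Multiplication by a complex scalar preserves $\omega$ up to the factor $|c|^2$ on $\mathrm{span}_\C$. Together with the Lagrangian property of the cone over $\psi$ (Lemma~\ref{lem:legendrian-minimal-phase}), these facts give $\omega(F_s,F_*e_a)=0$ and $\omega(F_*e_a,F_*e_b)=0$, so $F$ is Lagrangian. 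The same relations give $|F_s|=1$, $\langle F_s,F_*e_a\rangle=0$ and $\langle F_*e_a,F_*e_b\rangle=r^2\delta_{ab}$, which is \eqref{eq:product-metric}.

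For the self-similar equation I will use the Lagrangian angle rather than the second fundamental form. The relevant identities are \eqref{eq:phase-mean-curvature} and \eqref{eq:alphaH-lambda}, and I will check them on the product; equivalently $H=J\nabla\beta_F$ and $F^\perp$ have to be compared. Take the oriented orthonormal frame $(F_s,\ \ee^{i\phi}E_1,\dots,\ee^{i\phi}E_{n-1})$. By complex multilinearity,
\[
        \Omega=\ee^{i\alpha}\ee^{i(n-1)\phi}\,\Omega(\psi,E_1,\dots,E_{n-1})
        =\ee^{i(\alpha+(n-1)\phi+\beta_N)} .
\]
Minimality of $\psi$ makes $\beta_N$ locally constant by Lemma~\ref{lem:legendrian-minimal-phase}. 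Hence $\beta_F=\alpha+(n-1)\phi+\mathrm{const}$ depends only on $s$, and $\nabla\beta_F=(\alpha'+(n-1)\phi')\partial_s$. So $H=(\alpha'+(n-1)\phi')\,JF_s$.

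Next compute $F^\perp$. The vector $F=\gamma\psi=r\ee^{i\phi}\psi$ lies in $\mathrm{span}_\R\{\ee^{i\alpha}\psi,\ i\ee^{i\alpha}\psi\}$. Its component along $F_s$ is $r\cos(\phi-\alpha)$, and its component along $JF_s=i\ee^{i\alpha}\psi$ is $r\sin(\phi-\alpha)=-r\sin\Theta$. The vector $i\ee^{i\alpha}\psi$ is normal to $F$, being $J$ of a tangent vector. Therefore
\[
        F^\perp=-r\sin\Theta\,JF_s .
\]
The equation $H_F+\varepsilon F^\perp=0$ is thus equivalent to the scalar ODE
\[
        \alpha'+(n-1)\phi'=\varepsilon r\sin\Theta .
\]
Using $\Theta'=\alpha'-\phi'$ and $\phi'=\sin\Theta/r$ from \eqref{eq:polar-basic}, this becomes
\[
        \Theta'=\varepsilon r\sin\Theta-n\phi'=\Bigl(\varepsilon r-\frac nr\Bigr)\sin\Theta .
\]
Together with $r'=\cos\Theta$, which holds identically for an arclength curve, this is \eqref{eq:anciaux-system}. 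Both directions of the equivalence come from the same scalar identity, since $JF_s$ spans the normal component involved and all other normal components of $H$ and $F^\perp$ vanish.

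For the first integral, differentiate $\log E=n\log r-\varepsilon r^2/2+\log\sin\Theta$ where $\sin\Theta\ne0$:
\[
        \Bigl(\frac nr-\varepsilon r\Bigr)\cos\Theta+\cot\Theta\,\Theta'=0 .
\]
To avoid dividing by $\sin\Theta$, I will instead differentiate $E$ directly, which gives $E'=r^n\ee^{-\varepsilon r^2/2}\bigl[(n/r-\varepsilon r)\cos\Theta\sin\Theta+\cos\Theta\,\Theta'\bigr]=0$.

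The main obstacle is the orientation and local-lift issue in the phase computation; nonorientable $N$ only requires working locally. One must also ensure that the sign convention $H=J\nabla\beta$ stated in the paper matches the sign of $F^\perp$. I will double-check this sign with the round sphere $\gamma=\ee^{i s}$, $\varepsilon=1$, $r\equiv\sqrt n$: there $\Theta=\pi/2$ and $\Theta'=0$ requires $\varepsilon r=n/r$, which is consistent with \eqref{eq:anciaux-system}.
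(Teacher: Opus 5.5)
Your proposal is correct. The Lagrangian and metric parts, the formula for $F^\perp$, the reduction to $\Theta'$, and the first integral all match the paper. The difference is how you compute $H_F$.

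\textbf{The paper's route.} It uses the second fundamental form. From the profile curve it gets $D_{e_0}e_0=\alpha' Je_0$. For the link directions it uses $D^{\C^n}_{v_a}(\psi_*v_a)=\nabla^{\Sph}_{v_a}(\psi_*v_a)-\psi$. Minimality of $\psi$ then does two things: it kills the $Je_a$-components of the trace, and it reduces the trace to $-\frac{n-1}{r}\ee^{i\phi}\psi$. The normal part of that vector is $\frac{n-1}{r}\sin\Theta\,Je_0$.

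\textbf{Your route.} You compute the Lagrangian angle $\beta_F=\alpha+(n-1)\phi+\beta_N$ and make $\beta_N$ locally constant via Lemma~\ref{lem:legendrian-minimal-phase}. You then read off $H_F=(\alpha'+(n-1)\phi')JF_s$ from \eqref{eq:phase-mean-curvature}. Since $\phi'=\sin\Theta/r$, this is the same vector the paper obtains.

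\textbf{What each approach buys.} Your argument is shorter. It also explains at once why $H_F$ has no component along $J$ of the link directions: the phase depends on $s$ alone. The cost is that it rests on three inputs:
\begin{enumerate}
\item the sign convention $H=J\nabla\beta$ for $\Omega(e_1,\dots,e_n)=\ee^{i\beta}$, which you rightly flag (the case $n=1$, where $\gamma''=i\alpha'\gamma'=J\nabla\alpha$, confirms it);
\item local orientations;
\item Lemma~\ref{lem:legendrian-minimal-phase}, whose equivalence of minimality with constant phase itself goes through the cone mean-curvature relation.
\end{enumerate}
The paper's tensor computation needs no orientation and shows exactly where minimality of the link enters. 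It also parallels the later intrinsic derivation of $\kappa=b(\varepsilon-(n-1)/r^2)$ and $H=\varepsilon bJe_0$ in Section~\ref{sec:rank-one}, which is why the paper spells it out.

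\textbf{Two small slips.}
\begin{enumerate}
\item In your sanity check the profile with $r\equiv\sqrt n$ is $\gamma=\sqrt n\,\ee^{is/\sqrt n}$, not $\ee^{is}$.
\item The vanishing of $\omega(F_s,F_*e_a)$ does not follow from the $|c|^2$-scaling remark, because the two scalars $\ee^{i\alpha}$ and $\gamma$ differ. It follows from full Hermitian orthogonality of $\psi$ and $E_a$: $\langle\psi,E_a\rangle=0$ from the sphere and $\langle J\psi,E_a\rangle=0$ from the Legendrian condition. You do list both facts, so this is only a matter of phrasing.
\end{enumerate}
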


\begin{proof}
Let $v,w\in T_pN$. The tangent vectors of $F$ are
\begin{equation}\label{eq:product-tangent-vectors}
F_s=\gamma'\psi,
\qquad
F_*v=\gamma\psi_*v.
\end{equation}
Since $\psi$ takes values in the sphere,
$\langle\psi,\psi_*v\rangle=0$.
Since it is Legendrian,
$\langle J\psi,\psi_*v\rangle=0$
and $\omega(\psi_*v,\psi_*w)=0$.
It follows from \eqref{eq:product-tangent-vectors} that
$\omega(F_s,F_*v)=0$ and
$\omega(F_*v,F_*w)=0$.
Thus $F$ is Lagrangian. The same orthogonality relations give
$|F_s|=1$, $\langle F_s,F_*v\rangle=0$, and
$\langle F_*v,F_*w\rangle=r^2\langle v,w\rangle$,
which proves \eqref{eq:product-metric}.

Choose a local orthonormal frame $v_1,\ldots,v_{n-1}$ on $N$, geodesic at the point under consideration, and set
\begin{equation*}
e_0=F_s=\ee^{i\alpha}\psi,
\qquad
e_a=\frac1rF_*v_a=\ee^{i\phi}\psi_*v_a.
\end{equation*}
This is an orthonormal tangent frame for $F$.
We first compute the $e_0$-direction. Since $\gamma'=\ee^{i\alpha}$,
we have
\begin{equation}\label{eq:D-e0-e0-product}
D_{e_0}e_0=\alpha'Je_0.
\end{equation}

For the directions tangent to $N$, note that the Euclidean and spherical connections are related by
\[
D^{\C^n}_{v_a}(\psi_*v_a)
=\nabla^{\Sph}_{v_a}(\psi_*v_a)-\psi
\]
after choosing $\nabla^N_{v_a}v_a=0$ at the point. Taking the trace in $a$, minimality of $\psi$ in the sphere gives
\begin{equation}
\sum_{a=1}^{n-1}D^{\C^n}_{v_a}(\psi_*v_a)
=-(n-1)\psi.
\end{equation}
Because differentiation in the unit vector direction $e_a$ introduces a factor $1/r$,
\begin{equation}\label{eq:trace-N-directions}
\sum_{a=1}^{n-1}D_{e_a}e_a
=-\frac{n-1}{r}\ee^{i\phi}\psi.
\end{equation}
The vector $\ee^{i\phi}\psi$ lies in the real two-plane generated by $e_0$ and $Je_0$.
From $e_0=\ee^{i(\phi+\Theta)}\psi$,
\begin{equation}
\ee^{i\phi}\psi
=\cos\Theta\,e_0-\sin\Theta\,Je_0.
\end{equation}
Thus the normal component of \eqref{eq:trace-N-directions} is
\begin{equation}\label{eq:normal-trace-N}
\left(\sum_{a=1}^{n-1}D_{e_a}e_a\right)^\perp
=\frac{n-1}{r}\sin\Theta\,Je_0.
\end{equation}

The trace has no component in the normal directions $Je_a$: such components are precisely the spherical mean-curvature components of the link, which vanish by minimality.
Combining \eqref{eq:D-e0-e0-product} and \eqref{eq:normal-trace-N},
\begin{equation*}
H_F
=\left(\alpha'+\frac{n-1}{r}\sin\Theta\right)Je_0.
\end{equation*}
On the other hand,
$F=r\ee^{i\phi}\psi
=r\cos\Theta\,e_0-r\sin\Theta\,Je_0$,
so
\begin{equation*}
F^\perp=-r\sin\Theta\,Je_0.
\end{equation*}
Therefore $H_F+\varepsilon F^\perp=0$ is equivalent to
\begin{equation}\label{eq:alpha-prime-product}
\alpha'
=\left(\varepsilon r-\frac{n-1}{r}\right)\sin\Theta.
\end{equation}
Subtracting $\phi'=r^{-1}\sin\Theta$ from \eqref{eq:alpha-prime-product} gives the second equation in \eqref{eq:anciaux-system}, and the first is already contained in \eqref{eq:polar-basic}.

Finally, differentiate
$E=r^n\ee^{-\varepsilon r^2/2}\sin\Theta$.
Using \eqref{eq:anciaux-system},
\begin{align*}
E'
&=\ee^{-\varepsilon r^2/2}
\left((nr^{n-1}-\varepsilon r^{n+1})r'\sin\Theta
+r^n\cos\Theta\,\Theta'\right)\\
&=r^{n-1}\ee^{-\varepsilon r^2/2}\cos\Theta\sin\Theta
\left(n-\varepsilon r^2+\varepsilon r^2-n\right)\\
&=0.
\end{align*}
No division by $\sin\Theta$ is used, so the identity remains valid at its zeros.
\end{proof}

\begin{proposition}\label{prop:product-boundary-angles}
Let $F(s,p)=\gamma(s)\psi(p)$ satisfy Proposition~\ref{prop:anciaux-product} on a compact interval $[s_0,s_1]$, and assume that $N$ is connected.  Suppose
\begin{equation}\label{eq:product-unit-ball-segment}
        r(s_0)=r(s_1)=1,
        \qquad
        0<r(s)<1\quad(s_0<s<s_1).
\end{equation}
Then the two boundary components
\[
        \{s_0\}\times N,
        \qquad
        \{s_1\}\times N
\]
of the parameter manifold are mapped by $F$ to constant-angle Legendrian capillary boundary immersions.  With outward conormal $-F_s$ at $s_0$ and $F_s$ at $s_1$, the contact angles satisfy
\begin{equation}\label{eq:product-angle-components}
\begin{aligned}
        \sin\theta_0&=-\cos\Theta(s_0),
        &\qquad \cos\theta_0&=-\sin\Theta(s_0),\\
        \sin\theta_1&= \cos\Theta(s_1),
        &\qquad \cos\theta_1&= \sin\Theta(s_1).
\end{aligned}
\end{equation}
In particular,
\begin{equation}\label{eq:product-supplementary}
        \cos\theta_0+\cos\theta_1=0,
        \qquad
        \theta_0+\theta_1=\pi.
\end{equation}
\end{proposition}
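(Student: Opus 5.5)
The proof is a direct computation on the product model of Proposition~\ref{prop:anciaux-product}; I will verify the capillary identity \eqref{eq:intro-capillary} on each boundary slice and read off the angles.

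\emph{Boundary immersions.} On $\{s_j\}\times N$ the boundary map is $p\mapsto\gamma(s_j)\psi(p)$. Since $|\gamma(s_j)|=r(s_j)=1$, we have $\gamma(s_j)=\ee^{i\phi(s_j)}$, a unit complex scalar. Thus the boundary immersion is the unitary image $\ee^{i\phi(s_j)}\psi$ of $\psi$. Multiplication by a unit complex scalar preserves the sphere, the contact form, and minimality. Hence each boundary immersion lies in $\Sph^{2n-1}$ and is minimal Legendrian.

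\emph{Outward conormal.} The metric \eqref{eq:product-metric} is $ds^2+r^2g_N$, so $F_s$ is a unit vector orthogonal to the slices. Because the parameter domain is $[s_0,s_1]\times N$, the outward unit conormal is $-F_s$ at $s_0$ and $F_s$ at $s_1$.

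\emph{Capillary decomposition.} From $e_0=F_s=\ee^{i(\phi+\Theta)}\psi$ and $F=\ee^{i\phi}\psi$ at $r=1$, we get
\[
F_s=\ee^{i\Theta}F=\cos\Theta\,F+\sin\Theta\,JF .
\]
Comparing $\nu=\pm F_s$ with $\nu=\sin\theta\,X+\cos\theta\,JX$ gives
\begin{itemize}
\item at $s_1$: $\sin\theta_1=\cos\Theta(s_1)$ and $\cos\theta_1=\sin\Theta(s_1)$;
\item at $s_0$: $\sin\theta_0=-\cos\Theta(s_0)$ and $\cos\theta_0=-\sin\Theta(s_0)$.
\end{itemize}
This is \eqref{eq:product-angle-components}. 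The angles are constant on each component because $\Theta$ depends only on $s$, and $N$ is connected. So both boundary components are Legendrian capillary.

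\emph{Supplementary angles.} Evaluate the first integral \eqref{eq:anciaux-first-integral} at $r=1$:
\[
\sin\Theta(s_0)=\ee^{\varepsilon/2}E=\sin\Theta(s_1).
\]
Hence $\cos\theta_0=-\cos\theta_1$, which gives $\cos\theta_0+\cos\theta_1=0$.

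Both $\sin\theta_j$ must be positive, and this comes from the radial behaviour. Since $0<r<1$ on the interior and $r=1$ at the endpoints:
\begin{itemize}
\item $r'(s_0)\le0$, so $\cos\Theta(s_0)\le0$;
\item $r'(s_1)\ge0$, so $\cos\Theta(s_1)\ge0$.
\end{itemize}
Both inequalities are strict by the Hopf-lemma remark from the introduction, which gives $\sin\theta=\partial_\nu\rho>0$. That remark applies because $F$ is a solution in the closed ball with boundary on the sphere. Alternatively, strictness follows from ODE uniqueness: if $r'=0$ at an endpoint where $r=1$, then $\Theta\equiv\pm\pi/2$ there. The equation then gives $r''=-\sin\Theta\,\Theta'$ with sign determined by $\varepsilon-n<0$, and this contradicts $r<1$ nearby.

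So $\theta_0,\theta_1\in(0,\pi)$ with $\cos\theta_0=-\cos\theta_1$. Since cosine is injective on $(0,\pi)$, we conclude $\theta_0=\pi-\theta_1$.

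The only delicate step is this positivity of $\sin\theta_j$, which selects the representatives in $(0,\pi)$. I expect the cited Hopf-lemma argument to handle it cleanly.
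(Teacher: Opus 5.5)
Your proposal is correct and follows the paper's proof essentially step by step: you use the decomposition $F_s=\ee^{i\Theta}F$ at $r=1$, read off the angles from $\nu=\mp F_s$, use the first integral to force $\sin\Theta(s_0)=\sin\Theta(s_1)$, and conclude by injectivity of cosine on $(0,\pi)$. For the strict sign of $r'$ at the endpoints, your fallback argument ($|\sin\Theta|=1$ gives $r''=n-\varepsilon>0$, contradicting $r<1$ inside) is exactly the paper's, though it is a second-derivative test rather than ODE uniqueness; your Hopf-lemma route is also valid, since $\Delta\rho\ge n-1>0$ and $\rho<1/2$ on the interior by hypothesis.
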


\begin{proof}
At an endpoint $s=s_j$, the position vector has length one and
$$
        F_s
        =\ee^{i\alpha}\psi
        =\ee^{i\Theta}F
        =\cos\Theta\,F+\sin\Theta\,JF.
$$
The boundary immersion $p\mapsto\ee^{i\phi(s_j)}\psi(p)$ is Legendrian because
unitary multiplication preserves the standard contact structure.  We first
check the signs of the radial derivatives.  Since $r(s)<1$ immediately to the
right of $s_0$, one has $r'(s_0)\le0$.  Equality is impossible: if
$r'(s_0)=0$, then $|\sin\Theta(s_0)|=1$ and the ODE gives
$$
        r''(s_0)=\frac n{r(s_0)}-\varepsilon r(s_0)=n-\varepsilon>0,
$$
so $r(s)>1$ for $s>s_0$ sufficiently small, contradicting
\eqref{eq:product-unit-ball-segment}.  Thus $r'(s_0)<0$.
Similarly, $r'(s_1)>0$.  Consequently the
coefficients of $F$ in the outward conormals $-F_s$ and $F_s$ are positive,
so they determine unique contact angles in $(0,\pi)$.  Comparing these
conormals with $\nu=\sin\theta F+\cos\theta JF$ gives
\eqref{eq:product-angle-components}.

At both endpoints, the first integral \eqref{eq:anciaux-first-integral} gives
$$
        \ee^{-\varepsilon/2}\sin\Theta(s_0)
        =E
        =\ee^{-\varepsilon/2}\sin\Theta(s_1).
$$
Hence $$\sin\Theta(s_0)=\sin\Theta(s_1),$$ and the cosine formulas in
\eqref{eq:product-angle-components} yield
$$\cos\theta_0+\cos\theta_1=0.$$  Since
$\theta_0,\theta_1\in(0,\pi)$ and $\cos$ is strictly decreasing on
$[0,\pi]$, this is equivalent to $\theta_1=\pi-\theta_0$.  This proves
\eqref{eq:product-supplementary}.
\end{proof}

\subsection{Boundary collars and analytic continuation}

Fix a connected boundary component $\Sigma_0\subset\partial M$, and let $\theta_0$ be its contact angle.  By Theorem~\ref{thm:boundary-minimal},
\begin{equation}
        \psi=X|_{\Sigma_0}:\Sigma_0\longrightarrow\Sph^{2n-1}
\end{equation}
is a compact minimal Legendrian immersion.

Choose the inward unit conormal
$$
        e_0^{\mathrm{in}}=-\nu.
$$
Along $\Sigma_0$,
\begin{equation}\label{eq:initial-conormal}
        e_0^{\mathrm{in}}
        =-\sin\theta_0\,X-\cos\theta_0\,JX.
\end{equation}
Set
\begin{equation}\label{eq:anciaux-initial-polar-data}
        r(0)=1,
        \qquad
        \Theta(0)=\frac{3\pi}{2}-\theta_0,
        \qquad
        \phi(0)=0.
\end{equation}
Then
$\cos\Theta(0)=-\sin\theta_0$ and
$\sin\Theta(0)=-\cos\theta_0$.  Solve the smooth system
\begin{equation*}
        r'=\cos\Theta,
        \qquad
        \Theta'=\left(\varepsilon r-\frac nr\right)\sin\Theta,
        \qquad
        \phi'=\frac{\sin\Theta}{r}
\end{equation*}
with the data \eqref{eq:anciaux-initial-polar-data}, and put
$\gamma=r\ee^{i\phi}$.  The right-hand side is smooth on $r>0$, so the
classical local existence, uniqueness, and smooth-dependence theorem
\cite[Chapter~1]{CoddingtonLevinson1955} gives a unique solution on
$[0,\delta)$ for some $\delta>0$.  It satisfies
\begin{equation}\label{eq:anciaux-initial-data}
        \gamma(0)=1,
        \qquad
        \gamma'(0)=-(\sin\theta_0+i\cos\theta_0).
\end{equation}

\begin{lemma}\label{lem:anciaux-collar}
There exist $\delta>0$ and a collar diffeomorphism
$$
        \Psi:[0,\delta)\times\Sigma_0\longrightarrow U\subset M,
$$
with $\Psi(0,p)=p$ such that
\begin{equation}\label{eq:collar-product}
        X\circ\Psi(s,p)=\gamma(s)\psi(p),
\end{equation}
where $\gamma$ is the self-similar curve determined by \eqref{eq:anciaux-initial-data}.
\end{lemma}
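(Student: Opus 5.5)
The plan is to apply the one-sided Cauchy uniqueness Lemma~\ref{lem:cauchy} with $X_1=X$ on $M_1=M$ and $\Sigma_1=\Sigma_0$, and with the Anciaux product model $X_2=F$ on $M_2=[0,\delta)\times\Sigma_0$, $F(s,p)=\gamma(s)\psi(p)$. Here $\Sigma_2=\{0\}\times\Sigma_0$ and $F:\Sigma_1\to\Sigma_2$ is the identity map $p\mapsto(0,p)$.

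First I verify that $X_2$ is a legitimate competitor. Since $\psi=X|_{\Sigma_0}$ is a compact minimal Legendrian immersion by Theorem~\ref{thm:boundary-minimal}, and $\gamma$ solves the Anciaux system with $r>0$ on $[0,\delta)$ (after shrinking $\delta$), Proposition~\ref{prop:anciaux-product} shows that $F$ is a smooth Lagrangian immersion. Its induced metric is $ds^2+r^2g_{\Sigma_0}$, so it is nondegenerate, and it satisfies $H_F+\varepsilon F^\perp=0$. The set $\{0\}\times\Sigma_0$ is a compact boundary component of the parameter manifold, and its inward unit conormal is $\partial_s$, because the metric is a warped product with $|F_s|=1$.

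Next I check the Cauchy data. At $s=0$ we have $\gamma(0)=1$, so $F(0,p)=\psi(p)=X(p)$. For the conormals, $dF(\partial_s)=\gamma'(0)\psi(p)$. By \eqref{eq:anciaux-initial-data},
\[
\gamma'(0)\psi(p)=-(\sin\theta_0+i\cos\theta_0)X(p)=-\sin\theta_0\,X-\cos\theta_0\,JX,
\]
using $i=J$. By \eqref{eq:initial-conormal} this equals $e_0^{\mathrm{in}}=dX(-\nu)$. So the positions and the inward conormal images agree at every boundary point. Lemma~\ref{lem:cauchy} then gives one-sided neighborhoods $V_1\subset M$ of $\Sigma_0$ and $V_2\subset[0,\delta)\times\Sigma_0$ of $\{0\}\times\Sigma_0$, together with a diffeomorphism $\Phi:V_1\to V_2$ such that $\Phi|_{\Sigma_0}=F$ and $X=F\circ\Phi$ on $V_1$.

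Finally I produce the collar. By compactness of $\Sigma_0$, $V_2$ contains a uniform slab $[0,\delta')\times\Sigma_0$. Set $\Psi=\Phi^{-1}$ restricted to this slab, with $U=\Psi([0,\delta')\times\Sigma_0)$. Then $\Psi(0,p)=p$, and $X\circ\Psi=F$ on the slab, which is \eqref{eq:collar-product} with $\delta$ replaced by $\delta'$.

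The main obstacle is ensuring the hypotheses of Lemma~\ref{lem:cauchy} are genuinely met, that is, that the full tangent planes agree along the boundary and not only the conormals. This holds because along $\Sigma_0$ both immersions share $T\Sigma_0$ (the boundary immersion is the same map $\psi$), and the inward conormals agree. Once this is in place, the remaining work is the bookkeeping that gives a uniform collar width.
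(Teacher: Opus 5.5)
Your proposal is correct and is essentially the paper's argument. Both use Proposition~\ref{prop:anciaux-product} to check that $F(s,p)=\gamma(s)\psi(p)$ solves $H_F+\varepsilon F^\perp=0$, verify $F(0,\cdot)=X|_{\Sigma_0}$ and $F_s(0,p)=X_*(-\nu)$, and then invoke Lemma~\ref{lem:cauchy} with the identity on $\Sigma_0$. The paper routes this through the normal geodesic collar $\Psi_0$, which is not actually needed, whereas you make the inversion $\Psi=\Phi^{-1}$ and the uniform slab explicit; you should avoid using the letter $F$ for both the boundary identification and the product immersion.
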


\begin{proof}
Using the metric induced by $X$, choose $\delta_0>0$ so that
$$
        \Psi_0(s,p)=\exp_p^M(-s\nu_p)
$$
defines a smooth inward normal geodesic collar
$$
        \Psi_0:[0,\delta_0)\times\Sigma_0\longrightarrow U_0\subset M.
$$
Then
$$
        \Psi_0(0,p)=p,
        \qquad
        d\Psi_0(\partial_s)|_{s=0}=-\nu.
$$
Define
$$
        F(s,p)=\gamma(s)\psi(p).
$$
By Proposition~\ref{prop:anciaux-product}, $F$ satisfies $H_F+\varepsilon F^\perp=0$.  At $s=0$,
$$
        F(0,p)=\psi(p)=X(p).
$$
For $W\in T_p\Sigma_0$,
$$
        F_*W(0,p)=\psi_*W=X_*W.
$$
Finally, by \eqref{eq:anciaux-initial-data} and \eqref{eq:initial-conormal},
\begin{equation*}
\begin{aligned}
     F_s(0,p)
        &=-(\sin\theta_0+i\cos\theta_0)\psi(p)\\
        &=-\sin\theta_0\,X(p)-\cos\theta_0\,JX(p)\\
        &=X_*(-\nu).
\end{aligned}
\end{equation*}
       
Thus $F$ and $X\circ\Psi_0$ have the same boundary parametrization 
and the same inward unit conormal derivative along $\{0\}\times\Sigma_0$.   Lemma~\ref{lem:cauchy}, applied
with the identity map on $\Sigma_0$,  
therefore yields a one-sided neighborhood and 
a collar diffeomorphism $\Psi:[0,\delta)\times\Sigma_0\to U$ fixing $\{0\}\times \Sigma_0$, such that $X\circ \Psi=F$. This is \eqref{eq:collar-product}.
\end{proof}

Define
\begin{equation*}
        \rho=\frac{|X|^2}{2}.
\end{equation*}
On the product collar \eqref{eq:collar-product},
$$
        \rho=\frac{r^2}{2}
$$
and hence
\begin{equation}\label{eq:drho-collar}
        d\rho=rr'\,ds.
\end{equation}
Moreover, the Legendrian condition gives $\lambda(W)=0$ for $W\in T\Sigma_0$, while

\begin{equation*}
\begin{aligned}
        \lambda(\partial_s)
        &=\langle J(\gamma\psi),\gamma'\psi\rangle\\
        &=\langle i\gamma,\gamma'\rangle\\
        &=r^2\phi'\\
        &=r\sin\Theta.
\end{aligned}
\end{equation*}

Thus
\begin{equation}\label{eq:lambda-collar}
        \lambda=r\sin\Theta\,ds.
\end{equation}
Equations \eqref{eq:drho-collar} and \eqref{eq:lambda-collar} imply
\begin{equation}
        d\rho\wedge\lambda=0
\end{equation}
on a nonempty open subset of $M^\circ$.

\begin{proposition}\label{prop:global-rank-one}
Under the assumptions of Theorem~\ref{thm:boundary-components},
\begin{equation}
        d\rho\wedge\lambda\equiv0
        \qquad\ \text{on }M.
\end{equation}
\end{proposition}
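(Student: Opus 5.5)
The plan is to propagate the identity $d\rho\wedge\lambda=0$ from the Anciaux collar of Lemma~\ref{lem:anciaux-collar} to all of $M$ by real analyticity, and then to reach the boundary by continuity.

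The first step is to fix a boundary component $\Sigma_0$. If every component is free, the free-boundary case of \cite[Theorem~1.3]{GaoMaYao2026} gives an equatorial disk. There $X$ is a linear Lagrangian embedding, so $\lambda=\langle JX,\cdot\rangle$ vanishes identically and the identity is trivial. Otherwise we still apply Lemma~\ref{lem:anciaux-collar} to any component, free or not: the construction only needs $\theta_0$ constant and $X|_{\Sigma_0}$ minimal Legendrian, which Theorem~\ref{thm:boundary-minimal} provides. So it covers every case. By \eqref{eq:drho-collar} and \eqref{eq:lambda-collar}, the $2$-form $d\rho\wedge\lambda$ vanishes on the open set $\Psi((0,\delta)\times\Sigma_0)\subset M^\circ$, which is nonempty.

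The second step is the analytic continuation. Lemma~\ref{lem:analyticity} gives a real-analytic atlas on $M^\circ$ in which $X$ is real analytic. Both $\rho=|X|^2/2$ and $\lambda=X^*\lambda_{\C^n}$ are built polynomially from $X$ and $dX$, so in analytic charts the coefficients of the $2$-form $d\rho\wedge\lambda$ are real-analytic functions. Let $Z$ be the set of points of $M^\circ$ near which $d\rho\wedge\lambda$ vanishes identically. Then $Z$ is open by definition and nonempty by the first step. It is also closed in $M^\circ$: if $q$ lies in the closure of $Z$, take a connected analytic chart around $q$. Each coefficient is analytic on that chart and vanishes on a nonempty open subset of it, so by the identity theorem it vanishes on the whole chart, and hence $q\in Z$.

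The third step is connectedness. Since $M$ is a connected manifold with boundary, its interior $M^\circ$ is connected, so $Z=M^\circ$. Finally $d\rho\wedge\lambda$ is smooth on all of $M$ and $M^\circ$ is dense in $M$, so by continuity it vanishes on $\partial M$ as well.

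I expect the main obstacle to be the point already settled in the preceding lemmas: the collar really lies inside $M$ and is nonempty, which relies on Lemma~\ref{lem:cauchy} and the Cauchy data matching. Within this proposition, the only subtlety is that analyticity of $\rho$ and $\lambda$ must be taken with respect to the analytic atlas of Lemma~\ref{lem:analyticity}, not the original smooth charts. The identity theorem should then be applied chartwise, with analytic transition maps, so that the analytic functions on overlapping charts agree consistently.
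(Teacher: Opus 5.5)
Your proposal is correct and follows essentially the same route as the paper: $d\rho\wedge\lambda$ vanishes on the Anciaux collar by \eqref{eq:drho-collar}--\eqref{eq:lambda-collar}, the vanishing propagates through the connected interior $M^\circ$ via the real-analytic atlas of Lemma~\ref{lem:analyticity} and the identity theorem, and it reaches $\partial M$ by continuity; your open--closed set $Z$ is equivalent to the paper's chain of analytic balls. The separate free-boundary case is harmless but unnecessary, since Lemma~\ref{lem:anciaux-collar} applies for every constant $\theta_0$ (for $\theta_0=\pi/2$ the profile is the radial line and $\lambda=0$ on the collar).
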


\begin{proof}
Equip $M^\circ$ with the real-analytic atlas supplied by
Lemma~\ref{lem:analyticity}.  In these analytic coordinates, $X$ is real
analytic.  Since
$$
        \rho=\frac12\sum_{\alpha=1}^{2n}(X^\alpha)^2
$$
and
$$
        \lambda(Y)=\langle JX,dX(Y)\rangle,
$$
the coordinate coefficients of $\rho$, $d\rho$, and $\lambda$ are real
analytic.  Consequently every coefficient of the two-form
$$
        \Xi:=d\rho\wedge\lambda
$$
is real analytic on $M^\circ$.

The collar supplied by Lemma~\ref{lem:anciaux-collar} contains an open subset
$(0,\delta)\times\Sigma_0$ of the interior.  Equations
\eqref{eq:drho-collar} and \eqref{eq:lambda-collar} show that both one-forms are
multiples of $ds$ there; hence $\Xi=0$ on that nonempty open set.

We recall why the identity theorem can be applied globally.  The interior of a
connected smooth manifold with nonempty boundary is connected: any path in
$M$ joining two interior points can be pushed slightly away from the boundary
inside a collar.  Cover such an interior path by finitely many connected
analytic coordinate balls with consecutive overlaps.  The coefficients of
$\Xi$ vanish in the first ball.  The real-analytic identity theorem
\cite[Section~1.2]{KrantzParks2002} propagates this vanishing across each
nonempty overlap and therefore along the whole chain of balls.  Since the
terminal point was arbitrary,
$$
        \Xi=d\rho\wedge\lambda=0
        \qquad\ \text{on }M^\circ.
$$
Finally, $d\rho$ and $\lambda$ extend smoothly to $\partial M$.  Taking limits
from the interior gives the same identity at every boundary point.
\end{proof}

\subsection{Intrinsic splitting and the profile equation}

We now work intrinsically on a connected open set on which the rank-one identity holds and the relevant line field is nondegenerate.  Every formula in this section is derived directly from
$$
        d\rho\wedge\lambda=0,
        \qquad
        \rho=\frac{|X|^2}{2},
$$
together with the Lagrangian and self-similar equations.  No ambient symmetry is assumed.

Define
\begin{equation*}
        U=\nabla\rho=X^\top,
        \qquad
        V=\lambda^\sharp=(JX)^\top.
\end{equation*}
The first identity follows from
$$
        d\rho(Y)=\langle X,Y\rangle
        =\langle X^\top,Y\rangle.
$$
The second is the metric dual of \eqref{eq:lambda-evaluation}.

\begin{lemma}\label{lem:X-U-V}
At every point of $M$,
\begin{equation*}
        X=U-JV.
\end{equation*}
Consequently,
\begin{equation}\label{eq:r-UV}
        |X|^2=|U|^2+|V|^2.
\end{equation}
\end{lemma}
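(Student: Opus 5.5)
At a point $q\in M$ we split $\C^n=T_qM\oplus J(T_qM)$ orthogonally. This is possible because $X$ is Lagrangian: $J(T_qM)$ is orthogonal to $T_qM$ and has complementary dimension, so it is exactly the normal space $N_qM$. We then decompose the position vector $X(q)$ into its tangential and normal parts and identify each part with $U$ or $V$.

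The tangential part is $X^\top=U$ by definition. For the normal part we use that $J$ is an isometry sending $T_qM$ onto $N_qM$, so $X^\perp=JY$ for a unique $Y\in T_qM$. To find $Y$, pair with an arbitrary tangent vector $Z$. Using $J^2=-1$ and the fact that $J$ is orthogonal and skew,
\[
        \langle Y,Z\rangle
        =\langle JY,JZ\rangle
        =\langle X^\perp,JZ\rangle
        =\langle X,JZ\rangle
        =-\langle JX,Z\rangle
        =-\langle V,Z\rangle .
\]
The third equality holds because $JZ$ is normal. The last equality uses \eqref{eq:lambda-evaluation} together with $V=\lambda^\sharp=(JX)^\top$. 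Hence $Y=-V$ and $X^\perp=-JV$, which gives $X=U-JV$.

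For \eqref{eq:r-UV}, observe that $U$ is tangent and $JV$ is normal, so the two terms are orthogonal. Since $|JV|=|V|$, Pythagoras gives $|X|^2=|U|^2+|V|^2$.

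The only delicate point is the sign in the normal part, which depends on the convention $\lambda(Y)=\langle JX,Y\rangle$. As a check, $V=(JX)^\top=J(X^\perp)$ (because $J$ swaps the tangent and normal spaces), and then $X^\perp=-J(JX^\perp)=-JV$ directly. This confirms the sign. Every step is pointwise linear algebra, so the identity holds at all points of $M$, including boundary points.
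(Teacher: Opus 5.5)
Your proof is correct and follows essentially the same route as the paper: write $X^\perp=JY$ with $Y$ tangent (using the Lagrangian condition) and identify $Y=-V$ because $J$ interchanges $TM$ and its normal bundle. Your closing check, $V=(JX)^\top=J(X^\perp)$, is exactly the paper's argument, and your pairing computation is an equivalent way of reading off the same tangential component.
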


\begin{proof}
Write
$$
        X=U+X^\perp.
$$
Since $M$ is Lagrangian, there is a tangent vector $Z$ such that
$$
        X^\perp=JZ.
$$
Then
$$
        JX=JU-Z.
$$
The vector $JU$ is normal, so the tangential component of $JX$ is $-Z$.  By definition this tangential component is $V$.  Hence $Z=-V$, and therefore
$$
        X=U+JZ=U-JV.
$$
The two vectors $U$ and $JV$ are orthogonal because one is tangent and the other normal.  Taking norms gives \eqref{eq:r-UV}.
\end{proof}

By Proposition~\ref{prop:global-rank-one}, the one-forms dual to $U$ and $V$ have zero wedge product.  Thus $U$ and $V$ are linearly dependent at every point.  If $X\ne0$, Lemma~\ref{lem:X-U-V} shows that they cannot both vanish.

Let $\mathcal O\subset M$ be a connected open set on which $X\ne0$ and on which the common line generated by $U,V$ is orientable.  Choose a unit vector field $e_0$ spanning that line and write
\begin{equation*}
        U=a e_0,
        \qquad
        V=b e_0.
\end{equation*}
Let
\begin{equation*}
        \eta=e_0^\flat,
        \qquad
        \Dcal=\ker\eta=e_0^\perp,
\end{equation*}
where $e_0^\flat$ denotes the dual one-form of $e_0$.
By Lemma~\ref{lem:X-U-V},
\begin{equation}\label{eq:X-a-b}
        X=a e_0-bJe_0.
\end{equation}
Put
\begin{equation}\label{eq:r-a-b}
        r=|X|=\sqrt{a^2+b^2}.
\end{equation}
The two closed one-forms take the simple form
\begin{equation}\label{eq:closed-forms-a-b}
        d\rho=a\eta,
        \qquad
        \lambda=b\eta.
\end{equation}

\begin{proposition}\label{prop:eta-closed}
On $\mathcal O$,
\begin{equation}\label{eq:eta-closed}
        d\eta=0.
\end{equation}
Moreover, for every $W\in\Dcal$,
\begin{equation}
        W(a)=W(b)=W(r)=0,
\end{equation}
and
\begin{equation}
        \nabla_{e_0}e_0=0.
\end{equation}
In particular, $\Dcal$ is integrable, and the integral curves of $e_0$ are unit-speed geodesics in $M$.
\end{proposition}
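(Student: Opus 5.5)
The plan is to compute the covariant derivatives of $U=X^\top$ and $V=(JX)^\top$ from the Gauss formula, and to combine them so that the cubic form $C$ cancels. What remains is a linear identity for $\nabla e_0$, from which all the claims follow, using only the closedness of $d\rho$ and $\lambda$.

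\emph{Step 1: the Hessians of $\rho$ and of $\lambda$.} Differentiate $X=U+X^\perp$ along a tangent vector $Y$ and use $D_YX=Y$ and Lemma~\ref{lem:X-U-V} in the form $X^\perp=-JV=-bJe_0$. The tangential part gives
\[
\langle\nabla_YU,Z\rangle=\langle Y,Z\rangle+\langle B(Y,Z),X^\perp\rangle=\langle Y,Z\rangle-b\,C(Y,Z,e_0).
\]
Next, Lemma~\ref{lem:X-U-V} also gives $JX=JU+V$. Differentiate this, using $D_Y(JX)=JY$ (which is normal) and $D_Y(JU)=J\nabla_YU+JB(Y,U)$, whose tangential part is $JB(Y,U)$. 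This yields
\[
\langle\nabla_YV,Z\rangle=\langle B(Y,U),JZ\rangle=a\,C(Y,e_0,Z).
\]
Both expressions are symmetric in $(Y,Z)$, by the total symmetry of $C$, which is consistent with $d(d\rho)=d\lambda=0$.

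\emph{Step 2: eliminating $C$.} The combination $a\nabla U+b\nabla V$ removes the cubic form, so $a\nabla U+b\nabla V=a\,\Id$. On the other hand, $U=ae_0$ and $V=be_0$, so
\[
a\nabla_YU+b\nabla_YV=(a\,da+b\,db)(Y)\,e_0+r^2\nabla_Ye_0=r\,dr(Y)\,e_0+r^2\nabla_Ye_0 .
\]
Since $|e_0|=1$, the vector $\nabla_Ye_0$ is orthogonal to $e_0$, and $r>0$ on $\mathcal O$. Comparing the two expressions for $a\nabla U+b\nabla V$ then gives three facts:
\[
r\,dr(Y)=a\langle Y,e_0\rangle,
\qquad
\nabla_{e_0}e_0=0,
\qquad
\nabla_We_0=\frac{a}{r^2}\,W\quad(W\in\Dcal).
\]
In particular $W(r)=0$ for every $W\in\Dcal$, and the integral curves of $e_0$ are unit-speed geodesics.

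\emph{Step 3: closedness and the remaining derivatives.} From Step 2, $\nabla e_0$ is a symmetric endomorphism: it vanishes on $e_0$ and equals $(a/r^2)\Id$ on $\Dcal$. Hence
\[
d\eta(Y,Z)=\langle\nabla_Ye_0,Z\rangle-\langle\nabla_Ze_0,Y\rangle=0,
\]
which proves \eqref{eq:eta-closed}. Consequently $\Dcal=\ker\eta$ is integrable. Finally, $0=d(d\rho)=da\wedge\eta$ and $0=d\lambda=db\wedge\eta$ by \eqref{eq:closed-forms-a-b}. Evaluating these on $(W,e_0)$ with $W\in\Dcal$ gives $W(a)=W(b)=0$.

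The only delicate point is Step 1: the signs and the placement of $J$ in the tangential projections of $D_YX^\perp$ and $D_Y(JU)$. If the sign of either cubic-form term were wrong, the cancellation in Step 2 would fail. I will double-check these signs against the product model, where $\nabla_We_0=(r'/r)W$ and $a=rr'/r=r'$ should agree.
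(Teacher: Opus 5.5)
Your proof is correct, but it takes a different route from the paper's.

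\textbf{The paper's route.} The paper never uses the second fundamental form here. It expands $d(a\eta)=0$ and $d(b\eta)=0$.
\begin{itemize}
\item On $\Dcal\times\Dcal$ this gives $a\,d\eta=b\,d\eta=0$, so $d\eta|_{\Dcal\times\Dcal}=0$ because $(a,b)\neq(0,0)$.
\item On $(e_0,W)$ it gives $W(a)=a\,d\eta(e_0,W)$ and $W(b)=b\,d\eta(e_0,W)$. Then $W(a^2+b^2)=2\,d\rho(W)=0$ forces $r^2\,d\eta(e_0,W)=0$.
\item Geodesicity is read off from $d\eta(e_0,W)=\langle\nabla_{e_0}e_0,W\rangle$.
\end{itemize}
So the paper first obtains $d\eta=0$ and $W(a)=W(b)=0$, purely from the rank-one structure of two closed forms together with $a^2+b^2=2\rho$. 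Only in the next proposition does it compute $\nabla_We_0=(a/r^2)W$, by differentiating $X=ae_0-bJe_0$.

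\textbf{Your route.} You reverse this order. Using Gauss--Weingarten and the total symmetry of $C$, you get $\langle\nabla_YU,Z\rangle=\langle Y,Z\rangle-b\,C(Y,Z,e_0)$ and $\langle\nabla_YV,Z\rangle=a\,C(Y,e_0,Z)$. I checked both signs, including the tangential projections of $D_YX^\perp$ and $D_Y(JU)$, and they are right. The combination $a\nabla U+b\nabla V=a\,\Id$ then gives the whole endomorphism $\nabla e_0$ at once. Closedness of $\eta$ is simply its symmetry, and $W(a)=W(b)=0$ follows from $d(a\eta)=d(b\eta)=0$.

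\textbf{Trade-off.} Your route uses more extrinsic input: the Lagrangian condition, through $J$ exchanging $TM$ and the normal bundle, and cubic symmetry. In return it gives the total umbilicity of the $\Dcal$-leaves in $M$ for free, which is the tangential half of the paper's next proposition. The paper's route is more economical. It shows the statement follows from $d\rho\wedge\lambda=0$ and $|X|^2=a^2+b^2$ alone, which fits the paper's emphasis on an intrinsic derivation.

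\textbf{A small slip.} In your closing sanity check, the product model has $a=d\rho(e_0)=rr'$, not $r'$, so that $a/r^2=r'/r$ as expected. This remark lies outside the proof and does not affect it.
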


\begin{proof}
Since $d(d\rho)=0$ and $d\lambda=0$, equation \eqref{eq:closed-forms-a-b} gives
\begin{equation}\label{eq:closed-equations-a-b}
        da\wedge\eta+a\,d\eta=0,
        \qquad
        db\wedge\eta+b\,d\eta=0.
\end{equation}
Let $W,Z\in\Dcal$.  Evaluating the two equations on $(W,Z)$,
$$
        a\,d\eta(W,Z)=0,
        \qquad
        b\,d\eta(W,Z)=0.
$$
Because $a$ and $b$ do not vanish simultaneously, we obtain
\begin{equation}\label{eq:deta-D-D-zero}
        d\eta(W,Z)=0.
\end{equation}

Next evaluate \eqref{eq:closed-equations-a-b} on $(e_0,W)$.  Since $$\eta(e_0)=1, \eta(W)=0,$$
we have
\begin{equation}\label{eq:W-a-b-c}
        W(a)=a\,d\eta(e_0,W),
        \qquad
        W(b)=b\,d\eta(e_0,W).
\end{equation}
On the other hand, $d\rho(W)=0$, so
$$
        W(r^2)=2W(\rho)=0.
$$
Using \eqref{eq:r-a-b} and \eqref{eq:W-a-b-c},
$$
\begin{aligned}
        0
        &=2aW(a)+2bW(b)\\
        &=2(a^2+b^2)d\eta(e_0,W)\\
        &=2r^2d\eta(e_0,W).
\end{aligned}
$$
Since $r>0$,
\begin{equation}\label{eq:deta-e0-W-zero}
        d\eta(e_0,W)=0.
\end{equation}
Substituting this back into \eqref{eq:W-a-b-c} gives $W(a)=W(b)=0$, and therefore $W(r)=0$.  Together with \eqref{eq:deta-D-D-zero}, equation \eqref{eq:deta-e0-W-zero} proves \eqref{eq:eta-closed}.

The Frobenius theorem now gives integrability of $\Dcal=\ker\eta$.  To prove geodesicity, let $W\in\Dcal$.  Since $\eta=e_0^\flat$,
$$
\begin{aligned}
        d\eta(e_0,W)
        &=e_0\langle e_0,W\rangle
          -W\langle e_0,e_0\rangle
          -\langle e_0,[e_0,W]\rangle\\
        &=-\langle e_0,\nabla_{e_0}W-\nabla_We_0\rangle\\
        &=\langle\nabla_{e_0}e_0,W\rangle.
\end{aligned}
$$
Thus \eqref{eq:deta-e0-W-zero} implies that $\nabla_{e_0}e_0$ is orthogonal to $\Dcal$.  It is also orthogonal to $e_0$, because $e_0$ has unit length.  Hence $\nabla_{e_0}e_0=0$.
\end{proof}

\begin{remark}
Locally, $d\eta=0$ implies the existence of a coordinate $s$ with $\eta=ds$.  Since $\eta(e_0)=1$, one has $e_0=\partial_s$, while the leaves of $\Dcal$ are the level sets of $s$.  The global proof in Section~\ref{sec:global-classification} constructs this coordinate by the flow of $e_0$, so no global exactness of $\eta$ is assumed here.
\end{remark}

\begin{proposition}
For every $W\in\Dcal$,
\begin{equation}\label{eq:D-W-e0}
        D_We_0
        =\frac{a}{r^2}W+\frac{b}{r^2}JW.
\end{equation}
Equivalently,
\begin{equation}\label{eq:nabla-B-W-e0}
        \nabla_We_0=\frac{a}{r^2}W,
        \qquad
        B(W,e_0)=\frac{b}{r^2}JW.
\end{equation}
Furthermore, there is a function $\kappa$ such that
\begin{equation}\label{eq:B-e0-e0-kappa}
        B(e_0,e_0)=\kappa Je_0,
\end{equation}
and the self-similar equation forces
\begin{equation}\label{eq:kappa-formula}
        \kappa=b\left(\varepsilon-\frac{n-1}{r^2}\right).
\end{equation}
\end{proposition}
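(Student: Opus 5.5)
Differentiate the identity $X=ae_0-bJe_0$ from \eqref{eq:X-a-b} in a direction $W\in\Dcal$. Since $D_WX=W$ and $W(a)=W(b)=0$ by Proposition~\ref{prop:eta-closed}, we get
\[
W=a\,D_We_0-b\,JD_We_0.
\]
Write $D_We_0=\nabla_We_0+B(W,e_0)$, and use the Lagrangian condition to set $B(W,e_0)=JZ$ for a tangent vector $Z$. Then $JD_We_0=J\nabla_We_0-Z$. The left-hand side $W$ is tangent, so comparing tangential and normal parts gives
\[
W=a\nabla_We_0+bZ,\qquad 0=aZ-b\nabla_We_0 .
\]
Multiply the first equation by $a$ and the second by $b$ and subtract; then multiply the first by $b$ and the second by $a$ and add. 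This yields $r^2\nabla_We_0=aW$ and $r^2Z=bW$, which is exactly \eqref{eq:nabla-B-W-e0} and hence \eqref{eq:D-W-e0}. Here $r^2=a^2+b^2>0$ on $\mathcal O$.

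For \eqref{eq:B-e0-e0-kappa} I use the total symmetry of the cubic form $C$. For $W\in\Dcal$,
\[
\langle B(e_0,e_0),JW\rangle=\langle B(e_0,W),Je_0\rangle=\frac{b}{r^2}\langle JW,Je_0\rangle=0 .
\]
Thus $B(e_0,e_0)$ has no component along $J\Dcal$. Since $J e_0,J\Dcal$ span the normal space, $B(e_0,e_0)=\kappa Je_0$ for a function $\kappa$.

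To obtain $\kappa$, I compute $H$ in an orthonormal frame $e_0,e_1,\dots,e_{n-1}$ with $e_a\in\Dcal$. The $J\Dcal$-components of $H$ require a separate check. By symmetry of $C$ and \eqref{eq:nabla-B-W-e0},
\[
\langle B(e_a,e_a),Je_0\rangle=\langle B(e_a,e_0),Je_a\rangle=\frac{b}{r^2}.
\]
Therefore $\langle H,Je_0\rangle=\kappa+(n-1)b/r^2$. On the other hand, $X^\perp=-bJe_0$ by \eqref{eq:X-a-b}, so the $Je_0$-component of $H+\varepsilon X^\perp=0$ reads $\kappa+(n-1)b/r^2-\varepsilon b=0$. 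This gives \eqref{eq:kappa-formula}.

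The $J\Dcal$-components of the self-similar equation are consistent automatically, because $X^\perp$ has no $J\Dcal$-component. They therefore impose only a constraint on $\sum_a B(e_a,e_a)$, which the statement does not require. The only delicate point is the sign bookkeeping in the tangential/normal splitting of the first step, in particular $J(JZ)=-Z$. I expect that to be the main place to be careful. The rest is linear algebra plus the cubic symmetry lemma.
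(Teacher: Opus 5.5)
Your proposal is correct and follows the paper's proof essentially step for step: differentiate $X=ae_0-bJe_0$ along $\Dcal$, use cubic symmetry to get $B(e_0,e_0)=\kappa Je_0$, and take the $Je_0$-component of the trace identity to obtain $\kappa$. The only cosmetic difference is in the first step: you split $W=aD_We_0-bJD_We_0$ into tangential and normal parts and solve a $2\times2$ system, whereas the paper inverts $a\Id-bJ$ directly by applying $(a\Id+bJ)/r^2$.
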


\begin{proof}
Differentiate \eqref{eq:X-a-b} in a direction $W\in\Dcal$.  Proposition~\ref{prop:eta-closed} gives $W(a)=W(b)=0$, while $D_WX=W$.  Hence
\begin{equation}\label{eq:linear-equation-DWe0}
        W=aD_We_0-bJD_We_0.
\end{equation}
As an endomorphism of the ambient vector space,
$$
        (a\Id+bJ)(a\Id-bJ)
        =(a^2+b^2)\Id=r^2\Id.
$$
Applying $(a\Id+bJ)/r^2$ to \eqref{eq:linear-equation-DWe0} gives \eqref{eq:D-W-e0}.  Since $W$ is tangent and $JW$ is normal, the tangential and normal components are exactly \eqref{eq:nabla-B-W-e0}.

We next determine $B(e_0,e_0)$.  Let $W\in\Dcal$.  By the symmetry of the cubic form and \eqref{eq:nabla-B-W-e0},
$$
\begin{aligned}
        \langle B(e_0,e_0),JW\rangle
        &=\langle B(e_0,W),Je_0\rangle\\
        &=\frac{b}{r^2}\langle JW,Je_0\rangle=0.
\end{aligned}
$$
The normal space is $JTM$, and its orthogonal decomposition is
$$
        JTM=\R Je_0\oplus J\Dcal.
$$
The preceding identity shows that the $J\Dcal$-component of $B(e_0,e_0)$ vanishes, proving \eqref{eq:B-e0-e0-kappa}.

From \eqref{eq:X-a-b},
\begin{equation*}
        X^\perp=-bJe_0.
\end{equation*}
Thus \eqref{eq:intro-self-similar} gives
\begin{equation}\label{eq:H-b}
        H=\varepsilon bJe_0.
\end{equation}
Choose an orthonormal frame $e_1,\ldots,e_{n-1}$ of $\Dcal$.  By cubic symmetry and \eqref{eq:nabla-B-W-e0},
$$
\begin{aligned}
        \langle B(e_a,e_a),Je_0\rangle
        &=\langle B(e_a,e_0),Je_a\rangle\\
        &=\frac{b}{r^2}.
\end{aligned}
$$
Taking the $Je_0$-component of the trace identity \eqref{eq:H-b},
$$
        \kappa+(n-1)\frac{b}{r^2}=\varepsilon b.
$$
Solving for $\kappa$ gives \eqref{eq:kappa-formula}.
\end{proof}

Let a prime denote differentiation along $e_0$.  Since the integral curves of $e_0$ are unit speed, this agrees with ordinary arclength differentiation along those curves.

\begin{proposition}\label{prop:intrinsic-anciaux}
On $\mathcal O$,
\begin{equation}\label{eq:a-b-derivatives}
        a'=1-b\kappa,
        \qquad
        b'=a\kappa,
\end{equation}
where $\kappa$ is given by \eqref{eq:kappa-formula}.  Define $\Theta$ locally by
\begin{equation}\label{eq:theta-a-b}
        a=r\cos\Theta,
        \qquad
        b=r\sin\Theta.
\end{equation}
Then
\begin{equation}\label{eq:intrinsic-anciaux-system}
        r'=\cos\Theta,
        \qquad
        \Theta'=\left(\varepsilon r-\frac nr\right)\sin\Theta.
\end{equation}
Moreover,
\begin{equation}\label{eq:intrinsic-first-integral}
        E=r^n\ee^{-\varepsilon r^2/2}\sin\Theta
        =r^{n-1}b\ee^{-\varepsilon r^2/2}
\end{equation}
is constant on every connected component of $\mathcal O$.
\end{proposition}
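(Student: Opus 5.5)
We compute $a'$ and $b'$ by differentiating the defining identities along $e_0$, and then convert to polar form. Since $a=\langle X,e_0\rangle$ (because $U=X^\top=ae_0$) and $b=\langle JX,e_0\rangle$ (because $V=(JX)^\top=be_0$), we have
\[
a'=\langle D_{e_0}X,e_0\rangle+\langle X,D_{e_0}e_0\rangle,
\qquad
b'=\langle JD_{e_0}X,e_0\rangle+\langle JX,D_{e_0}e_0\rangle .
\]
The term $D_{e_0}X$ equals $e_0$. By Proposition~\ref{prop:eta-closed} together with \eqref{eq:B-e0-e0-kappa},
\[
D_{e_0}e_0=\nabla_{e_0}e_0+B(e_0,e_0)=\kappa Je_0 .
\]

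Inserting these into the formulas and using \eqref{eq:X-a-b}, namely $X=ae_0-bJe_0$ and $JX=aJe_0+be_0$, gives the following.
\[
a'=1+\kappa\langle ae_0-bJe_0,Je_0\rangle=1-b\kappa .
\]
\[
b'=\langle Je_0,e_0\rangle+\kappa\langle aJe_0+be_0,Je_0\rangle=a\kappa .
\]
This establishes \eqref{eq:a-b-derivatives}. The only point needing care is the sign convention relating $V$ to $b$, which I will check against Lemma~\ref{lem:X-U-V}.

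Next, since $X\neq0$ on $\mathcal O$, the angle $\Theta$ is locally well defined via \eqref{eq:theta-a-b}. From $r^2=a^2+b^2$ we get
\[
rr'=aa'+bb'=a(1-b\kappa)+ba\kappa=a,
\]
so $r'=\cos\Theta$. For the angle, use $r^2\Theta'=ab'-ba'$, which yields
\[
r^2\Theta'=a^2\kappa-b+b^2\kappa=r^2\kappa-b .
\]
Therefore $\Theta'=\kappa-b/r^2$. Substituting \eqref{eq:kappa-formula} gives
\[
\Theta'=b\Bigl(\varepsilon-\frac{n}{r^2}\Bigr)=\Bigl(\varepsilon r-\frac nr\Bigr)\sin\Theta,
\]
which is \eqref{eq:intrinsic-anciaux-system}.

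For the first integral, the derivative of $E$ along $e_0$ vanishes by exactly the computation in Proposition~\ref{prop:anciaux-product}. That computation uses only the two ODEs and never divides by $\sin\Theta$. Alternatively, one can differentiate $r^{n-1}b\,\ee^{-\varepsilon r^2/2}$ directly using $rr'=a$ and $b'=a\kappa$.

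The derivative of $E$ along any $W\in\Dcal$ also vanishes, since $W(r)=W(b)=0$ by Proposition~\ref{prop:eta-closed}. Hence $dE=0$, and $E$ is constant on each connected component of $\mathcal O$. The expression $r^{n-1}b$ for $E$ is globally well defined, which avoids any ambiguity in the choice of $\Theta$.

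I expect the main obstacle to be bookkeeping of signs and orientation. Reversing the orientation of $e_0$ flips the signs of $a$, $b$, $\kappa$ and of the derivative, but the equations are invariant under this change, and $\mathcal O$ was chosen so that the line field is oriented there.
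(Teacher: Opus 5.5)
Your proof is correct and follows essentially the same route as the paper. Both arguments use $D_{e_0}X=e_0$, $D_{e_0}e_0=\kappa Je_0$ and $X=ae_0-bJe_0$ to obtain $a'=1-b\kappa$ and $b'=a\kappa$, then $\Theta'=\kappa-b/r^2$, and constancy of $E$ from $W(r)=W(b)=0$ together with the $e_0$-derivative computation.

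The only differences are cosmetic. You differentiate the scalar pairings $a=\langle X,e_0\rangle$ and $b=\langle JX,e_0\rangle$ instead of comparing components of the differentiated vector identity. You also get $rr'=a$ from $aa'+bb'$ rather than from $d\rho(e_0)=a$.
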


\begin{proof}
Differentiate \eqref{eq:X-a-b} in the $e_0$-direction.  By Proposition~\ref{prop:eta-closed}, $\nabla_{e_0}e_0=0$, and by \eqref{eq:B-e0-e0-kappa},
$D_{e_0}e_0=\kappa Je_0.$
Since $D_{e_0}X=e_0$,
$$
\begin{aligned}
        e_0
        &=D_{e_0}(a e_0-bJe_0)\\
        &=a'e_0+a\kappa Je_0-b'Je_0-bJ(\kappa Je_0)\\
        &=(a'+b\kappa)e_0+(a\kappa-b')Je_0.
\end{aligned}
$$
Comparing the $e_0$ and $Je_0$ components gives \eqref{eq:a-b-derivatives}.

Because $\rho=r^2/2$,
$rr'=d\rho(e_0)=a.$
Using \eqref{eq:theta-a-b},
\begin{equation*}
        r'=\frac ar=\cos\Theta.
\end{equation*}
Also,
$\Theta'=\frac{ab'-ba'}{r^2}.$
Substitute \eqref{eq:a-b-derivatives}:
$$
\begin{aligned}
        \Theta'
        &=\frac{a(a\kappa)-b(1-b\kappa)}{r^2}\\
        &=\kappa-\frac{b}{r^2}.
\end{aligned}
$$
Using \eqref{eq:kappa-formula} and $b=r\sin\Theta$,
$$
\begin{aligned}
        \Theta'
        &=b\left(\varepsilon-\frac{n-1}{r^2}\right)-\frac{b}{r^2}\\
        &=b\left(\varepsilon-\frac n{r^2}\right)\\
        &=\left(\varepsilon r-\frac nr\right)\sin\Theta.
\end{aligned}
$$
This proves \eqref{eq:intrinsic-anciaux-system}.

To prove constancy of $E$, first note from Proposition~\ref{prop:eta-closed} that $r$ and $b$ are constant in every $\Dcal$-direction.  It remains to differentiate along $e_0$.  Using the second expression in \eqref{eq:intrinsic-first-integral},
$$
\begin{aligned}
        E'
        &=\ee^{-\varepsilon r^2/2}\left((n-1)r^{n-2}r'b+r^{n-1}b'-\varepsilon r^nr'b\right)\\
        &=r^{n-2}\ee^{-\varepsilon r^2/2}
          \left((n-1)r'b+rb'-\varepsilon r^2r'b\right).
\end{aligned}
$$
Since $r'=a/r$, $b'=a\kappa$, and
$\kappa=b(\varepsilon-(n-1)/r^2)$, the bracket equals
$$
        (n-1)\frac{ab}{r}
        +ra b\left(\varepsilon-\frac{n-1}{r^2}\right)
        -\varepsilon rab=0.
$$
Thus $E'=0$, and $E$ is constant on the connected set.
\end{proof}

On the product collar, the angle $\Theta$ defined by \eqref{eq:theta-a-b} agrees with the profile angle introduced in Subsection~\ref{subsec:product-models}.

The intrinsic splitting formulas already imply that the normalized link is constant along the $e_0$-geodesics.

\begin{proposition}\label{prop:local-reconstruction}
Let $s$ be a local coordinate with $e_0=\partial_s$, and define $\phi$ by
\begin{equation}\label{eq:phi-prime-intrinsic}
        \phi'=\frac{b}{r^2}.
\end{equation}
Then
\begin{equation}\label{eq:normalized-link-parallel}
        D_{e_0}\left(\ee^{-i\phi}\frac{X}{r}\right)=0.
\end{equation}
Consequently, after choosing a leaf $N$ of $\Dcal$, there are local product coordinates in which
\begin{equation}\label{eq:local-product-reconstruction}
        X(s,p)=\gamma(s)\psi(p),
        \qquad
        \gamma(s)=r(s)\ee^{i\phi(s)},
\end{equation}
where $\psi:N\to\Sph^{2n-1}$ is a Legendrian immersion.  The curve $\gamma$ is arclength-parametrized and satisfies the self-similar system.
\end{proposition}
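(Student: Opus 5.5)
The plan is to compute $D_{e_0}$ of the normalized position vector directly from the intrinsic formulas, integrate, and then read off the product structure. Set $Y=\ee^{-i\phi}X/r$, where multiplication by $\ee^{-i\phi}$ means applying $\cos\phi\,\Id-\sin\phi\,J$.

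\textbf{Step 1: parallelism along $e_0$.} By the product rule,
\[
D_{e_0}Y=\ee^{-i\phi}\Bigl(-i\phi'\frac{X}{r}-\frac{r'}{r^2}X+\frac1r D_{e_0}X\Bigr).
\]
I will substitute $D_{e_0}X=e_0$, $X=ae_0-bJe_0$, $r'=a/r$ and $\phi'=b/r^2$. Then
\[
-i\phi'X=-\frac{b}{r^2}J(ae_0-bJe_0)=-\frac{ab}{r^2}Je_0-\frac{b^2}{r^2}e_0 .
\]
The bracket multiplied by $r$ becomes
\[
-\frac{ab}{r^2}Je_0-\frac{b^2}{r^2}e_0-\frac{a^2}{r^2}e_0+\frac{ab}{r^2}Je_0+e_0 .
\]
This vanishes because $a^2+b^2=r^2$, which proves \eqref{eq:normalized-link-parallel}. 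So the computation uses only Lemma~\ref{lem:X-U-V}, Proposition~\ref{prop:intrinsic-anciaux}, and $D_{e_0}X=e_0$; the second fundamental form never enters.

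\textbf{Step 2: local product coordinates.} Since $d\eta=0$ (Proposition~\ref{prop:eta-closed}), I choose locally $s$ with $\eta=ds$, and let $N$ be the leaf $\{s=0\}$ of $\Dcal$. The flow of $e_0$ then gives coordinates $(s,p)$ with $e_0=\partial_s$. Because $W(a)=W(b)=W(r)=0$ for $W\in\Dcal$, and the flow of $e_0$ preserves $\Dcal$ (as $\eta=ds$ and $\mathcal L_{e_0}\eta=d(\eta(e_0))=0$), the functions $a,b,r$ depend on $s$ alone. Hence $\phi$, defined by integrating $\phi'=b/r^2$ with $\phi(0)=0$, is also a function of $s$ only. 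Step~1 then gives
\[
\ee^{-i\phi(s)}\frac{X(s,p)}{r(s)}=\frac{X(0,p)}{r(0)}=:\psi(p),
\]
which is \eqref{eq:local-product-reconstruction} with $\gamma=r\ee^{i\phi}$.

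\textbf{Step 3: properties of $\psi$ and $\gamma$.} Clearly $|\psi|=1$. For the Legendrian property, note that $\lambda=b\,\eta$ vanishes on $TN\subset\Dcal$, so $\langle J\psi,\psi_*W\rangle=r(0)^{-2}\lambda(W)=0$. The Lagrangian condition on $\psi_*TN\subset X_*TM$ gives $\omega|_{\psi_*TN}=0$. For the curve, $\gamma'=\ee^{i\phi}(r'+ir\phi')=\ee^{i\phi}(\cos\Theta+i\sin\Theta)$, so $|\gamma'|=1$. Moreover $(r,\Theta)$ satisfy \eqref{eq:intrinsic-anciaux-system} and $r\phi'=\sin\Theta$, which is exactly the self-similar system of Subsection~\ref{subsec:product-models}.

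\textbf{Step 4: $\psi$ is an immersion and minimal.} Since $X_*W=\gamma\,\psi_*W$, $\psi$ is an immersion. I would also note that $\psi$ is minimal, since Proposition~\ref{prop:anciaux-product} requires minimality to recover the equation. I would argue it by reversing the computation in Proposition~\ref{prop:anciaux-product}: the $J\Dcal$-components of $H$ vanish because $H=\varepsilon bJe_0$, and these components equal the spherical mean curvature of the link.

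\textbf{Main obstacle.} The one point needing care is Step~2, namely ensuring that $a,b,r,\phi$ are genuinely functions of $s$ alone on the chosen coordinate patch. This requires the leaves of $\Dcal$ to be level sets of $s$, so the patch should be taken small enough that each leaf there is connected.
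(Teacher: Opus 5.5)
Your proposal is correct and follows the paper's proof in substance. It uses the same direct computation of $D_{e_0}(\ee^{-i\phi}X/r)$ from $X=ae_0-bJe_0$, $r'=a/r$ and $a^2+b^2=r^2$, and the same flow-box coordinates adapted to the closed form $\eta$ in which $a,b,r,\phi$ depend on $s$ alone, and it makes the same checks that $\psi$ is a Legendrian immersion and $|\gamma'|=1$. Your Step~4 minimality argument is valid but not needed, since the statement only asserts that $\psi$ is Legendrian and the profile system for $\gamma$ already comes from Proposition~\ref{prop:intrinsic-anciaux} rather than from Proposition~\ref{prop:anciaux-product}.
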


\begin{proof}
We first compute the derivative of $X/r$.  Since $D_{e_0}X=e_0$ and $r'=a/r$,
$$
\begin{aligned}
        D_{e_0}\left(\frac Xr\right)
        &=\frac1r e_0-\frac{r'}{r^2}X\\
        &=\frac1r e_0-\frac{a}{r^3}(a e_0-bJe_0)\\
        &=\frac{b^2}{r^3}e_0+\frac{ab}{r^3}Je_0.
\end{aligned}
$$
On the other hand,
$$
        J\left(\frac Xr\right)
        =\frac1r(aJe_0+b e_0).
$$
Therefore
\begin{equation}
        D_{e_0}\left(\frac Xr\right)
        =\frac{b}{r^2}J\left(\frac Xr\right).
\end{equation}
Upon differentiating $\ee^{-i\phi}X/r$ and using \eqref{eq:phi-prime-intrinsic}, the two terms cancel and give \eqref{eq:normalized-link-parallel}.

Because $d\eta=0$, choose a local leaf $N$ of $\Dcal$ and use the flow of
$e_0$ to define
$$
        \chi:(-\delta,\delta)\times N\longrightarrow\mathcal O,
        \qquad
        \chi(s,p)=\varphi_s(p).
$$
Cartan's formula gives $\mathcal L_{e_0}\eta=0$, so the flow preserves
$\Dcal$; hence $\chi_*\partial_s=e_0$ and
$\chi_*(TN)=\Dcal$.  Proposition~\ref{prop:eta-closed} also shows that
$b/r^2$ is constant along every $\Dcal$-leaf, so $\phi$ may be chosen as a
function of $s$ alone.  Equation \eqref{eq:normalized-link-parallel} then
shows that
$$
        \psi(p)=\ee^{-i\phi(s)}\frac{X(\chi(s,p))}{r(s)}
$$
is independent of $s$.  This proves \eqref{eq:local-product-reconstruction}.
It takes values in the unit sphere because $|X|=r$.  For $W\in T_pN$,
$$
        \psi_*W
        =\frac{\ee^{-i\phi(s)}}{r(s)}X_*\bigl(\chi_*W\bigr).
$$
Since $\chi$ is a local diffeomorphism, $X$ is an immersion, and $r(s)>0$,
this formula shows that $\psi_*W=0$ only if $W=0$.  Thus $\psi$ is an
immersion.  Moreover,
$$
        \langle J\psi,\psi_*W\rangle
        =\frac1{r^2}\langle JX,X_*\bigl(\chi_*W\bigr)\rangle
        =\frac1{r^2}\lambda(\chi_*W)=0,
$$
because $\chi_*W\in\Dcal$.  Hence $\psi$ is Legendrian.

Finally,
$$
\begin{aligned}
        \gamma'
        &=\ee^{i\phi}(r'+ir\phi')\\
        &=\ee^{i\phi}\left(\frac ar+i\frac br\right).
\end{aligned}
$$
Since $a^2+b^2=r^2$, $|\gamma'|=1$.  Proposition~\ref{prop:intrinsic-anciaux} gives the self-similar system.
\end{proof}

\section{Global classification}\label{sec:global-classification}

We now assume all hypotheses of Theorem~\ref{thm:boundary-components}.  Fix a connected boundary component $\Sigma_0$, let $\theta_0$ be its contact angle, and choose the self-similar Legendrian collar of Lemma~\ref{lem:anciaux-collar}.  On this collar we orient the rank-one line field by the inward unit conormal:
\begin{equation}\label{eq:global-e0-initial}
        e_0=-\nu
        \qquad\ \text{on }\Sigma_0.
\end{equation}
At $r=1$, the decomposition \eqref{eq:X-a-b} and the capillary condition give
\begin{equation}\label{eq:initial-a-b}
        a(0)=-\sin\theta_0,
        \qquad
        b(0)=-\cos\theta_0.
\end{equation}
Hence the first integral on the collar is
\begin{equation}\label{eq:E-contact-angle-initial}
        E=-\ee^{-\varepsilon/2}\cos\theta_0.
\end{equation}

\subsection{Global nondegeneracy and radial dynamics}
If $\theta_0=\frac{\pi}{2}$, then
\cite[Theorem~1.3]{GaoMaYao2026} implies that $X$ is a
diffeomorphism onto an equatorial Lagrangian $n$-disk. In particular,
$\partial M$ is connected. From now on, we assume
\begin{equation}\label{eq:nonfree-assumption}
\theta_0\neq\frac{\pi}{2}.
\end{equation}
Let
$$
        \Omega=\{q\in M:V(q)\ne0\}.
$$
The collar attached to $\Sigma_0$ lies in $\Omega$, because $b(0)=-\cos\theta_0\ne0$.  Let $\Omega_0$ be the connected component of $\Omega$ containing this collar.  On $\Omega_0$, 
define
\[
        e_0=\frac{\operatorname{sgn}(E)}{|V|}V.
\]
By \eqref{eq:initial-a-b} and \eqref{eq:E-contact-angle-initial},
\[
E=\ee^{-\varepsilon/2}b(0).
\]
Thus $\operatorname{sgn}(E)=\operatorname{sgn}(b(0))$, and this definition of $e_0$ agrees with the collar orientation \eqref{eq:global-e0-initial}. Hence all the formulas of Section~\ref{sec:rank-one} apply on $\Omega_0$, with $E$ given by  \eqref{eq:E-contact-angle-initial}.

\begin{proposition}\label{prop:global-nondegeneracy}
Under \eqref{eq:nonfree-assumption},
\begin{equation}\label{eq:V-nonzero-global}
        V=(JX)^\top\ne0
        \qquad\text{everywhere on }M.
\end{equation}
Consequently $X\ne0$, the unit vector field $e_0$ is globally defined, and the functions $a,b,r$ and the constant $E$ extend smoothly to all of $M$.
\end{proposition}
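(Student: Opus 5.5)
The plan is an open–closed argument for the set $\Omega_0$, using the first integral $E$ as the quantity that prevents $V$ from degenerating.

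\textbf{Step 1: the first integral on $\Omega_0$.} By Proposition~\ref{prop:global-rank-one} and Lemma~\ref{lem:X-U-V}, $U$ and $V$ are collinear everywhere. On $\Omega_0$ we have $V\ne0$, hence $X\ne0$ and $r>0$, and $e_0=\operatorname{sgn}(E)V/|V|$ is a smooth unit field spanning the line. So Proposition~\ref{prop:intrinsic-anciaux} applies on $\Omega_0$. Since $\Omega_0$ is connected and contains the collar, the first integral satisfies
\[
        r^{n-1}b\,\ee^{-\varepsilon r^2/2}=E=-\ee^{-\varepsilon/2}\cos\theta_0\neq0
\]
throughout $\Omega_0$, where $b=\langle V,e_0\rangle=\operatorname{sgn}(E)|V|$. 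This gives
\[
        |V|\,r^{n-1}=|E|\,\ee^{\varepsilon r^2/2}.
\]

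\textbf{Step 2: a uniform lower bound for $|V|$.} Because $X(M)\subset\overline{\B}^{2n}$, we have $r\le1$ on $M$. Together with $|V|\le r$ from \eqref{eq:r-UV}, the identity of Step 1 yields
\[
        r^n\ \ge\ |V|\,r^{n-1}=|E|\,\ee^{\varepsilon r^2/2}\ \ge\ |E|\,\ee^{-1/2}.
\]
Hence $r\ge c_1>0$ on $\Omega_0$. Using $r\le 1$ once more,
\[
        |V|\ \ge\ |E|\,\ee^{\varepsilon r^2/2}\ \ge\ |E|\,\ee^{-1/2}=:c_2>0
\]
on $\Omega_0$. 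The point is that this bound is uniform and depends only on $\theta_0$ and $\varepsilon$.

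\textbf{Step 3: $\Omega_0$ is open and closed.} Openness holds because $\Omega_0$ is a connected component of the open set $\Omega$, and components of open subsets of manifolds are open. For closedness, take $q\in\overline{\Omega_0}$. By continuity of $V$ and Step 2, $|V(q)|\ge c_2>0$, so $q\in\Omega$. A small connected neighborhood of $q$ inside $\Omega$ meets $\Omega_0$, so it lies in the same component, and therefore $q\in\Omega_0$. Boundary points of $M$ cause no trouble: $\Omega$ is open in $M$ as a manifold with boundary, and half-ball neighborhoods are connected.

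\textbf{Step 4: conclusion.} Since $M$ is connected, $\Omega_0=M$, which proves \eqref{eq:V-nonzero-global}. Then $X\ne0$ by \eqref{eq:r-UV}, $e_0$ is globally defined, $a=\langle U,e_0\rangle$, $b=\langle V,e_0\rangle$ and $r$ are smooth on $M$, and $E$ is constant on $M$.

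\textbf{Expected obstacle.} The only delicate point is making sure the first integral really holds on all of $\Omega_0$, including at points where $a=0$ or where $\Theta$ winds. The second expression $r^{n-1}b\,\ee^{-\varepsilon r^2/2}$ in \eqref{eq:intrinsic-first-integral} avoids choosing $\Theta$, and its derivative computation in Proposition~\ref{prop:intrinsic-anciaux} uses only $a'$, $b'$ and $r'=a/r$. That computation is valid wherever $e_0$ is smooth and $r>0$, which is exactly $\Omega_0$.
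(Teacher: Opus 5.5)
Your proposal is correct and follows the paper's proof essentially step for step. On $\Omega_0$, the first integral $r^{n-1}b\,\ee^{-\varepsilon r^2/2}=E\neq0$ together with $r\le1$ gives a uniform positive lower bound for $|V|$; this makes $\Omega_0$ closed as well as open, hence equal to $M$. Your intermediate bound $r\ge c_1$ in Step 2 is unnecessary, since $r^{-(n-1)}\ge1$ and $\ee^{\varepsilon r^2/2}\ge\ee^{-1/2}$ already give $|V|\ge\ee^{-1/2}|E|$ directly. That is the same estimate as the paper's constant $c_\varepsilon$.
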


\begin{proof}
On $\Omega_0$ the oriented decomposition $V=b e_0$ is available and the
constant $E$ has the nonzero value in \eqref{eq:E-contact-angle-initial}.  The
second expression in \eqref{eq:intrinsic-first-integral} yields
\begin{equation*}
        b=E\frac{\ee^{\varepsilon r^2/2}}{r^{n-1}}.
\end{equation*}
In particular, $b$ has the fixed sign of $E$ throughout $\Omega_0$.  Since
$0<r\le1$,
$$
        \frac{\ee^{\varepsilon r^2/2}}{r^{n-1}}
        \ge c_\varepsilon,
        \qquad
        c_\varepsilon:=\exp\!\left(\frac{\min\{\varepsilon,0\}}2\right)>0.
$$
Indeed, $r^{-(n-1)}\ge1$; if $\varepsilon\ge0$, the exponential is at least one,
whereas if $\varepsilon<0$, then $\varepsilon r^2/2\ge\varepsilon/2$.  Hence
\begin{equation}\label{eq:b-lower-bound-E}
        |V|=|b|\ge c_\varepsilon|E|>0
        \qquad\ \text{on }\Omega_0.
\end{equation}

We prove that $\Omega_0$ is closed in $M$.  Let $q_j\in\Omega_0$ converge to
$q\in M$.  Continuity of $V$ and \eqref{eq:b-lower-bound-E} imply
$$
        |V(q)|\ge c_\varepsilon|E|>0,
$$
so $q\in\Omega$.  Choose a connected coordinate neighborhood
$U\Subset\Omega$ of $q$.  For all sufficiently large $j$, $q_j\in U$.  Since
$U$ is connected and meets $\Omega_0$, it is contained in the connected
component $\Omega_0$ of $\Omega$.  Thus $q\in\Omega_0$.  Therefore $\Omega_0$
is both open and closed in the connected manifold $M$, and it is nonempty
because it contains the boundary collar.  Hence $\Omega_0=M$, proving
\eqref{eq:V-nonzero-global}.

If $X(q)=0$ at some point, Lemma~\ref{lem:X-U-V} and
\eqref{eq:r-UV} would give $U(q)=V(q)=0$, contradicting the conclusion just
proved.  Hence $r=|X|$ is everywhere positive.  The definition of $e_0$ above is therefore valid and smooth on all of $M$. Set
$$
        b=\langle V,e_0\rangle,
        \qquad
        a=\langle U,e_0\rangle.
$$
Then
$r=(a^2+b^2)^{1/2}$ is smooth globally, and the
identities of Section~\ref{sec:rank-one} continue without further choices.
\end{proof}

The contact-angle normalization obtained from the Hopf boundary point lemma
after \eqref{eq:intro-ball} implies that the global flow starts transversely
from the boundary.

\begin{lemma}\label{lem:no-zero-angle}
In the non-free case,
\begin{equation}\label{eq:theta0-open}
        0<\theta_0<\pi,
\end{equation}
and therefore
\begin{equation}\label{eq:rprime-initial-negative}
        r'(0)=-\sin\theta_0<0.
\end{equation}
\end{lemma}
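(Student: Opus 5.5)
The argument runs entirely through the Hopf-lemma normalization stated after \eqref{eq:intro-ball}, combined with the collar description. For the first claim, \eqref{eq:theta0-open}, I will recall that $\rho=|X|^2/2$ satisfies
\[
\Delta\rho=n-\varepsilon|X^\perp|^2\ge n-1>0
\]
on $M$. Moreover $\rho\le 1/2$ on $M$, with equality on $\partial M$. The strong maximum principle then puts interior points strictly inside the ball, and the Hopf boundary point lemma gives $\partial_\nu\rho>0$ on $\Sigma_0$. Since $\partial_\nu\rho=\langle X,\nu\rangle$, inserting the capillary identity $\nu=\sin\theta_0\,X+\cos\theta_0\,JX$ together with $|X|=1$ and $\langle X,JX\rangle=0$ gives $\langle X,\nu\rangle=\sin\theta_0$. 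Hence $\sin\theta_0>0$, and with the representative chosen in $(0,\pi)$ this is exactly \eqref{eq:theta0-open}. The non-free assumption is not needed for this step; it only enters so that $e_0$, $a$, $b$, $r$ are the globally defined objects of Proposition~\ref{prop:global-nondegeneracy}.

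For \eqref{eq:rprime-initial-negative}, I will compute $r'(0)$ in two independent ways as a consistency check. The first uses Proposition~\ref{prop:intrinsic-anciaux}: there $rr'=a$, and at the boundary $r(0)=1$ while \eqref{eq:initial-a-b} gives $a(0)=-\sin\theta_0$. Therefore $r'(0)=-\sin\theta_0$. The second is directly geometric: $e_0=-\nu$ on $\Sigma_0$ by \eqref{eq:global-e0-initial}, so
\[
r'(0)=d\rho(e_0)=-\langle X,\nu\rangle=-\sin\theta_0 .
\]
Either way, the first part of the lemma gives $r'(0)<0$. In words, the radial function strictly decreases as one moves inward along the $e_0$-geodesic emanating from $\Sigma_0$, so the global flow leaves the boundary transversely.

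The only point needing care is orientation bookkeeping. I have to confirm that the globally defined $e_0=\operatorname{sgn}(E)V/|V|$ agrees with $-\nu$ on $\Sigma_0$; this consistency was already checked in the text preceding Proposition~\ref{prop:global-nondegeneracy}. I also have to confirm that $a=\langle U,e_0\rangle$ at the boundary equals $\langle X,-\nu\rangle$, which follows from $U=X^\top$. Finally, the Hopf lemma needs $\Delta\rho>0$ up to the boundary, and this holds because $|X^\perp|\le 1$ on the closed ball. I expect no genuine obstacle here; the lemma is essentially a restatement of the boundary normalization in the language of the intrinsic flow.
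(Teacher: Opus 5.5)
Your proposal is correct and follows the paper's proof. The bound $0<\theta_0<\pi$ is exactly the Hopf-lemma normalization $\sin\theta_0=\partial_\nu\rho>0$ from the introduction, and $r'(0)=a(0)/r(0)=-\sin\theta_0$ follows from \eqref{eq:initial-a-b}; your second computation via $d\rho(e_0)=-\langle X,\nu\rangle$ is a valid but redundant check of the same identity.
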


\begin{proof}
The normalization above gives \eqref{eq:theta0-open}.  Then
\eqref{eq:initial-a-b} and $r'=a/r$ give
\eqref{eq:rprime-initial-negative}.
\end{proof}

For later use, define
\begin{equation*}
        h_{\varepsilon,n}(r)=r^n\ee^{-\varepsilon r^2/2}.
\end{equation*}
For $0<r\le1$,
\begin{equation}\label{eq:h-n-increasing}
        h_{\varepsilon,n}'(r)=r^{n-1}\ee^{-\varepsilon r^2/2}(n-\varepsilon r^2)>0,
\end{equation}
so $h_{\varepsilon,n}$ is strictly increasing.

The first integral reduces the radial equation to one scalar first-order equation.
Since $E\ne0$ and $0<\theta_0<\pi$,
\begin{equation*}
        0<|E|<h_{\varepsilon,n}(1)=\ee^{-\varepsilon/2}.
\end{equation*}
By strict monotonicity \eqref{eq:h-n-increasing}, there is a unique
\begin{equation*}
        r_*\in(0,1)
        \qquad\text{such that}\qquad
        h_{\varepsilon,n}(r_*)=|E|.
\end{equation*}
The first integral gives
\begin{equation*}
        \sin\Theta=\frac{E}{h_{\varepsilon,n}(r)}
\end{equation*}
after fixing the continuous branch determined by the collar.  Hence
\begin{equation}\label{eq:r-first-order}
        (r')^2
        =1-\frac{E^2\ee^{\varepsilon r^2}}{r^{2n}}
        =1-\frac{E^2}{h_{\varepsilon,n}(r)^2}.
\end{equation}
In particular,
\begin{equation*}
        r\ge r_*>0.
\end{equation*}

\begin{lemma}\label{lem:phase-monotonicity}
Along every non-free orbit, $\sin\Theta$ has the constant sign of $E$, and
\begin{equation}\label{eq:theta-monotonicity-sign}
        \operatorname{sgn}(\Theta')=-\operatorname{sgn}(E).
\end{equation}
If $E>0$, then along the decreasing branch,
$$
        \frac\pi2<\Theta<\pi.
$$
If a radial critical point occurs, it is characterized by $\Theta=\pi/2$;
along any subsequent increasing branch,
$$
        0<\Theta<\frac\pi2.
$$
If $E<0$, the corresponding intervals are obtained by replacing $\Theta$ with
$-\Theta$.  In particular, an orbit has at most one radial critical point in
$0<r<1$.
\end{lemma}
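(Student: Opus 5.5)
The plan rests on the first integral in the form $\sin\Theta=E/h_{\varepsilon,n}(r)$ from Proposition~\ref{prop:intrinsic-anciaux}, on the positivity $0<r\le1$, and on the profile system \eqref{eq:intrinsic-anciaux-system}. Since $h_{\varepsilon,n}(r)>0$, the sign of $\sin\Theta$ equals the sign of $E$ at every point of the orbit; this is immediate. For \eqref{eq:theta-monotonicity-sign}, I would substitute into $\Theta'=(\varepsilon r-n/r)\sin\Theta$. Because $r\le1$ and $n\ge2$, we have $\varepsilon r^2\le1<n$, so $\varepsilon r-n/r<0$ strictly. Hence $\operatorname{sgn}\Theta'=-\operatorname{sgn}\sin\Theta=-\operatorname{sgn}E$.

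Next, for $E>0$, I would fix the branch of $\Theta$ using the collar data \eqref{eq:anciaux-initial-polar-data}, which give $\Theta(0)=3\pi/2-\theta_0$ modulo $2\pi$. If $\theta_0<\pi/2$, then $E=-\ee^{-\varepsilon/2}\cos\theta_0<0$; so $E>0$ corresponds to $\theta_0\in(\pi/2,\pi)$, and then $\Theta(0)\in(\pi/2,\pi)$ after the harmless choice of representative. On $(0,\pi)$, the sign condition $\sin\Theta>0$ confines $\Theta$ continuously to $(0,\pi)$ for the whole orbit, so $\Theta$ never crosses a multiple of $\pi$. Since $\Theta'<0$, $\Theta$ strictly decreases from its initial value in $(\pi/2,\pi)$. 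The decreasing branch is characterized by $r'=\cos\Theta<0$, i.e.\ $\Theta\in(\pi/2,\pi)$. A radial critical point means $r'=\cos\Theta=0$, which inside $(0,\pi)$ forces $\Theta=\pi/2$. After that point, strict decrease gives $\Theta<\pi/2$, and $\sin\Theta>0$ gives $\Theta>0$, so $\Theta\in(0,\pi/2)$ and $r'>0$.

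The case $E<0$ follows from the same argument applied to $-\Theta$. Indeed, the system \eqref{eq:intrinsic-anciaux-system} is invariant under $\Theta\mapsto-\Theta$ with $E\mapsto -E$, and $\Theta(0)\equiv 3\pi/2-\theta_0\in(-\pi,-\pi/2)$ modulo $2\pi$ when $\theta_0<\pi/2$. Finally, uniqueness of the radial critical point follows because $\Theta$ is strictly monotone and confined to an interval of length $\pi$, in which $\cos\Theta$ vanishes only at $\pm\pi/2$; so $\Theta$ reaches that value at most once.

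The main subtlety I expect is the phrase "along every non-free orbit". I would interpret it as every integral curve of the global field $e_0$ from Proposition~\ref{prop:global-nondegeneracy}, on which $E$ is the same global constant. The initial branch of $\Theta$ is then fixed at the boundary point where the orbit starts, and by continuity it is fixed on any orbit through the connected $M$ (which avoids $\sin\Theta=0$ everywhere since $V\ne0$). With that interpretation the argument above applies verbatim.
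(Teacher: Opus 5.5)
Your proposal is correct and follows essentially the same route as the paper: the first integral fixes the sign of $\sin\Theta$, the strict negativity of $\varepsilon r-n/r$ on $(0,1]$ gives the monotonicity of $\Theta$, and strict monotonicity inside an interval of length $\pi$ allows at most one crossing of $\cos\Theta=0$. The only difference is cosmetic. You place $\Theta(0)$ in $(\pi/2,\pi)$ using the explicit collar value $3\pi/2-\theta_0$ with $\theta_0\in(\pi/2,\pi)$, whereas the paper uses $r'(0)<0$ together with $\sin\Theta(0)>0$; these are the same facts, and your continuity argument confining $\Theta$ to $(0,\pi)$ simply makes explicit what the paper leaves implicit.
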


\begin{proof}
The first integral and positivity of $h_{\varepsilon,n}(r)$ give
$$
        \sin\Theta=\frac{E}{h_{\varepsilon,n}(r)},
$$
so $\sin\Theta$ never vanishes and has the fixed sign of $E$.  Moreover,
$0<r\le1$, $n\ge2$, and $\varepsilon\in\{-1,0,1\}$ imply
$$
        \varepsilon r-\frac nr
        =\frac{\varepsilon r^2-n}{r}<0.
$$
The second equation in \eqref{eq:intrinsic-anciaux-system} therefore proves
\eqref{eq:theta-monotonicity-sign}.

Assume first that $E>0$.  At the initial boundary, $r'(0)<0$, so
$\cos\Theta(0)<0$, while $\sin\Theta(0)>0$.  After choosing the continuous
representative of the angle, this places $\Theta(0)$ in $(\pi/2,\pi)$.  Since
$\Theta$ is strictly decreasing, the first possible zero of $r'=\cos\Theta$
is reached at $\Theta=\pi/2$.  Once $\Theta$ has crossed $\pi/2$, one has
$\cos\Theta>0$, and strict monotonicity prevents a return to $\pi/2$.  Thus
there is at most one radial critical point.  The case $E<0$ follows by the
symmetry $(E,\Theta)\mapsto(-E,-\Theta)$ of the system.
\end{proof}

For the endpoint and inverse-flow arguments below, fix a smooth double
$\widehat M$ of $M$ (see \cite{Munkres}) and extend $e_0$ to a globally
defined smooth vector field $\widehat e_0$ on $\widehat M$.  Since
$\widehat M$ is compact, $\widehat e_0$ has a complete flow
$\widehat\varphi_s$.  Whenever such an orbit lies in $M$, it is independent
of the chosen extension, and we denote the restricted flow by $\varphi_s$.

\begin{proposition}\label{prop:radial-behavior}
Every integral curve of the global vector field $e_0$, issued from $\Sigma_0$ at time $s=0$, has the same radial function $r(s)$.  This function is defined on a common interval $[0,L]$, satisfies
\begin{equation}\label{eq:r-global-behavior}
        r(0)=r(L)=1,
        \qquad
        r_*\le r(s)<1\quad(0<s<L),
\end{equation}
reaches its unique minimum $r_*$ once, and obeys
\begin{equation}
        r'(0)<0,
        \qquad
        r'(L)>0.
\end{equation}
The return time is
\begin{equation}\label{eq:L-integral}
        L
        =2\int_{r_*}^{1}
        \frac{dr}{\sqrt{1-E^2\ee^{\varepsilon r^2}/r^{2n}}}<\infty.
\end{equation}
\end{proposition}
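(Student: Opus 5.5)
Fix $p\in\Sigma_0$ and let $c_p(s)=\widehat\varphi_s(p)$ be the integral curve of $\widehat e_0$. As long as $c_p$ stays in $M$, it is an integral curve of the global field $e_0$ of Proposition~\ref{prop:global-nondegeneracy}. Along it, the pair $(r,\Theta)$ defined by \eqref{eq:theta-a-b} solves the autonomous system \eqref{eq:intrinsic-anciaux-system}, by Proposition~\ref{prop:intrinsic-anciaux} applied on $\mathcal O=M$. At $s=0$ the initial data are $r(0)=1$ and $(a(0),b(0))=(-\sin\theta_0,-\cos\theta_0)$ from \eqref{eq:initial-a-b}, and these do not depend on $p$. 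Uniqueness for the smooth ODE on $r>0$ then shows that $(r,\Theta)(s)$ is the same function for every $p$. This gives the common radial function, and on $[0,\delta)$ it agrees with the collar profile of Lemma~\ref{lem:anciaux-collar}.

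Next comes the one-dimensional dynamics, which I study independently of $M$. The maximal solution of the planar system with these data exists as long as $r$ stays positive. By the first integral, $r\ge r_*>0$ as long as $r\le1$. Lemma~\ref{lem:phase-monotonicity} gives $\Theta$ strictly monotone with $\sin\Theta$ of fixed sign. By \eqref{eq:r-first-order}, on the decreasing branch we have $r'=-\sqrt{1-E^2/h_{\varepsilon,n}(r)^2}$. The integral $\int_{r_*}^1 dr/\sqrt{1-E^2/h^2}$ is finite because $h_{\varepsilon,n}'(r_*)>0$ by \eqref{eq:h-n-increasing}, so the integrand is $O((r-r_*)^{-1/2})$. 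Hence $r$ reaches $r_*$ in finite time $L/2$. There $\cos\Theta=0$ and $\Theta'\ne0$, so $r''\neq0$ and $r$ turns around; equivalently, $r''=\sin\Theta\,\Theta'\cdot(-1)>0$ by the sign rule \eqref{eq:theta-monotonicity-sign}. On the increasing branch, $r$ returns to $1$ after the same time, by symmetry of the quadrature, with $r'(L)=\sqrt{1-E^2/h(1)^2}=\sin\theta_0>0$. Lemma~\ref{lem:phase-monotonicity} rules out any further critical point. This yields \eqref{eq:r-global-behavior}, the uniqueness of the minimum, and \eqref{eq:L-integral}.

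The remaining, and main, step is to show that each actual orbit $c_p$ stays in $M$ on $[0,L]$, so that it realizes this ODE solution on the whole interval, and does not exit through $\partial M$ earlier. I would argue by a first exit time. Let $s_p=\sup\{s: c_p([0,s])\subset M\}$. For $s<s_p$ the radius along $c_p$ equals the ODE solution, by the uniqueness above applied on $[0,s]$. Suppose $s_p<\infty$. Then $c_p(s_p)\in\partial M$ by closedness of $M$ in $\widehat M$, and escape from $M$ into $\widehat M\setminus M$ is only possible through the boundary. So $|X(c_p(s_p))|=1$, because $X(\partial M)\subset\Sph^{2n-1}$. From the ODE picture, the first positive time with $r=1$ is $L$, so $s_p\ge L$.

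It remains to see that $s_p=0$ is impossible, i.e.\ that the orbit enters $M^\circ$ at all. This holds because $e_0=-\nu$ is inward at $\Sigma_0$. Since $X(M^\circ)\subset\B^{2n}$, no interior point has $r=1$. Hence the orbit is inside $M^\circ$ for $0<s<L$ and lands on $\partial M$ exactly at $s=L$. The delicate point is justifying that $c_p$ cannot touch $\partial M$ at some time in $(0,L)$ while staying in $M$; this is excluded by the same fact, since boundary points have $r=1$. Thus $[0,L]$ is common to all $p$, which completes the proof.
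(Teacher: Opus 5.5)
Your proposal is correct and follows the paper's proof essentially step for step. The steps match: ODE uniqueness from the $p$-independent initial data $(r,a,b)=(1,-\sin\theta_0,-\cos\theta_0)$, then the first-integral quadrature with $O((r-r_*)^{-1/2})$ integrability and $r''>0$ at the unique turning point, then a first-exit-time argument that plays $|X|=1$ on $\partial M$ against $r(s)<1$ on $(0,L)$, with the initial entry into $M$ guaranteed by $e_0=-\nu$.
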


\begin{proof}
We divide the argument into four steps.

\smallskip
\noindent\emph{Step 1.}
By Proposition~\ref{prop:eta-closed}, the functions $a$, $b$, and $r$ are
constant along every leaf of $\Dcal$.  On the initial leaf $\Sigma_0$ their
values are
$$
        r(0)=1,
        \qquad
        a(0)=-\sin\theta_0,
        \qquad
        b(0)=-\cos\theta_0.
$$
They are therefore independent of the starting point $p\in\Sigma_0$.  The
intrinsic system is a smooth autonomous ODE on the region $r>0$.  By the
uniqueness and smooth-dependence theorem
\cite[Chapter~1]{CoddingtonLevinson1955}, all $e_0$-orbits issued from
$\Sigma_0$ have the same functions $(r,\Theta,a,b)$ on their common interval
of existence.  It is therefore meaningful to write these functions without a
subscript $p$.

\smallskip
\noindent\emph{Step 2.}
Lemma~\ref{lem:no-zero-angle} gives $r'(0)<0$.  The energy identity
\eqref{eq:r-first-order} implies $r\ge r_*$ and shows that $r'$ can vanish only
when $h_{\varepsilon,n}(r)=|E|$, hence only at $r=r_*$.  By
Lemma~\ref{lem:phase-monotonicity}, such a critical point can occur at most
once.  At that point $|\sin\Theta|=1$, and
$$
\begin{aligned}
        r''
        &=-\sin\Theta\,\Theta'\\
        &=-\left(\varepsilon r-\frac nr\right)\sin^2\Theta\\
        &=\frac nr-\varepsilon r
        \\&>0.
\end{aligned}
$$
Thus the critical point is a nondegenerate strict minimum.  The solution
arrives at $r_*$ in finite time.  To see this directly, put
$$
        G(r)=1-\frac{E^2}{h_{\varepsilon,n}(r)^2}.
$$
Then $G(r_*)=0$ and
$$
        G'(r_*)
        =2\frac{E^2h_{\varepsilon,n}'(r_*)}{h_{\varepsilon,n}(r_*)^3}>0.
$$
Hence $G(r)=G'(r_*)(r-r_*)+O((r-r_*)^2)$, so
$G(r)^{-1/2}=O((r-r_*)^{-1/2})$, which is integrable near \(r_*\). Since \(r'=-\sqrt{G(r)}\) on the
decreasing branch, the time needed to reach \(r_*\) is
\[
T_*=\int_{r_*}^{1}\frac{dr}{\sqrt{G(r)}}<\infty.
\]

\smallskip
\noindent\emph{Step 3.}
After the minimum, \(r'>0\), and no second turning point is
possible. On the decreasing and increasing branches, respectively,
\[
r'=-\sqrt{G(r)}
\qquad\text{and}\qquad
r'=\sqrt{G(r)}.
\]
Hence the travel time from \(r=1\) to \(r=r_*\) and the return
time from \(r=r_*\) to \(r=1\) are both
\(T_*\). Consequently, the first return to \(r=1\) occurs at
\[
L=2T_*,
\]
and \(r'(L)=-r'(0)>0\).

\smallskip
\noindent\emph{Step 4.}
Fix $p\in\Sigma_0$.  Since $e_0=-\nu$ on $\Sigma_0$, the curve
$\widehat\varphi_s(p)$ enters $M$ for sufficiently small positive $s$.
Suppose that it first leaves $M$ at some time $\tau<L$.  Then
$\widehat\varphi_\tau(p)\in\partial M$.  As long as the orbit remains in
$M$, it agrees with the $e_0$-orbit and hence
$$
        |X(\widehat\varphi_s(p))|=r(s).
$$
Therefore
$$
        |X(\widehat\varphi_\tau(p))|=r(\tau)<1,
$$
contradicting $X(\partial M)\subset\Sph^{2n-1}$.  Thus the orbit remains in
$M$ for $0\le s<L$.  Since $M$ is closed in $\widehat M$, its value at time
$L$ also lies in $M$.  By continuity,
$$
        |X(\widehat\varphi_L(p))|=r(L)=1.
$$
Since $|X|<1$ on $M^\circ$, this equality implies
$\widehat\varphi_L(p)\in\partial M$.  We may therefore define
$\varphi_s(p)=\widehat\varphi_s(p)$ for every $0\le s\le L$.
This proves the common interval, the endpoint signs, the uniqueness of the
minimum, and all assertions in \eqref{eq:r-global-behavior}--\eqref{eq:L-integral}.
\end{proof}

The estimates above are uniform for $\varepsilon\in\{-1,0,1\}$.  Indeed,
$n-\varepsilon r^2\ge n-1>0$ on $(0,1]$, so the turning point,
nondegeneracy, and finite return time require no sign-specific argument.

\subsection{Flow-out and the boundary-component theorem}

The remaining task for Theorem~\ref{thm:boundary-components} is global and topological.  At this stage we do not yet reconstruct the immersion as an Anciaux product and we do not use the relation between the two contact angles.  We only prove that the flow of the intrinsic rank-one direction carries the chosen boundary component through the whole manifold and reaches one connected terminal boundary component.  This suffices to complete the boundary count.

Proposition~\ref{prop:radial-behavior} shows that every orbit issued from
$\Sigma_0$ remains in $M$ for
$0\le s\le L$, and therefore permits the definition
\begin{equation}\label{eq:global-flow-map}
        \Phi:[0,L]\times\Sigma_0\longrightarrow M,
        \qquad
        \Phi(s,p)=\varphi_s(p).
\end{equation}

\begin{lemma}\label{lem:flow-preserves-D}
For every $s$,
\begin{equation}\label{eq:flow-preserve-eta}
        \varphi_s^*\eta=\eta
\end{equation}
whenever both sides are defined.  Consequently
$$
        d\varphi_s(\Dcal_p)=\Dcal_{\varphi_s(p)}.
$$
\end{lemma}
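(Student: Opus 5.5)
The plan is to prove \eqref{eq:flow-preserve-eta} by Cartan's formula and then read off the distribution statement. Since $e_0$ is a unit vector field and $\eta=e_0^\flat$, we have $\eta(e_0)\equiv1$, so $d(\iota_{e_0}\eta)=0$. By Proposition~\ref{prop:eta-closed}, extended to all of $M$ through Proposition~\ref{prop:global-nondegeneracy}, $d\eta=0$, so $\iota_{e_0}d\eta=0$ as well. Cartan's formula then gives
\[
        \mathcal L_{e_0}\eta=d(\iota_{e_0}\eta)+\iota_{e_0}d\eta=0
\]
everywhere on $M$.

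Next I integrate this along the flow. Fix a point $q$ and a time $s$ such that $\varphi_t(q)$ stays in $M$ for all $t$ between $0$ and $s$; this is what ``whenever both sides are defined'' means. For such $t$ the standard identity
\[
        \frac{d}{dt}\varphi_t^*\eta=\varphi_t^*\mathcal L_{e_0}\eta=0
\]
holds at $q$. There is one technical point: near $\partial M$ the flow is only one-sided. To avoid this, I will work with the flow $\widehat\varphi_t$ of the extension $\widehat e_0$ on the double $\widehat M$. On orbit segments contained in $M$, the pullback of $\eta$ involves only values of $\eta$ and $e_0$ along the orbit segment in $M$, and there $\widehat e_0=e_0$. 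So the derivative computation is valid, including at boundary points, where everything is defined by continuity of smooth objects up to $\partial M$. Integrating in $t$ from $0$ gives $\varphi_s^*\eta=\eta$ at $q$.

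For the consequence, take $W\in\Dcal_p$. Then
\[
        \eta\bigl(d\varphi_s W\bigr)=(\varphi_s^*\eta)(W)=\eta(W)=0,
\]
so $d\varphi_s(\Dcal_p)\subset\Dcal_{\varphi_s(p)}$. Because $\varphi_s$ is a local diffeomorphism, being the restriction of the diffeomorphism $\widehat\varphi_s$, the differential $d\varphi_s$ is injective. Both spaces have dimension $n-1$, so the inclusion is an equality.

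The only genuinely delicate step is justifying the time-derivative formula for orbits that touch $\partial M$. The double $\widehat M$ handles this, because $\widehat\varphi_t$ is a genuine flow of diffeomorphisms and the relevant data agree with the intrinsic ones along orbits lying in $M$.
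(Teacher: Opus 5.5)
Your proof is correct and follows the paper's argument: Cartan's formula, with $d\eta=0$ and $\eta(e_0)\equiv1$, gives $\mathcal L_{e_0}\eta=0$, which integrates along the flow to $\varphi_s^*\eta=\eta$, and the statement about $\Dcal$ follows by taking kernels. Your use of the double $\widehat M$ to justify the derivative formula along orbits that meet $\partial M$, and your dimension count for equality of the kernels, only make explicit what the paper leaves implicit.
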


\begin{proof}
Cartan's formula and Proposition~\ref{prop:eta-closed} give
\begin{align*}
        \Lcal_{e_0}\eta
        &=\iota_{e_0}d\eta+d(\eta(e_0))
        \\&=0+d(1)
        \\&=0.
\end{align*}
Therefore $\eta$ is invariant under the flow, which proves \eqref{eq:flow-preserve-eta}.  Taking kernels gives invariance of $\Dcal$.
\end{proof}

\begin{lemma}
The map
$$
        F_L:\Sigma_0\longrightarrow\partial M,
        \qquad
        F_L(p)=\varphi_L(p),
$$
is an immersion whose image $\Sigma_1$ is a connected component of
$\partial M$.  Along $\Sigma_1$ the vector field $e_0$ is the outward unit
conormal.
\end{lemma}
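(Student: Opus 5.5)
We show that $F_L$ is an immersion, that $e_0$ is outward along its image, and then that the image is a whole boundary component.

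\emph{Step 1: $F_L$ is an immersion with image in $\partial M$.} Proposition~\ref{prop:radial-behavior} already gives $F_L(\Sigma_0)\subset\partial M$, since $|X(\varphi_L(p))|=r(L)=1$ and $|X|<1$ on $M^\circ$. The map $F_L$ is the restriction of the time-$L$ flow $\widehat\varphi_L$ of the complete field $\widehat e_0$ on the double $\widehat M$. That flow is a diffeomorphism of $\widehat M$, so $F_L$ is an injective immersion of $\Sigma_0$. By Lemma~\ref{lem:flow-preserves-D}, $d\varphi_L$ maps $\Dcal_p=T_p\Sigma_0$ onto $\Dcal_{\varphi_L(p)}$. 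Along $\partial M$ the distribution $\Dcal=e_0^\perp$ is tangent to the boundary, because every $W\in\Dcal$ satisfies $d\rho(W)=a\eta(W)=0$. At $\varphi_L(p)$ we also have $a=rr'=r'(L)>0$, so $d\rho\neq0$ there and $T\partial M=\ker d\rho=\Dcal$.

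\emph{Step 2: $e_0$ is the outward unit conormal along the image.} At $q=\varphi_L(p)$ the vector $e_0$ is a unit vector orthogonal to $T_q\partial M=\Dcal_q$. It satisfies $d\rho(e_0)=a>0$. Since $\rho<1/2$ in $M^\circ$ and $\rho=1/2$ on $\partial M$, the gradient of $\rho$ at the boundary points outward. Hence $e_0=\nu$ there.

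\emph{Step 3: the image is a connected component of $\partial M$.} The image $F_L(\Sigma_0)$ is connected and compact because $\Sigma_0$ is. It is open in $\partial M$ because $F_L$ is an immersion between manifolds of the same dimension $n-1$, and invariance of domain applies. A nonempty subset of $\partial M$ that is open, closed and connected is a connected component, which we call $\Sigma_1$.

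The main obstacle is making sure the orbit genuinely exits at time $L$ rather than merely touching $\partial M$ tangentially. This is settled by $r'(L)>0$: it gives $d\rho(e_0)>0$ at the endpoint, and this transversality drives both Step 1 and Step 2.
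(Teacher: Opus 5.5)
Your proposal is correct and follows essentially the same route as the paper: flow-invariance of $\Dcal$ (Lemma~\ref{lem:flow-preserves-D}), the transversality $r'(L)>0$ identifying $e_0$ with the outward conormal $\nu$ and $\Dcal$ with $T\partial M$, and then the open--closed--connected argument for the image. Your extra remarks (injectivity via the time-$L$ flow on the double, and invariance of domain where the inverse function theorem already suffices) are harmless but not needed for the statement.
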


\begin{proof}
Proposition~\ref{prop:radial-behavior} shows that $\varphi_L(p)$ is defined and
lies on $\partial M$ for every $p\in\Sigma_0$.  By
Lemma~\ref{lem:flow-preserves-D},
$$
        d\varphi_L(T_p\Sigma_0)=\Dcal_{\varphi_L(p)}.
$$
At the terminal point,
$$
        \nabla\rho=rr'e_0=r'(L)e_0,
$$
with $r'(L)>0$.  Since $\rho<1/2$ in the interior and $\rho=1/2$ on the
boundary, $\nabla\rho$ is a positive multiple of the outward conormal.
Therefore $e_0=\nu$ and
$$
        \Dcal=e_0^\perp=T\partial M
$$
along the image.  It follows that $F_L$ is a local diffeomorphism from
$\Sigma_0$ onto an open subset of $\partial M$.  Its image is compact and
therefore closed in $\partial M$.  Since $\Sigma_0$ is connected, the image is
connected; hence it is exactly one connected component, denoted $\Sigma_1$.
\end{proof}

\begin{proposition}\label{prop:global-diffeomorphism}
The map $\Phi$ in \eqref{eq:global-flow-map} is a diffeomorphism.  In particular,
\begin{equation}\label{eq:two-boundaries}
        \partial M=\Sigma_0\sqcup\Sigma_1,
        \qquad
        \Sigma_1=\Phi(\{L\}\times\Sigma_0).
\end{equation}
\end{proposition}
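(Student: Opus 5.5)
We will show that $\Phi$ is a local diffeomorphism, that it is injective, and that its image is open and closed in $M$; connectedness of $M$ then gives surjectivity.

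\emph{Local diffeomorphism.} At $(s,p)$ with $0\le s\le L$, the differential of $\Phi$ sends $\partial_s$ to $e_0(\Phi(s,p))$. By Lemma~\ref{lem:flow-preserves-D} it maps $T_p\Sigma_0=\Dcal_p$ isomorphically onto $\Dcal_{\varphi_s(p)}=e_0^\perp$. Hence $d\Phi$ is an isomorphism everywhere, including at the two boundary slices. At $s=0$ and $s=L$ the vector $e_0$ is respectively the inward and the outward conormal. So $\Phi$ is a local diffeomorphism of manifolds with boundary, and it sends interior to interior, since $|X\circ\Phi|=r(s)<1$ for $0<s<L$.

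\emph{Injectivity.} This is the step I expect to be the main obstacle. Suppose $\varphi_s(p)=\varphi_{s'}(p')$. The radius is $r(s)=r(s')$, and the value of $a=rr'$ distinguishes the decreasing branch from the increasing one. Since the function $s\mapsto(r(s),a(s))$ is injective on $[0,L]$ by Proposition~\ref{prop:radial-behavior}, we get $s=s'$. Flowing backwards along $e_0$, using uniqueness of integral curves of the smooth field $\widehat e_0$ on the double, then gives $p=\varphi_{-s}(\varphi_s(p))=p'$. For $s=0$ this is immediate. The backward flow stays in $M$ along the orbit segment, because that segment is exactly $\{\varphi_t(p):0\le t\le s\}$.

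\emph{Open and closed image.} The image $\Phi([0,L]\times\Sigma_0)$ is compact, hence closed. At interior points of $[0,L]\times\Sigma_0$ the image is open by the inverse function theorem. At a point of $\{0\}\times\Sigma_0$ or $\{L\}\times\Sigma_0$, the image contains a one-sided neighborhood in $M$ of the boundary point $q$. It therefore contains a full neighborhood of $q$ in $M$, because $\Sigma_0$ and $\Sigma_1$ are boundary components and $M$ lies on one side only. Thus the image is open in $M$.

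\emph{Conclusion.} Connectedness of $M$ gives surjectivity, so $\Phi$ is a bijective local diffeomorphism, hence a diffeomorphism. A diffeomorphism maps boundary onto boundary, so $\partial M=\Phi(\{0,L\}\times\Sigma_0)=\Sigma_0\sqcup\Sigma_1$. These two components are disjoint because $\Phi$ is injective; in particular $F_L$ is an embedding onto $\Sigma_1$. This gives \eqref{eq:two-boundaries}.
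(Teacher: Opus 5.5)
Your proof is correct and follows the paper's structure: $\Phi$ is a local diffeomorphism because flow-invariance of $\Dcal$ holds and $e_0=-\nu$ on $\Sigma_0$, $e_0=\nu$ on $\Sigma_1$; $\Phi$ is injective; and the image is compact, open and nonempty in the connected $M$. The one variation is in injectivity: you recover the flow time from the global functions $(r,a)=(|X|,\,d\rho(e_0))$, which separate the two strictly monotone branches of $r$ since $a=rr'$ is negative before and positive after the unique minimum, whereas the paper flows back by the smaller time, excluding $0<t-s<L$ via $|X|<1$ in the interior and $t-s=L$ via $e_0=-\nu$ versus $e_0=\nu$ — the same information packaged differently.
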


\begin{proof}
We first show that $\Phi$ is a local diffeomorphism.  Its differential sends $\partial_s$ to $e_0$.  By Lemma~\ref{lem:flow-preserves-D}, it maps $T_p\Sigma_0=\Dcal_p$ isomorphically onto $\Dcal_{\Phi(s,p)}$.  Since
$$
        T_{\Phi(s,p)}M
        =\R e_0\oplus\Dcal_{\Phi(s,p)},
$$
the differential is an isomorphism.  At $s=0$, $e_0=-\nu$ is the inward conormal of $M$, and the inward
conormal of the product domain at $\{0\}\times\Sigma_0$ is $\partial_s$.
At $s=L$, $r'(L)>0$ and
$$
        \nabla\rho=rr'e_0,
$$
so $e_0=\nu$ is the outward conormal of $M$, and the outward conormal of the
product domain at $\{L\}\times\Sigma_0$ is again $\partial_s$.  Hence
$d\Phi$ maps boundary tangent spaces to boundary tangent spaces and inward
half-spaces to inward half-spaces at both faces.  The inverse function theorem
for manifolds with boundary applies, so $\Phi$ is a local diffeomorphism on
the whole product.

We next prove injectivity.  Suppose
$$
        \Phi(s,p)=\Phi(t,q),
        \qquad
        0\le s\le t\le L.
$$
Applying the inverse flow for time $s$,
\begin{equation}\label{eq:injectivity-backflow}
        p=\varphi_{t-s}(q).
\end{equation}
If $0<t-s<L$, then Proposition~\ref{prop:radial-behavior} gives
$$
        |X(\varphi_{t-s}(q))|=r(t-s)<1,
$$
so the right-hand side of \eqref{eq:injectivity-backflow} lies in $M^\circ$, whereas $p\in\Sigma_0\subset\partial M$.  This is impossible.

If $t-s=L$, then necessarily $s=0$ and $t=L$.  Equality would identify a point of the initial boundary with a point of the terminal boundary.  At a point of $\Sigma_0$, the globally fixed vector field satisfies $e_0=-\nu$.  At a terminal point, the preceding transversality calculation gives $e_0=\nu$.  The same boundary point cannot satisfy both identities.  Therefore $t-s\ne L$.  We conclude that $s=t$, and uniqueness of the flow gives $p=q$.  Thus $\Phi$ is injective.

Because $[0,L]\times\Sigma_0$ is compact, the image of $\Phi$ is compact and hence closed in $M$.  Since $\Phi$ is a local diffeomorphism, its image is also relatively open in $M$, including near the boundary by the transversality just established.  The image is nonempty, and $M$ is connected, so the image is all of $M$.  Thus $\Phi$ is a bijective local diffeomorphism.  A bijective local diffeomorphism between compact manifolds with boundary is a diffeomorphism, proving the proposition and \eqref{eq:two-boundaries}.
\end{proof}

\begin{proof}[Proof of Theorem~\ref{thm:boundary-components}]
Fix a boundary component $\Sigma_0$.  If its contact angle is $\pi/2$,
\cite[Theorem~1.3]{GaoMaYao2026} implies that $X$ is a
diffeomorphism onto an equatorial Lagrangian $n$-disk, and hence $\partial M$
is connected.  If its contact angle is not $\pi/2$, Propositions~\ref{prop:global-nondegeneracy},~\ref{prop:radial-behavior}, and~\ref{prop:global-diffeomorphism} give a diffeomorphism $[0,L]\times\Sigma_0\longrightarrow M.$
Hence $\partial M$ has exactly the two components $\{0\}\times\Sigma_0$ and $\{L\}\times\Sigma_0$.  Thus in all cases
\(\partial M\) has at most two connected components,
which proves Theorem~\ref{thm:boundary-components}.
\end{proof}

\subsection{Proof of the global classification}

Theorem~\ref{thm:boundary-components} has now been proved.  We therefore start the classification from the established dichotomy that the boundary has one or two connected components.  The one-boundary case has been settled in \cite{GaoMaYao2026}.

\begin{proposition}[{\cite[Corollary~3.8]{GaoMaYao2026}}]
\label{prop:one-boundary-classification}
If $\partial M$ is connected, then its contact angle is $\pi/2$ and $X$ is a diffeomorphism onto an equatorial Lagrangian $n$-disk.
\end{proposition}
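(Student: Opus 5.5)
The proof is a contrapositive argument: everything needed is already in Section~\ref{sec:global-classification}. Suppose $\partial M$ is connected, so $\partial M=\Sigma_0$ for a single component with contact angle $\theta_0\in(0,\pi)$. We show that $\theta_0=\pi/2$. The disk conclusion then follows at once from \cite[Theorem~1.3]{GaoMaYao2026}, the free-boundary rigidity already invoked in the proof of Theorem~\ref{thm:boundary-components}.

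To get $\theta_0=\pi/2$, assume instead that $\theta_0\neq\pi/2$, which is exactly \eqref{eq:nonfree-assumption}. Then the chain of results proved above applies verbatim.

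\begin{enumerate}[label=\rm(\alph*)]
\item Proposition~\ref{prop:global-nondegeneracy} makes $e_0$ a global field. The constant $E=-\ee^{-\varepsilon/2}\cos\theta_0$ is nonzero.
\item Proposition~\ref{prop:radial-behavior} shows that every orbit starting on $\Sigma_0$ returns to $\partial M$ at the common time $L$.
\item Proposition~\ref{prop:global-diffeomorphism} shows that $\Phi:[0,L]\times\Sigma_0\to M$ is a diffeomorphism, and it gives $\partial M=\Sigma_0\sqcup\Sigma_1$ with $\Sigma_1=\Phi(\{L\}\times\Sigma_0)$.
\end{enumerate}

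Here $\Sigma_1$ is disjoint from $\Sigma_0$. The injectivity step in that proof rules out $\Sigma_1\cap\Sigma_0\neq\emptyset$, since $e_0=-\nu$ on $\Sigma_0$ while $e_0=\nu$ on the terminal face. Also $\Sigma_1$ is nonempty, because $\Sigma_0$ is. So $\partial M$ has two components, contradicting connectedness. Hence $\theta_0=\pi/2$.

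No step here is a real obstacle, since the work has already been done. The one point to check is that the free-boundary theorem of \cite{GaoMaYao2026} needs no extra hypothesis here. In particular, we must confirm it does not require vanishing of the relative Liouville class. The paper uses it in exactly this form in the proof of Theorem~\ref{thm:boundary-components}.

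If one wants a self-contained check that such a hypothesis is automatic, argue as follows. When $\partial M$ is connected, the rank-one identity of Proposition~\ref{prop:global-rank-one} together with $E=0$ forces $b\equiv0$ on the collar, hence $\lambda=0$ there. Real analyticity, as in the proof of Proposition~\ref{prop:global-rank-one}, then gives $\lambda\equiv0$ on $M$. So $\mathcal L_{\mathrm{rel}}(X)=0$, and the cited rigidity applies in its weakest form.
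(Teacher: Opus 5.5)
Your argument is correct, but it takes a different route from the paper. The paper gives no proof of this proposition and simply imports it from \cite[Corollary~3.8]{GaoMaYao2026}, the connected-capillary rigidity result there. You instead obtain ``connected boundary forces $\theta_0=\pi/2$'' from the paper's own non-free flow-out (Propositions~\ref{prop:global-nondegeneracy}, \ref{prop:radial-behavior} and~\ref{prop:global-diffeomorphism}). This is exactly the contrapositive already contained in the proof of Theorem~\ref{thm:boundary-components}, and after it you need only the free-boundary rigidity \cite[Theorem~1.3]{GaoMaYao2026}. There is no circularity, since the first two subsections of Section~\ref{sec:global-classification} never use this proposition.

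The citation is the shorter path and does not depend on the flow-out machinery. Your route buys three things:
\begin{itemize}
\item It shows that, within this paper, the proposition is really a corollary of the boundary-count argument.
\item It reduces the external input to the single free-boundary theorem the paper already relies on.
\item It applies that theorem only when the whole boundary is free. This is a more conservative use of its hypotheses than the paper's own application of it to one component of a possibly disconnected boundary.
\end{itemize}

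Your closing check is also sound, and simpler than you present it. For $\theta_0=\pi/2$, the initial data of Lemma~\ref{lem:anciaux-collar} give $\Theta\equiv\pi$ and $\gamma(s)=1-s$. So the collar is the cone piece $(1-s)\psi(p)$, and \eqref{eq:lambda-collar} gives $\lambda=0$ there directly, without the rank-one identity or $E$. Analytic continuation as in Proposition~\ref{prop:global-rank-one} then gives $\lambda\equiv0$ on $M$. Hence any vanishing-Liouville-class hypothesis in the cited rigidity is automatically satisfied.
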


It remains to reconstruct the immersion in the two-boundary case and identify the contact angles. Define
\begin{equation*}
        \phi(s)=\int_0^s\frac{b(\tau)}{r(\tau)^2}\,d\tau,
        \qquad
        \gamma(s)=r(s)\ee^{i\phi(s)}.
\end{equation*}
Put
\begin{equation*}
        \psi(p)=X(p),
        \qquad p\in\Sigma_0.
\end{equation*}
Since $|X|=1$ on $\Sigma_0$, $\psi$ maps into $\Sph^{2n-1}$, and Theorem~\ref{thm:boundary-minimal} shows that it is minimal Legendrian.

\begin{proposition}\label{prop:global-product}
For all $(s,p)\in[0,L]\times\Sigma_0$,
\begin{equation}\label{eq:global-product-final}
        X\circ\Phi(s,p)=\gamma(s)\psi(p).
\end{equation}
The curve $\gamma$ is arclength-parametrized and satisfies the self-similar system \eqref{eq:intro-anciaux-system}.  Moreover,
\begin{equation}\label{eq:global-warped-metric}
        \Phi^*g=ds^2+r(s)^2g_\psi,
\end{equation}
where $g_\psi$ is the metric induced by the initial link.
\end{proposition}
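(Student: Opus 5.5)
The plan is to run the local reconstruction of Proposition~\ref{prop:local-reconstruction} globally along the flow-out $\Phi$. For fixed $p\in\Sigma_0$, the curve $s\mapsto\Phi(s,p)$ is an integral curve of $e_0$ on $[0,L]$. By Step~1 of Proposition~\ref{prop:radial-behavior}, along this curve the functions $r,a,b$ coincide with the common functions $r(s),a(s),b(s)$. I would therefore consider the curve
\[
Z_p(s)=\ee^{-i\phi(s)}\frac{X(\Phi(s,p))}{r(s)}.
\]
Differentiating in $s$ is the same as applying $D_{e_0}$. The computation in Proposition~\ref{prop:local-reconstruction} only used $D_{e_0}X=e_0$, the decomposition $X=ae_0-bJe_0$, $r'=a/r$, and $\phi'=b/r^2$, and all of these hold pointwise on all of $M$ by Proposition~\ref{prop:global-nondegeneracy}. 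Hence $Z_p'(s)=0$ on $[0,L]$, and no local leaf choice is needed. Since $\phi(0)=0$ and $r(0)=1$, we get $Z_p(s)=Z_p(0)=X(p)=\psi(p)$. This gives \eqref{eq:global-product-final} directly.

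Next I would check that $\gamma$ is arclength-parametrized: $\gamma'=\ee^{i\phi}(a+ib)/r$ and $a^2+b^2=r^2$, so $|\gamma'|=1$. Setting $\Theta$ by $a=r\cos\Theta$, $b=r\sin\Theta$ along the common curve, Proposition~\ref{prop:intrinsic-anciaux} gives \eqref{eq:intro-anciaux-system}. Here $\Theta$ is exactly $\arg\gamma'-\phi$, consistent with Subsection~\ref{subsec:product-models}. A continuous branch exists on the whole interval $[0,L]$ because $b\neq0$ there.

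For the metric \eqref{eq:global-warped-metric}, I would compute directly from the product formula $X\circ\Phi=\gamma\psi$, exactly as in Proposition~\ref{prop:anciaux-product}:
\[
(X\circ\Phi)_s=\gamma'\psi,\qquad (X\circ\Phi)_*W=\gamma\,\psi_*W .
\]
Using $\langle\psi,\psi_*W\rangle=0$ and the Legendrian relation $\langle J\psi,\psi_*W\rangle=0$, the cross terms vanish and $|\gamma'\psi|=1$, which yields $ds^2+r^2g_\psi$. Alternatively, this follows from $\Phi_*\partial_s=e_0\perp\Dcal$ together with the product formula. Since $\Phi$ is a diffeomorphism by Proposition~\ref{prop:global-diffeomorphism}, the pullback metric is well defined.

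The main obstacle is justifying that the derivative identity $D_{e_0}(\ee^{-i\phi}X/r)=0$ holds uniformly along every orbit, with the same $\phi(s)$ for every $p$. This is where the $\Dcal$-invariance of $b$ and $r$ from Proposition~\ref{prop:eta-closed} and the autonomous ODE uniqueness of Step~1 enter. Once they are invoked, the rest is a routine ODE computation; one should also note that the boundary points $s=0,L$ are handled by continuity.
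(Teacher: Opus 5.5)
Your proposal is correct and follows the paper's proof essentially step for step: you show that the normalized curve $\ee^{-i\phi(s)}X(\Phi(s,p))/r(s)$ is constant, derive the arclength property and profile system from $a^2+b^2=r^2$ and Proposition~\ref{prop:intrinsic-anciaux}, and obtain the warped metric from the spherical and Legendrian orthogonality relations. The only cosmetic difference is that you note the derivative identity is pointwise and therefore holds along the whole orbit, whereas the paper covers $[0,L]$ by finitely many flow boxes and patches the local constants.
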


\begin{proof}
Fix $p\in\Sigma_0$ and consider the vector-valued function
$$
        Q_p(s)=\ee^{-i\phi(s)}\frac{X(\Phi(s,p))}{r(s)}.
$$
Proposition~\ref{prop:local-reconstruction} was derived intrinsically and is
valid on every flow box.  It gives $$D_sQ_p=0$$ on each such box.  The interval
$[0,L]$ is compact, so finitely many overlapping flow boxes cover the orbit;
the local constants agree on the overlaps.  Thus $Q_p$ is constant on the
whole interval.  

Since $r(0)=1$ and $\phi(0)=0$,
$$
        Q_p(s)=Q_p(0)=X(p)=\psi(p).
$$
This proves
$$
        X(\Phi(s,p))=r(s)\ee^{i\phi(s)}\psi(p)=\gamma(s)\psi(p).
$$

The arclength assertion follows from
$$
\begin{aligned}
        \gamma'
        =\ee^{i\phi}(r'+ir\phi')
 =\ee^{i\phi}\left(\frac ar+i\frac br\right),
\end{aligned}
$$
and $a^2+b^2=r^2$.  Proposition~\ref{prop:intrinsic-anciaux} supplies the ODE
and the first integral.

It remains to record the metric.  Differentiating the global product gives
$$
        (X\circ\Phi)_*\partial_s=\gamma'\psi,
        \qquad
        (X\circ\Phi)_*W=\gamma\psi_*W
        \quad(W\in T\Sigma_0).
$$
The spherical relation $\langle\psi,\psi_*W\rangle=0$ and the Legendrian
relation $\langle J\psi,\psi_*W\rangle=0$ imply that the two displayed tangent
directions are orthogonal.  Their squared lengths are respectively $1$ and
$r^2|\psi_*W|^2$.  This proves \eqref{eq:global-warped-metric}.
\end{proof}

Since $\Sigma_0$ is connected, \eqref{eq:global-product-final} and the strict
unit-ball condition verify all the hypotheses of
Proposition~\ref{prop:product-boundary-angles}, with $s_0=0$ and $s_1=L$.
Hence the two boundary contact angles satisfy
$$
        \theta_0+\theta_1=\pi.
$$

\begin{proof}[Proof of Theorem~\ref{thm:classification}]
The boundary-component theorem has already been proved, so $\partial M$ has one or two connected components.

If $\partial M$ is connected, Proposition~\ref{prop:one-boundary-classification} gives alternative \rm(i).

Assume that $\partial M$ has two connected components.  No component can have
contact angle $\pi/2$, because
\cite[Theorem~1.3]{GaoMaYao2026}
would then imply that $X$ is a diffeomorphism onto an equatorial Lagrangian
disk, whose boundary is connected.  The non-free conclusions of Section~\ref{sec:global-classification}
therefore apply.  Proposition~\ref{prop:global-diffeomorphism} gives a diffeomorphism
$$
        \Phi:[0,L]\times\Sigma_0\longrightarrow M,
$$
with the two boundary faces mapped onto the two connected components of $\partial M$.  Set $N=\Sigma_0$.  Theorem~\ref{thm:boundary-minimal} shows that
$$
        \psi=X|_N:N\longrightarrow\Sph^{2n-1}
$$
is a compact minimal Legendrian immersion.  Proposition~\ref{prop:global-product} then yields
$$
        X\circ\Phi(s,p)=\gamma(s)\psi(p),
$$
where $\gamma$ is arclength-parametrized and satisfies the self-similar system \eqref{eq:intro-anciaux-system}.  Proposition~\ref{prop:radial-behavior} gives
$$
        r(0)=r(L)=1,
        \qquad
        0<r(s)<1\quad(0<s<L).
$$
Finally, Proposition~\ref{prop:product-boundary-angles}, applied with $s_0=0$ and $s_1=L$, gives
$$
        \cos\theta_0+\cos\theta_1=0,
        \qquad
        \theta_0+\theta_1=\pi.
$$
Thus alternative \rm(ii) holds.  The two alternatives are mutually exclusive because their boundary component counts are different.  This completes the proof.
\end{proof}

\subsection{Consequences of the classification}

\begin{corollary}\label{cor:liouville-dichotomy}
Under the hypotheses of Theorem~\ref{thm:classification}, the Liouville
form $\lambda=X^*\lambda_{\C^n}$ is exact.  Its relative class $\mathcal L_{\mathrm{rel}}(X)$ vanishes exactly in the equatorial Lagrangian disk case, equivalently, when $\partial M$ is connected. 
In the two-boundary case,
\(
        H^1_{\mathrm{dR}}(M,\partial M;\mathbb R)\cong\mathbb R
\)
and $\mathcal L_{\mathrm{rel}}(X)$ spans this space.
\end{corollary}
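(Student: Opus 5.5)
Exactness of $\lambda$ is shown separately in the two alternatives of Theorem~\ref{thm:classification}; the relative class is then computed from the explicit primitives. In case (i), $M$ is a closed ball, so $H^1_{\mathrm{dR}}(M)=0$, and the closed form $\lambda$ is exact. For the relative class we write $\lambda=du$. Since $\lambda|_{T\partial M}=0$ by \eqref{eq:lambda-boundary-zero} and $\partial M\cong S^{n-1}$ is connected (here $n\ge2$), $u$ is constant on $\partial M$. Subtracting that constant makes $u$ vanish on the boundary, so $\mathcal L_{\mathrm{rel}}(X)=0$. One can also see this directly: an equatorial Lagrangian disk lies in a Lagrangian plane through the origin, and there $JX$ is normal, so $\lambda\equiv0$.

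In case (ii) we pull back by the diffeomorphism $\Phi$ of Theorem~\ref{thm:classification}. From $X\circ\Phi=\gamma\psi$ and the computation already made on the collar (which used only the product form, so it now holds globally by Proposition~\ref{prop:global-product}), we get
\[
        \Phi^*\lambda=r\sin\Theta\,ds=b(s)\,ds .
\]
Indeed $\lambda(\partial_s)=\langle i\gamma,\gamma'\rangle=r^2\phi'$, and $\lambda$ vanishes on $TN$ by the Legendrian condition for $\psi$. Setting $u(s)=\int_0^s b(\tau)\,d\tau$ gives $\Phi^*\lambda=du$, so $\lambda$ is exact.

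For the relative class we compute $H^1_{\mathrm{dR}}(M,\partial M)$ from the long exact sequence of the pair $([0,L]\times N,\{0,L\}\times N)$. Since $N$ is connected, $H^0(M)\cong\R$ maps diagonally into $H^0(\partial M)\cong\R^2$. Moreover $H^1(M)\to H^1(\partial M)$ is injective, because the inclusion of either boundary face is a homotopy equivalence. Hence
\[
        H^1(M,\partial M)\cong\operatorname{coker}\bigl(\R\to\R^2\bigr)\cong\R .
\]
A generator is represented by $\chi'(s)\,ds$, where $\chi$ is a bump function with $\int_0^L\chi'=1$ and $\chi'$ supported away from the ends. More intrinsically, the class is detected by the pairing $[\eta]_{\mathrm{rel}}\mapsto\int_{c}\eta$, where $c(s)=\Phi(s,p_0)$ is any relative $1$-cycle joining the two boundary components. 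This pairing is well defined on relative classes because exact relative forms $df$ with $f|_{\partial M}=0$ integrate to zero along $c$. It is an isomorphism $H^1(M,\partial M)\to\R$, since it sends the generator above to $1$.

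It remains to show $\int_c\lambda\ne0$. We have $\int_c\lambda=\int_0^L b(s)\,ds$. By the first integral \eqref{eq:intrinsic-first-integral}, $b=E\,\ee^{\varepsilon r^2/2}r^{1-n}$ has the constant sign of $E\neq0$ (Proposition~\ref{prop:global-nondegeneracy}). Hence the integral is nonzero, and $\mathcal L_{\mathrm{rel}}(X)$ spans $H^1(M,\partial M)$. This also gives the dichotomy in the statement: the relative class vanishes exactly in case (i), which by Theorem~\ref{thm:classification} is exactly the case of connected boundary.

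The main point needing care is well-definedness of the detecting functional on relative classes and the computation $H^1(M,\partial M)\cong\R$. Both follow from the product structure, so I expect no real obstacle beyond checking that $\Phi^*\lambda=b\,ds$ holds globally.
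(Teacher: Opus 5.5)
Your proposal is correct and follows essentially the paper's argument. In the disk case $\lambda\equiv0$ (or $H^1_{\mathrm{dR}}(M)=0$). In the two-boundary case $\Phi^*\lambda=r\sin\Theta\,ds=b\,ds$ has a primitive depending only on $s$, and its total increment $\int_0^L b\,ds$ is nonzero because $E\neq0$ fixes the sign of $b$.

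The differences are cosmetic. You obtain $H^1_{\mathrm{dR}}(M,\partial M)\cong\R$ from the long exact sequence of the pair rather than the relative K\"unneth formula. You also detect the class by pairing with the relative cycle $s\mapsto\Phi(s,p_0)$, whereas the paper notes that no primitive can vanish on both boundary components; both reduce to the same computation $G(L)-G(0)\neq0$.
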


\begin{proof}
In the disk case, the image of $X$ lies in a Lagrangian linear subspace
through the origin, and hence $\lambda\equiv0$.

Assume the two-boundary alternative.  The global product formula gives
\[
        \Phi^*\lambda=r(s)\sin\Theta(s)\,ds.
\]
Define
\[
        G(s)=\int_0^s r(\tau)\sin\Theta(\tau)\,d\tau
\]
and define $u$ on $M$ by $u\circ\Phi(s,p)=G(s)$.  Then $du=\lambda$, so
$\lambda$ is exact.

Since the first integral $E$ is nonzero,
\(
        \sin\Theta
        =E r^{-n}\ee^{\varepsilon r^2/2}
\)
has a fixed nonzero sign.  Therefore $G(L)\ne G(0)$.  Every primitive of
$\lambda$ differs from $u$ by a constant because $M$ is connected, so no
primitive takes the same value on the two boundary components.  Thus
$\mathcal L_{\mathrm{rel}}(X)\ne0$.

Finally,
\[
 (M,\partial M)\cong
 ([0,L]\times N,\{0,L\}\times N),
\]
and $N$ is connected.  The relative K\"unneth formula therefore gives
\(
 H^1_{\mathrm{dR}}(M,\partial M;\mathbb R)\cong\mathbb R.
\)
\end{proof}

\begin{corollary}
\label{cor:boundary-congruence}
In the two-boundary case, define
\[
        j_0(p)=\Phi(0,p),
        \qquad
        j_1(p)=\Phi(L,p).
\]
Then there exists $\vartheta\in\mathbb R$ such that
\(
        X\circ j_1
        =\ee^{i\vartheta}X\circ j_0
\).
Consequently, after the identification induced by the global flow, the two
boundary immersions are unitarily congruent.  In particular, they induce the
same metric and have the same $(n-1)$-dimensional volume.
\end{corollary}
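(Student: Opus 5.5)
The plan is to read off the statement directly from the global product formula of Proposition~\ref{prop:global-product}. By that proposition,
\[
X\circ\Phi(s,p)=\gamma(s)\psi(p)\quad\text{with }\psi=X|_{\Sigma_0}.
\]
Hence
\[
X\circ j_0(p)=X\circ\Phi(0,p)=\gamma(0)\psi(p)=\psi(p),\qquad X\circ j_1(p)=\gamma(L)\psi(p).
\]
Proposition~\ref{prop:radial-behavior} gives $r(L)=1$, so $\gamma(L)=r(L)\ee^{i\phi(L)}=\ee^{i\phi(L)}$. Setting
\[
\vartheta:=\phi(L)=\int_0^L b(\tau)\,r(\tau)^{-2}\,d\tau
\]
then yields
\[
X\circ j_1=\ee^{i\vartheta}\,X\circ j_0.
\]
The phase $\vartheta$ is a single real number because $\phi$ depends on $s$ alone. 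This is guaranteed by Step~1 of Proposition~\ref{prop:radial-behavior}, which shows that $a,b,r$ are the same functions along every orbit. Consequently the same constant works for all $p$.

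For the consequences, I would note that multiplication by $\ee^{i\vartheta}$ is a unitary map $U(n)\subset O(2n)$. It preserves $J$, $\omega$, the Euclidean metric, and the unit sphere. The two boundary immersions, both parametrized by $\Sigma_0$ via $j_0$ and $j_1$, therefore differ by a unitary map, so they are unitarily congruent. Pulling back the Euclidean metric gives $(X\circ j_1)^*g_{\C^n}=(X\circ j_0)^*g_{\C^n}$. Because $j_0$ and $j_1$ are diffeomorphisms onto the two components (Proposition~\ref{prop:global-diffeomorphism}), the $(n-1)$-volumes of the two boundary components agree. One can also see this from the warped metric $\Phi^*g=ds^2+r(s)^2g_\psi$ with $r(0)=r(L)=1$.

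No step is a genuine obstacle. The only point needing care is the global definition of $\phi$ along the whole interval $[0,L]$, which must rule out any branch ambiguity. This is handled because $\phi$ is defined as an integral of the smooth function $b/r^2$ with $r\ge r_*>0$, not as an angle. One could additionally remark that $\vartheta$ has the sign of $E$, since $b$ has the fixed sign of $E$, so $\vartheta\neq0$. This is not needed for the statement.
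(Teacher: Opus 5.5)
Your proposal is correct and follows the paper's proof. Both evaluate the global product formula $X\circ\Phi(s,p)=\gamma(s)\psi(p)$ at $s=0$ and $s=L$, use $r(0)=r(L)=1$, and take $\vartheta=\phi(L)-\phi(0)=\phi(L)$; your added justifications for the metric and volume statements (unitary invariance, or the warped metric with $r(L)=1$) are also sound.
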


\begin{proof}
Since $r(0)=r(L)=1$, the global product formula gives
\[
        X(\Phi(0,p))=\ee^{i\phi(0)}\psi(p),
        \qquad
        X(\Phi(L,p))=\ee^{i\phi(L)}\psi(p).
\]
Thus the conclusion holds with
$\vartheta=\phi(L)-\phi(0)$.
\end{proof}

\section{Low-dimensional cases and Calabi-suspended examples}\label{sec:calabi-realizations}
  We begin with the two-dimensional classification and the angle estimate before turning to the construction of the Calabi-suspended families.

\subsection{The two-dimensional case}
\begin{proof}[Proof of Corollary~\ref{cor:two-dimensional}]
When $n=2$, the link $N$ is a connected closed one-dimensional manifold, hence is diffeomorphic to $S^1$.  By Theorem~\ref{thm:boundary-minimal},
$$
        \psi:S^1\longrightarrow\Sph^3
$$
is a one-dimensional minimal immersion.  A one-dimensional minimal submanifold of a Riemannian manifold is a geodesic, so the image is a great circle.  The immersion of the compact circle onto that great circle is a finite covering.  Let $m\ge1$ be its degree.  After a unitary transformation, and after parametrizing the domain by $t\in\R/(2\pi\mathbb Z)$,
$$
        \psi(t)=(\cos mt,\sin mt).
$$
Substituting this expression into \eqref{eq:global-product-final} gives
\eqref{eq:intro-two-dimensional}.  When $\varepsilon=0$, equations
\eqref{eq:alpha-prime-product} and \eqref{eq:polar-basic}, with $n=2$, give
$$
        \alpha'=-\frac{\sin\Theta}{r},
        \qquad
        \phi'=\frac{\sin\Theta}{r}.
$$
Thus $\alpha+\phi$ is constant.  Multiplying the immersion by a constant
unitary scalar changes this constant by twice the scalar phase, so we may
normalize $\alpha+\phi=0$.  Since $\Theta=\alpha-\phi$, it follows that
$\Theta=-2\phi$.  The first integral then gives
$$
        E=r^2\sin\Theta=-r^2\sin(2\phi)
        =-\operatorname{Im}(\gamma^2).
$$
Writing $\gamma=x+iy$, this is the hyperbola $2xy=-E$.  The product of this
profile with the unit great circle is the Lagrangian catenoid of
Harvey--Lawson and Castro--Urbano
\cite{HarveyLawson1982,CastroUrbano1999}: restricting between the two unit
sphere intersections gives the capillary annulus in Luo--Sun
\cite[Example~3.1]{LuoSun2021}.  When $\varepsilon\ne0$, the curve equation is
precisely the rotational self-similar system in Anciaux's construction
\cite{Anciaux2006}.  Passing from the great circle to its degree-$m$
parametrization produces the stated finite covers.
\end{proof}
\subsection{Angle estimates}
\begin{corollary}\label{cor:angle-formula}
Assume alternative \rm(ii) of Theorem~\ref{thm:classification}, and let
$r_*:=\min_{[0,L]}r$.  Then
\begin{equation}\label{eq:intro-angle-formula}
        |\cos\theta_0|=|\cos\theta_1|
        =r_*^n\exp\!\left(\frac{\varepsilon(1-r_*^2)}2\right).
\end{equation}
If $\dist(X(M),0)\ge\rho_0>0$, then
\begin{equation}\label{eq:intro-angle-gap}
        |\cos\theta_i|
        \ge
        \rho_0^n\exp\!\left(\frac{\varepsilon(1-\rho_0^2)}2\right),
        \qquad i=0,1.
\end{equation}
\end{corollary}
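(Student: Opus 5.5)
The plan is to evaluate the first integral $E=h_{\varepsilon,n}(r)\sin\Theta$ at two points along the profile curve: at the boundary $s=0$ and at the unique radial minimum. In alternative (ii), Proposition~\ref{prop:global-product} and the discussion at the start of Section~\ref{sec:global-classification} give
\[
E=-\ee^{-\varepsilon/2}\cos\theta_0,
\]
by \eqref{eq:E-contact-angle-initial}. Proposition~\ref{prop:product-boundary-angles} gives $\cos\theta_1=-\cos\theta_0$, so $|\cos\theta_0|=|\cos\theta_1|=\ee^{\varepsilon/2}|E|$.

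At the minimum point $s_*$ of $r$, Proposition~\ref{prop:radial-behavior} gives $r'(s_*)=\cos\Theta(s_*)=0$. Hence $|\sin\Theta(s_*)|=1$, and the first integral yields $|E|=h_{\varepsilon,n}(r_*)=r_*^n\ee^{-\varepsilon r_*^2/2}$. This is the defining relation for $r_*$ in Section~\ref{sec:global-classification}. We also need that this $r_*$ coincides with $\min_{[0,L]}r$; Proposition~\ref{prop:radial-behavior} provides this, since the minimum is attained once, at the unique turning point. Combining the two expressions gives
\[
|\cos\theta_i|=\ee^{\varepsilon/2}r_*^n\ee^{-\varepsilon r_*^2/2},
\]
which is \eqref{eq:intro-angle-formula}.

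For the gap estimate \eqref{eq:intro-angle-gap}, the product formula $X\circ\Phi(s,p)=\gamma(s)\psi(p)$ with $|\psi|=1$ gives $|X|=r(s)$. Therefore $\min_M|X|=r_*$, and the hypothesis $\dist(X(M),0)\ge\rho_0$ means $r_*\ge\rho_0$. The right-hand side of \eqref{eq:intro-angle-formula} equals $\ee^{\varepsilon/2}h_{\varepsilon,n}(r_*)$. Since $h_{\varepsilon,n}$ is strictly increasing on $(0,1]$ by \eqref{eq:h-n-increasing}, we get $h_{\varepsilon,n}(r_*)\ge h_{\varepsilon,n}(\rho_0)$ whenever $\rho_0\le r_*<1$, and this is exactly the claimed bound.

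No step is a serious obstacle. The only points needing care are, first, identifying $\min r$ with the turning-point value (already settled by Proposition~\ref{prop:radial-behavior}), and second, noting that $\rho_0\le r_*<1$ automatically places $\rho_0$ in the interval $(0,1]$ where the monotonicity \eqref{eq:h-n-increasing} applies.
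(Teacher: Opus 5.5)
Your proposal is correct and follows essentially the paper's argument. You evaluate the first integral at the turning point to get $|E|=r_*^n\ee^{-\varepsilon r_*^2/2}$, use $|\cos\theta_i|=\ee^{\varepsilon/2}|E|$ at the endpoints, and combine monotonicity of $r^n\ee^{-\varepsilon r^2/2}$ on $(0,1]$ with $r_*=\min_M|X|\ge\rho_0$. The only cosmetic difference is that the paper reads off $|\cos\theta_i|=\ee^{\varepsilon/2}|E|$ directly from \eqref{eq:product-angle-components} at both endpoints, whereas you use \eqref{eq:E-contact-angle-initial} together with supplementarity.
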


\begin{proof}[Proof of Corollary~\ref{cor:angle-formula}]
At the unique minimum point of $r$,
$$
        r'=\cos\Theta=0,
        \qquad
        |\sin\Theta|=1.
$$
The first integral therefore gives
\begin{equation}\label{eq:E-rstar-final}
        |E|=r_*^n\ee^{-\varepsilon r_*^2/2}.
\end{equation}
At the endpoints, \eqref{eq:product-angle-components} and \eqref{eq:anciaux-first-integral} give
$$
        |\cos\theta_i|=\ee^{\varepsilon/2}|E|,
        \qquad i=0,1.
$$
Combining this with \eqref{eq:E-rstar-final} proves \eqref{eq:intro-angle-formula}.

Define
$$
        g_{\varepsilon,n}(r)=r^n\exp\!\left(\frac{\varepsilon(1-r^2)}{2}\right).
$$
For $0<r\le1$,
$$
        g_{\varepsilon,n}'(r)
        =r^{n-1}\exp\!\left(\frac{\varepsilon(1-r^2)}{2}\right)(n-\varepsilon r^2)>0.
$$
If $\dist(X(M),0)\ge\rho_0$, then $r_*\ge\rho_0$.  Monotonicity of $g_{\varepsilon,n}$ gives \eqref{eq:intro-angle-gap}.
\end{proof}
\subsection{Calabi suspension and higher dimensional examples}
We now prove Theorem~\ref{thm:intro-calabi-realization}.  The construction
below is the link-level Calabi product with a point (compare Li--Wang
\cite{LiWang2011Calabi}).  We include the proof because the precise
normalization and the induced metric will be used in the iteration.

\begin{proposition}\label{prop:calabi-suspension}
Let $k\ge2$, and let
$$
        \psi:N^{k-1}\longrightarrow\Sph^{2k-1}\subset\C^k
$$
be a compact minimal Legendrian immersion.  Then the map
\begin{equation}
        \Psi(t,p)
        =\mathcal C_k(\psi)(t,p)
        =
        \left(
        \sqrt{\frac{k}{k+1}}\,\ee^{it}\psi(p),
        \frac1{\sqrt{k+1}}\ee^{-ikt}
        \right)
\end{equation}
defines a compact minimal Legendrian immersion
$$
        \Psi:S^1\times N\longrightarrow\Sph^{2k+1}\subset\C^{k+1}.
$$
Its induced metric is
\begin{equation}\label{eq:calabi-metric}
        g_\Psi=k\,dt^2+\frac{k}{k+1}g_\psi.
\end{equation}
\end{proposition}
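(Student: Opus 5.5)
We verify the four properties of $\Psi$ directly: it maps into the sphere, it is Legendrian, it has the stated induced metric (hence is an immersion), and it is minimal. For minimality we use the phase criterion of Lemma~\ref{lem:legendrian-minimal-phase}, which avoids computing second fundamental forms.

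\emph{Sphere, Legendrian condition, metric.} Since $|\psi|=1$, we get $|\Psi|^2=\frac{k}{k+1}+\frac1{k+1}=1$. The tangent vectors are
\[
\Psi_t=\Bigl(\sqrt{\tfrac{k}{k+1}}\,i\ee^{it}\psi,\ -\tfrac{ik}{\sqrt{k+1}}\ee^{-ikt}\Bigr),
\qquad
\Psi_*v=\Bigl(\sqrt{\tfrac{k}{k+1}}\,\ee^{it}\psi_*v,\ 0\Bigr).
\]
Then $\langle J\Psi,\Psi_t\rangle=\frac{k}{k+1}-\frac{k}{k+1}=0$. This cancellation is exactly why the exponent $-ikt$ and the weights were chosen. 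Also $\langle J\Psi,\Psi_*v\rangle=\frac{k}{k+1}\langle J\psi,\psi_*v\rangle=0$, since $\psi$ is Legendrian.

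For the metric, $|\Psi_t|^2=\frac{k}{k+1}+\frac{k^2}{k+1}=k$. Next, $\langle\Psi_t,\Psi_*v\rangle=\frac{k}{k+1}\langle J\psi,\psi_*v\rangle=0$. Finally, $\langle\Psi_*v,\Psi_*w\rangle=\frac{k}{k+1}\langle\psi_*v,\psi_*w\rangle$. This gives \eqref{eq:calabi-metric}. Because $g_\Psi$ is positive definite, $\Psi$ is an immersion, and compactness of $S^1\times N$ is clear.

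\emph{Minimality.} Orient $S^1\times N$ locally and compute the Legendrian angle. Take an oriented orthonormal frame $e_1,\dots,e_{k-1}$ for $g_\psi$. Then $\Psi_t/\sqrt k$ together with $\sqrt{(k+1)/k}\,\Psi_*e_a$ is orthonormal. We evaluate
\[
\Omega_{k+1}\Bigl(\Psi,\ \Psi_t/\sqrt k,\ \sqrt{\tfrac{k+1}{k}}\Psi_*e_1,\dots\Bigr).
\]
The vectors $\Psi_*e_a$ have vanishing last coordinate, so expanding the determinant along the last coordinate row leaves two terms. The $\Psi$ column contributes $\frac{1}{\sqrt{k+1}}\ee^{-ikt}$ times the $k\times k$ minor built from $\Psi_t$ and the $\Psi_*e_a$. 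The $\Psi_t$ column contributes $-\frac{ik}{\sqrt{k+1}}\ee^{-ikt}$ times the minor built from $\Psi$ and the $\Psi_*e_a$.

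Each minor is, up to a positive constant, $\ee^{ikt}\,\Omega_k(\psi,\psi_*e_1,\dots)=\ee^{ikt}\ee^{i\beta_\psi}$, up to a constant factor $i$ and a sign. The $\ee^{ikt}$ comes from factoring $\ee^{it}$ out of each of the $k$ columns, and the factor $i$ comes from $\Psi_t$'s first block being $i\ee^{it}\psi$.

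The $\ee^{-ikt}$ from the last coordinate cancels this $\ee^{ikt}$. Both terms are therefore constant multiples of $\ee^{i\beta_\psi}$, with signs and factors of $i$ that do not depend on $(t,p)$. We must check that the two terms do not cancel; the result is then $c\,\ee^{i\beta_\psi}$ with $c\neq0$ constant, and in fact $|c|=1$ since the Legendrian angle is unit modulus.

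Since $\beta_\psi$ is locally constant by Lemma~\ref{lem:legendrian-minimal-phase} applied to $\psi$, the Legendrian angle of $\Psi$ is locally constant. Applying Lemma~\ref{lem:legendrian-minimal-phase} again shows that $\Psi$ is minimal in $\Sph^{2k+1}$.

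\emph{Main obstacle.} The hard step is the determinant bookkeeping: tracking the sign from the cofactor expansion and the factors of $i$ so that the two terms add rather than cancel. An alternative would be to bypass orientation issues and directly verify minimality via the trace of $D_{e_i}e_i$ having no normal component tangent to the sphere. Either way, the key cancellation is that $\ee^{it}$ to the $k$-th power is compensated by the $\ee^{-ikt}$ in the last coordinate.
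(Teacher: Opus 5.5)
Your proposal is correct and follows the paper's proof essentially step for step: the same direct checks of the sphere, Legendrian and metric conditions, and minimality via the phase criterion of Lemma~\ref{lem:legendrian-minimal-phase}, using the same orthonormal frame and the same cofactor expansion of $\Omega_{k+1}$ along the last complex coordinate. The step you flag as the main obstacle is not really one, since your own remark that an orthonormal frame of the Lagrangian cone has $|\Omega_{k+1}|=1$ already rules out cancellation; explicitly, the two cofactor terms are $(-1)^k\frac{i}{k+1}\ee^{i\beta_\psi}$ and $(-1)^k\frac{ik}{k+1}\ee^{i\beta_\psi}$, which add to $(-1)^k i\,\ee^{i\beta_\psi}$, exactly as in the paper.
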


\begin{proof}
Set
$$
        a=\sqrt{\frac{k}{k+1}},
        \qquad
        b=\frac1{\sqrt{k+1}}.
$$
Then $a^2+b^2=1$, so $|\Psi|=1$.  For $v\in T_pN$,
\begin{equation}\label{eq:calabi-differentials}
        \Psi_*v=(a\ee^{it}\psi_*v,0),
        \qquad
        \Psi_t=(ia\ee^{it}\psi,-ikb\ee^{-ikt}).
\end{equation}
Since $\psi$ is Legendrian,
$$
        \langle J\psi,\psi_*v\rangle=0,
        \qquad
        \omega(\psi_*v,\psi_*w)=0.
$$
Consequently,
$$
        \langle J\Psi,\Psi_*v\rangle=0,
        \qquad
        \omega(\Psi_*v,\Psi_*w)=0.
$$
Moreover,
$$
\begin{aligned}
        \langle J\Psi,\Psi_t\rangle
        &=a^2-kb^2\\
        &=\frac{k}{k+1}-\frac{k}{k+1}
        \\&=0.
\end{aligned}
$$
Thus the standard contact form of $\Sph^{2k+1}$ vanishes on all tangent vectors to $\Psi$, and $\Psi$ is Legendrian.

The formulas in \eqref{eq:calabi-differentials} also give
$$
        |\Psi_t|^2=a^2+k^2b^2=k,
        \qquad
        \langle\Psi_t,\Psi_*v\rangle=0,
$$
and
$$
        \langle\Psi_*v,\Psi_*w\rangle
        =a^2\langle\psi_*v,\psi_*w\rangle
        =\frac{k}{k+1}\langle\psi_*v,\psi_*w\rangle.
$$
This proves \eqref{eq:calabi-metric}, and in particular proves that $\Psi$ is an immersion.

It remains to verify minimality.  The assertion is local, so choose a local oriented orthonormal frame
$$
        v_1,\ldots,v_{k-1}
$$
for the metric induced by $\psi$.  Since $\psi$ is minimal Legendrian, the cone over $\psi$ has locally constant Lagrangian phase.  Thus, for a locally constant function $\beta$,
\begin{equation}\label{eq:old-link-phase}
        \Omega_k(\psi,\psi_*v_1,\ldots,\psi_*v_{k-1})
        =\ee^{i\beta},
\end{equation}
where $\Omega_k=dz_1\wedge\cdots\wedge dz_k.$

At radius one, an orthonormal frame for the cone over $\Psi$ is
$$
        \Psi,
        \qquad
        E_t=\frac1{\sqrt{k}}\Psi_t,
        \qquad
        E_j=(\ee^{it}\psi_*v_j,0),
        \quad 1\le j\le k-1.
$$
Let $\Omega_{k+1}=dz_1\wedge\cdots\wedge dz_{k+1}.$ We expand the determinant along the last complex coordinate.  The contribution of the last coordinate of $\Psi$ is
$$
        (-1)^k\frac{iab}{\sqrt{k}}
        \Omega_k(\psi,\psi_*v_1,\ldots,\psi_*v_{k-1}),
$$
while the contribution of the last coordinate of $E_t$ is
$$
        (-1)^k i\sqrt{k}\,ab
        \Omega_k(\psi,\psi_*v_1,\ldots,\psi_*v_{k-1}).
$$
Here the factors $\ee^{-ikt}$ from the last coordinate and $\ee^{ikt}$ from the first $k$ coordinates cancel in both terms.  Consequently,
\begin{equation*}
\begin{aligned}
\Omega_{k+1}(\Psi,E_t,E_1,\ldots,E_{k-1})
=(-1)^k i\frac{(k+1)ab}{\sqrt{k}}
\Omega_k(\psi,\psi_*v_1,\ldots,\psi_*v_{k-1})
=(-1)^k i\ee^{i\beta},
\end{aligned}
\end{equation*}
where we used $(k+1)ab/\sqrt{k}=1$ and \eqref{eq:old-link-phase}.  Thus the cone over $\Psi$ has locally constant Lagrangian phase.  Hence the cone is minimal Lagrangian, and $\Psi$ is minimal Legendrian.  Compactness follows from compactness of $S^1\times N$.
\end{proof}

\begin{corollary}\label{cor:iterated-calabi-suspension}
Let $k_0\ge2$, and let
$$
        \psi_{k_0}:N^{k_0-1}\longrightarrow\Sph^{2k_0-1}
$$
be a compact minimal Legendrian immersion.  Define recursively
$$
        N^{j}=S^1\times N^{j-1},
        \qquad
        \psi_{j+1}=\mathcal C_j(\psi_j),
        \qquad j\ge k_0.
$$
Then, for every $n\ge k_0$,
$$
        N^{n-1}\cong\mathbb T^{n-k_0}\times N^{k_0-1}
$$
and
$$
        \psi_n:N^{n-1}\longrightarrow\Sph^{2n-1}
$$
is a compact minimal Legendrian immersion.
\end{corollary}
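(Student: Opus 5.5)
The argument is a straightforward induction on $n\ge k_0$, applying Proposition~\ref{prop:calabi-suspension} once per step.

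\emph{Base case.} For $n=k_0$ we have $\mathbb T^0\times N^{k_0-1}=N^{k_0-1}$. The map $\psi_{k_0}$ is a compact minimal Legendrian immersion by hypothesis, so there is nothing to prove.

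\emph{Inductive step.} Assume that for some $j\ge k_0$ we have a diffeomorphism $N^{j-1}\cong\mathbb T^{j-k_0}\times N^{k_0-1}$. Assume also that $\psi_j:N^{j-1}\to\Sph^{2j-1}\subset\C^j$ is a compact minimal Legendrian immersion. Since $j\ge k_0\ge2$, the hypothesis $k\ge2$ of Proposition~\ref{prop:calabi-suspension} holds with $k=j$. That proposition then shows that
\[
\psi_{j+1}=\mathcal C_j(\psi_j):S^1\times N^{j-1}\longrightarrow\Sph^{2j+1}\subset\C^{j+1}
\]
is a compact minimal Legendrian immersion. By the recursive definition, $N^j=S^1\times N^{j-1}$. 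Using the inductive hypothesis and associativity of products,
\[
N^j\cong S^1\times\mathbb T^{j-k_0}\times N^{k_0-1}\cong\mathbb T^{j+1-k_0}\times N^{k_0-1}.
\]
Finally, $\Sph^{2j+1}=\Sph^{2(j+1)-1}$, so the target sphere has the required dimension at level $j+1$. This closes the induction.

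\emph{Indexing to check.} The suspension at stage $j$ must use $k=j$, the dimension of the current ambient space $\C^j$. This is exactly how the recursion $\psi_{j+1}=\mathcal C_j(\psi_j)$ is written, so the normalizing constants $\sqrt{j/(j+1)}$ and $1/\sqrt{j+1}$ are the ones Proposition~\ref{prop:calabi-suspension} requires. The dimension count is also consistent: $N^{j-1}$ has dimension $j-1$, so $S^1\times N^{j-1}$ has dimension $j$, which matches a Legendrian in $\Sph^{2j+1}$. Compactness carries through each step because a product of compact manifolds is compact. Connectedness, if needed later for Theorem~\ref{thm:intro-calabi-realization}, also carries through, since $S^1$ and $N^{k_0-1}$ are connected.

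There is no real obstacle; the only care needed is the index bookkeeping above. If desired, iterating \eqref{eq:calabi-metric} also gives the induced metric explicitly as a scaled product of flat circle factors with $g_{\psi_{k_0}}$, but the statement does not require it.
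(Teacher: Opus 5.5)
Your proof is correct and follows the paper's own argument: both iterate Proposition~\ref{prop:calabi-suspension} $n-k_0$ times, each step raising the link and ambient dimensions by one and adding an $S^1$ factor to the domain. Your explicit induction and index check (taking $k=j\ge2$ at stage $j$) simply write out what the paper states in two lines.
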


\begin{proof}
Proposition~\ref{prop:calabi-suspension} raises both the link dimension and the complex ambient dimension by one.  Iterating it $n-k_0$ times proves the assertion.  Each step adds one circle factor to the domain, hence
$
        N^{n-1}\cong(S^1)^{n-k_0}\times N^{k_0-1}
        =\mathbb T^{n-k_0}\times N^{k_0-1}.
$
\end{proof}

\begin{lemma}\label{lem:existence-anciaux-segments}
Let $n\ge2$ and $\varepsilon\in\{-1,0,1\}$.  For every
$$
        0<E<\ee^{-\varepsilon/2},
$$
there is an arclength-parametrized self-similar curve
$$
        \gamma_{n,\varepsilon,E}:[0,L_{n,\varepsilon,E}]\longrightarrow\C^*
$$
satisfying
$$
        r'=\cos\Theta,
        \qquad
        \Theta'=\left(\varepsilon r-\frac nr\right)\sin\Theta,
$$
and
$$
        r(0)=r(L_{n,\varepsilon,E})=1,
        \qquad
        0<r(s)<1\quad(0<s<L_{n,\varepsilon,E}).
$$
The minimum radius is the unique $r_*\in(0,1)$ determined by
\begin{equation}\label{eq:anciaux-segment-minimum}
        r_*^n\ee^{-\varepsilon r_*^2/2}=E,
\end{equation}
and
\begin{equation}\label{eq:anciaux-segment-length}
        L_{n,\varepsilon,E}
        =2\int_{r_*}^{1}
        \frac{dr}{\sqrt{1-E^2\ee^{\varepsilon r^2}/r^{2n}}}<\infty.
\end{equation}
\end{lemma}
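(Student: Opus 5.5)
The plan is to construct the curve directly from the ODE by prescribing initial data on the unit circle, and then read off the radial behaviour from the first integral. This repeats the argument of Proposition~\ref{prop:radial-behavior}, now without an ambient immersion. Fix $0<E<\ee^{-\varepsilon/2}=h_{\varepsilon,n}(1)$. Choose $\Theta_0\in(\pi/2,\pi)$ with $\sin\Theta_0=\ee^{\varepsilon/2}E\in(0,1)$. Solve
\[
r'=\cos\Theta,\qquad
\Theta'=\Bigl(\varepsilon r-\frac nr\Bigr)\sin\Theta,\qquad
\phi'=\frac{\sin\Theta}{r}
\]
with $r(0)=1$, $\Theta(0)=\Theta_0$, $\phi(0)=0$, and set $\gamma=r\ee^{i\phi}$. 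Local existence and uniqueness hold on $r>0$ by \cite[Chapter~1]{CoddingtonLevinson1955}. The computation in Subsection~\ref{subsec:product-models} shows that $\gamma$ is arclength parametrized, since $\gamma'=\ee^{i(\phi+\Theta)}$. By Proposition~\ref{prop:anciaux-product}, the quantity $r^n\ee^{-\varepsilon r^2/2}\sin\Theta$ is conserved, so it equals $E$ along the solution.

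For the radial analysis, the conservation law gives $\sin\Theta=E/h_{\varepsilon,n}(r)$. Hence
\[
(r')^2=G(r):=1-\frac{E^2}{h_{\varepsilon,n}(r)^2},
\]
and $r\ge r_*$, where $r_*\in(0,1)$ is the unique root of $h_{\varepsilon,n}(r_*)=E$. This root exists and is unique by the strict monotonicity \eqref{eq:h-n-increasing} together with $0<E<h_{\varepsilon,n}(1)$; it is exactly \eqref{eq:anciaux-segment-minimum}. As long as $r\le1$, $r$ stays bounded away from $0$, so the solution persists. I then copy Lemma~\ref{lem:phase-monotonicity} and Steps~2--3 of Proposition~\ref{prop:radial-behavior}:

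1. Since $\sin\Theta>0$ and $\varepsilon r-n/r<0$ on $(0,1]$, $\Theta$ is strictly decreasing.
2. Initially $r'(0)=\cos\Theta_0<0$, so $r$ decreases.
3. The derivative $r'$ can vanish only at $r=r_*$, where $\Theta=\pi/2$ and $r''=n/r_*-\varepsilon r_*>0$. This minimum is reached in the finite time $T_*=\int_{r_*}^1 G^{-1/2}\,dr$. The integral converges because $G(r_*)=0$ and $G'(r_*)>0$, so $G^{-1/2}=O((r-r_*)^{-1/2})$.
4. After the minimum, $\Theta<\pi/2$ and $\Theta$ cannot return to $\pi/2$, so $r'=\sqrt{G(r)}>0$.
5. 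The return to $r=1$ therefore takes time $T_*$ again. Setting $L=2T_*$ yields \eqref{eq:anciaux-segment-length}, with $r(0)=r(L)=1$ and $r<1$ in between.

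The step that needs most care is the continuation argument. I must make sure the solution does not blow up or leave $r\le1$ prematurely. Until time $L$, $r\in[r_*,1]$ is bounded away from $0$, where the right-hand side is smooth and bounded, so the maximal solution extends past $L$. The monotonicity of $r$ on each branch, combined with $r'=\mp\sqrt{G}$, turns the time-to-radius correspondence into the stated integral via the substitution $ds=dr/|r'|$. A second subtle point is at $r=r_*$. There $G^{-1/2}$ is singular, so the time formula is justified as an improper integral, while the ODE itself, in the variables $(r,\Theta)$, is regular there. This is why I integrate the $(r,\Theta)$ system rather than the scalar equation.
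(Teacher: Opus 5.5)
Your argument is correct, but it takes a different route from the paper's proof of this lemma.

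\textbf{Your route.} You prescribe boundary data $r(0)=1$ and $\Theta(0)=\Theta_0\in(\pi/2,\pi)$ with $\sin\Theta_0=\ee^{\varepsilon/2}E$. You then integrate forward through the turning point, in effect rerunning Lemma~\ref{lem:phase-monotonicity} and Steps~2--3 of Proposition~\ref{prop:radial-behavior} for a bare ODE solution.

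\textbf{The paper's route.} The paper starts at the turning point itself, with $r_+(0)=r_*$ and $\Theta_+(0)=\pi/2$. It follows only the increasing branch up to $r=1$, reached at time $T$. It then obtains the descending half from the reflection $(s,\Theta)\mapsto(T-s,\pi-\Theta)$, which maps solutions of the autonomous system to solutions, and glues the two halves at $s=T$.

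\textbf{Comparison.} The paper's construction gets the descending branch and the symmetry $r(L-s)=r(s)$, $\Theta(L-s)=\pi-\Theta(s)$ for free. It needs the finite-time separation-of-variables argument on one branch only, and that branch starts where $\Theta$ is already known.

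Your construction must also show that the descending orbit actually reaches $r_*$ rather than approaching it asymptotically, and your continuation remark does this. While $\Theta>\pi/2$, the elapsed time is $\int_{r(s)}^1G^{-1/2}\,dr<T_*$. Meanwhile the solution cannot terminate while $(r,\Theta)$ stays in the compact set $[r_*,1]\times[0,\pi]$. In your version the symmetry appears only through equal travel times on the two branches. Note also that $r\ge r_*$ follows from $h_{\varepsilon,n}(r)\ge E$ only because $r\le1$ on $[0,L]$, where $h_{\varepsilon,n}$ is increasing.

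In exchange, your version produces the segment directly from its boundary angle data. It also makes plain that the existence lemma and the rigidity analysis of Section~\ref{sec:global-classification} are the same dynamics. So every boundary-initialized solution with $0<E<\ee^{-\varepsilon/2}$ is exactly this segment.
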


\begin{proof}

The function
$$
        h_{\varepsilon,n}(r)=r^n\ee^{-\varepsilon r^2/2}
$$
is strictly increasing on $(0,1]$, because
$$
        h_{\varepsilon,n}'(r)
        =r^{n-1}\ee^{-\varepsilon r^2/2}(n-\varepsilon r^2)>0
$$
for $n\ge2$ and $\varepsilon\in\{-1,0,1\}$.
Hence there is a unique $r_*\in(0,1)$ satisfying \eqref{eq:anciaux-segment-minimum}.

By the classical ODE existence and uniqueness theorem
\cite[Chapter~1]{CoddingtonLevinson1955}, the self-similar system has a unique local solution with
initial data
$$
        r_+(0)=r_*,
        \qquad
        \Theta_+(0)=\frac\pi2.
$$
Its first integral is
$$
        r_+^n\ee^{-\varepsilon r_+^2/2}\sin\Theta_+=E.
$$
Furthermore,
$$
        r_+''(0)
        =-\sin\Theta_+(0)\Theta_+'(0)
        =\frac n{r_*}-\varepsilon r_*>0,
$$
so $r_*$ is a strict local minimum.  Since $E>0$,
$\sin\Theta_+>0$, while
$\varepsilon r-n/r<0$ on $0<r\le1$; hence $\Theta_+$ is strictly decreasing.
Immediately after the turning point one has $0<\Theta_+<\pi/2$, so
$r_+'>0$ and the solution follows the increasing branch
\begin{equation}
        (r_+')^2
        =1-\frac{E^2\ee^{\varepsilon r_+^2}}{r_+^{2n}}.
\end{equation}
The right-hand side is positive on $(r_*,1]$ and has a simple zero at $r_*$.
Thus $r_+$ cannot turn before reaching $1$, and separation of variables shows
that it reaches $r=1$ in the finite time
$$
        T_{n,\varepsilon,E}=\int_{r_*}^{1}
        \frac{dr}{\sqrt{1-E^2\ee^{\varepsilon r^2}/r^{2n}}}.
$$

Define a solution on $[0,2T_{n,\varepsilon,E}]$ by
$$
(r,\Theta)(s)=
\begin{cases}
\bigl(r_+(T_{n,\varepsilon,E}-s),\,\pi-\Theta_+(T_{n,\varepsilon,E}-s)\bigr),&0\le s\le T_{n,\varepsilon,E},\\
\bigl(r_+(s-T_{n,\varepsilon,E}),\,\Theta_+(s-T_{n,\varepsilon,E})\bigr),&T_{n,\varepsilon,E}\le s\le2T_{n,\varepsilon,E}.
\end{cases}
$$
The two definitions agree smoothly at $s=T_{n,\varepsilon,E}$, and direct substitution verifies the self-similar system on both halves.  Thus $L_{n,\varepsilon,E}=2T_{n,\varepsilon,E}$, the endpoints have radius one, and the radius is strictly smaller than one in the interior.

Finally, define $\phi$ by
$$
        \phi'=\frac{\sin\Theta}{r},
        \qquad
        \phi(0)=0,
$$
and put $\gamma_{n,\varepsilon,E}=r\ee^{i\phi}$.  Then
$$
        \gamma_{n,\varepsilon,E}'
        =\ee^{i\phi}(r'+ir\phi')
        =\ee^{i\phi}(\cos\Theta+i\sin\Theta),
$$
so $|\gamma_{n,\varepsilon,E}'|=1$.  This gives the required arclength-parametrized curve and proves \eqref{eq:anciaux-segment-length}.
\end{proof}
\begin{corollary}\label{cor:general-link-realization}
Under the assumptions of Corollary~\ref{cor:iterated-calabi-suspension}, assume in addition that $N^{k_0-1}$ is connected.  Let $n\ge k_0$, $\varepsilon\in\{-1,0,1\}$, and $0<E<\ee^{-\varepsilon/2}$.  Let $\gamma_{n,\varepsilon,E}$ be the unit-ball self-similar segment supplied by Lemma~\ref{lem:existence-anciaux-segments}.  Then
$$
        X_{n,\varepsilon,E}(s,p)=\gamma_{n,\varepsilon,E}(s)\psi_n(p)
$$
defines a compact immersed Lagrangian solution of
$H+\varepsilon X^\perp=0$,
$$
        X_{n,\varepsilon,E}:
        [0,L_{n,\varepsilon,E}]\times\mathbb T^{n-k_0}\times N^{k_0-1}
        \longrightarrow\overline{\B}^{2n}.
$$
It has exactly two boundary components, both diffeomorphic to $\mathbb T^{n-k_0}\times N^{k_0-1}$, along which  $X_{n,\varepsilon,E}$ satisfies the Legendrian capillary boundary condition with supplementary constant contact angles.
\end{corollary}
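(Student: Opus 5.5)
The proof assembles results already in hand. By Corollary~\ref{cor:iterated-calabi-suspension}, $\psi_n:N^{n-1}\cong\mathbb T^{n-k_0}\times N^{k_0-1}\to\Sph^{2n-1}$ is a compact minimal Legendrian immersion. Its domain is connected, because $N^{k_0-1}$ is connected and every Calabi step adds a circle factor. By Lemma~\ref{lem:existence-anciaux-segments}, for $0<E<\ee^{-\varepsilon/2}$ the curve $\gamma_{n,\varepsilon,E}$ is arclength-parametrized, solves the profile system \eqref{eq:anciaux-system}, and satisfies $r(0)=r(L)=1$ and $0<r<1$ on $(0,L)$.

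First, Proposition~\ref{prop:anciaux-product}, applied with link $\psi_n$ and profile $\gamma_{n,\varepsilon,E}$, shows three things. The map $X_{n,\varepsilon,E}$ is Lagrangian. Its metric is $ds^2+r^2g_{\psi_n}$. It solves $H+\varepsilon X^\perp=0$. The metric formula gives the immersion property, since $r\ge r_*>0$ and $\psi_n$ is an immersion. The domain $[0,L]\times N^{n-1}$ is compact. It has exactly the two boundary components $\{0\}\times N^{n-1}$ and $\{L\}\times N^{n-1}$, and connectedness of $N^{n-1}$ is what makes each of these a single component.

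Second, $|X|=r(s)$, so $X$ maps into $\overline{\B}^{2n}$, the interior into $\B^{2n}$, and the boundary into $\Sph^{2n-1}$. We then apply Proposition~\ref{prop:product-boundary-angles} with $s_0=0$, $s_1=L$ and connected $N=N^{n-1}$. This yields that the boundary immersions $p\mapsto \ee^{i\phi(s_j)}\psi_n(p)$ are Legendrian. It also gives constant contact angles determined by \eqref{eq:product-angle-components}, and these angles satisfy $\theta_0+\theta_1=\pi$. The angles are constant along each component because $\Theta(s_j)$ does not depend on $p$.

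None of these steps is a real obstacle. The one point to check is that Proposition~\ref{prop:product-boundary-angles} needs exactly the strict inequality $0<r<1$ on the open interval, and Lemma~\ref{lem:existence-anciaux-segments} supplies this. The connectedness hypothesis is the only extra input, and it is used solely to count the boundary components as exactly two.
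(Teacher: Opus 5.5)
Your proposal is correct and follows essentially the same route as the paper. Both arguments combine Corollary~\ref{cor:iterated-calabi-suspension}, Lemma~\ref{lem:existence-anciaux-segments}, Proposition~\ref{prop:anciaux-product} and Proposition~\ref{prop:product-boundary-angles}, and use the connectedness of $N^{k_0-1}$ only to count exactly two boundary components. Your observation that the metric $ds^2+r^2g_{\psi_n}$ with $r\ge r_*>0$ gives the immersion property makes explicit a point the paper leaves implicit.
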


\begin{proof}
The iterated map $\psi_n$ is compact minimal Legendrian by Corollary~\ref{cor:iterated-calabi-suspension}.  Since $N^{k_0-1}$ is connected, so is
$$
        N^{n-1}\cong\mathbb T^{n-k_0}\times N^{k_0-1}.
$$
Hence the product parameter manifold is connected and has exactly the two boundary components corresponding to the endpoints of the profile interval.  Proposition~\ref{prop:anciaux-product} proves that $X_{n,\varepsilon,E}$ satisfies $H+\varepsilon X^\perp=0$.  The radial properties of the profile place its image in $\overline{\B}^{2n}$ and its interior in $\B^{2n}$.  Proposition~\ref{prop:product-boundary-angles} gives the constant-angle Legendrian capillary boundary condition, and \eqref{eq:product-supplementary} gives supplementary contact angles.
\end{proof}

\begin{proof}[Proof of Theorem~\ref{thm:intro-calabi-realization}]
Proposition~\ref{prop:calabi-suspension} and
Corollary~\ref{cor:iterated-calabi-suspension} produce the asserted compact
minimal Legendrian immersion $\psi_n$ for every $n\ge k_0$.  Fix such an $n$
and $\varepsilon\in\{-1,0,1\}$.  For every
$0<E<\ee^{-\varepsilon/2}$,
Lemma~\ref{lem:existence-anciaux-segments} supplies an
arclength-parametrized unit-ball profile, and
Corollary~\ref{cor:general-link-realization} pairs it with $\psi_n$ to produce
a compact immersed solution with two supplementary Legendrian capillary
boundary components.  As $E$ varies, these profiles give the claimed
one-parameter family.
\end{proof}

\noindent{\bf Acknowledgments}
This work was supported by the National Natural Science Foundation of China (Grant Nos. 12271069, 12201138, 12401057, 12471048, W2521103), the Natural Science Foundation of Henan Province (Grant No. 262300421869) and the Beijing Natural Science Foundation (Grant No. 1244039).



\begingroup
\small
\raggedright
\authorinfo
  {Dong Gao}
  {School of Science, Beijing University of Civil Engineering and Architecture,
   Beijing 102616, P.R. China}
  {gaodong@bucea.edu.cn}
\authorinfo
  {Yong Luo}
  {Mathematical Science Research Center of Mathematics,
   Chongqing University of Technology, Chongqing 400054, P.R. China}
  {yongluo-math@cqut.edu.cn}
\authorinfo
  {Hui Ma}
  {Department of Mathematical Sciences, Tsinghua University,
   Beijing 100084, P.R. China}
  {ma-h@tsinghua.edu.cn}
\authorinfo
  {Jiabin Yin}
  {School of Mathematics and Statistics, Xinyang Normal University,
   Xinyang 464000, P.R. China}
  {jiabinyin@126.com}
\endgroup

\end{document}